\newtheorem{theorem}{Theorem}[section]
\newtheorem{lemma}[theorem]{Lemma}
\newtheorem{corollary}[theorem]{Corollary}
\newtheorem{proposition}[theorem]{Proposition}
\theoremstyle{definition}
\newtheorem*{acknowledgement}{Acnowledgement}
\numberwithin{equation}{section}
\journal{Journal of Functional Analysis}
\begin{document}

\begin{frontmatter}




\title{Extended eigenvalues for  Ces\`aro operators}
\author{Miguel Lacruz\corref{miguel}}
\address{Departamento de An\'alisis Matem\'atico, Facultad de Matem\'aticas, Universidad de Sevilla, Avenida Reina Mercedes s/n, 41012 Seville (Spain)}
\ead{lacruz@us.es}
\author{Fernando Le\'on-Saavedra\corref{}}
\address{Departamento de Matem\'aticas, Universidad de C\'adiz, Avda. de la Universidad s/n, 11405 \indent Jerez de la  Frontera, C\'adiz (Spain)}
\ead{fernando.leon@uca.es}
\author{Srdjan Petrovic\corref{}}
\address{Department of Mathematics, Western Michigan University, Kalamazoo, MI 49008 (USA)}
\ead{srdjan.petrovic@wmich.edu}
\author{Omid Zabeti\corref{}}
\address{Department of Mathematics, Faculty of Mathematics, University of Sistan and Baluchestan,  \indent P.O. Box 98135-674, Zahedan (Iran)}
\ead{o.zabeti@gmail.com}
 \cortext[miguel]{Corresponding author}

\begin{abstract}
A complex scalar \(\lambda\) is said to be an {\em extended eigenvalue} of   a bounded linear operator   \(T\)  on a complex Banach space if there is a nonzero operator \(X\) such that \(TX= \lambda XT.\) Such an operator \(X\) is called an {\em extended eigenoperator} of \(T\) corresponding to  the extended eigenvalue~\(\lambda.\) 

The purpose of this paper is  to give a description of the extended eigenvalues  for the  discrete Ces\`aro operator \(C_0,\) the finite continuous   Ces\`aro operator \(C_1\)  and the infinite continuous  Ces\`aro operator \(C_\infty\)  defined on the complex Banach spaces \(\ell^p,\) \(L^p[0,1]\) and \(L^p[0,\infty)\) for \(1 < p <\infty\) by the expressions  
\begin{align*}
(C_0f)(n) \colon & = \frac{1}{n+1} \sum_{k=0}^n f(k),\\
(C_1f)(x) \colon & = \frac{1}{x} \int_0^x f(t)\,dt,\\
(C_\infty f)(x) \colon & = \frac{1}{x} \int_0^x f(t)\,dt.
\end{align*}
It is shown that the set of extended eigenvalues for \(C_0\) is the interval \([1,\infty),\) for \(C_1\) it is the interval \((0,1],\)  and  for \(C_\infty\) it reduces to the singleton~\(\{1\}.\) 
\end{abstract}

\begin{keyword}
Extended eigenvalue \sep Extended eigenoperator   \sep  Ces\`aro operator \sep Shift operator \sep  Euler operator  \sep  Hausdorff operator \sep Rich point spectrum \sep  Bilateral weighted shift \sep Analytic Toeplitz operator \sep Analytic  kernel.


\MSC 47B37 \sep 47B38 \sep 47A10 \sep 47A62

\end{keyword}

\end{frontmatter}




\newpage

\def\l@section{\@dottedtocline{1}{1em}{2em}}

\tableofcontents

\section{Introduction}
We shall represent by \({\mathcal B}(E)\)  the algebra of all bounded linear operators  on a complex Banach space \(E.\)  A complex scalar \(\lambda\) is  said to be an {\em extended eigenvalue} of an operator \(T \in {\mathcal B}(E)\) provided that there is a nonzero operator \(X \in {\mathcal B}(E)\) such that \(TX= \lambda XT,\) and in that case \(X\) called an {\em extended eigenoperator} of \(T\) corresponding to  the extended eigenvalue \(\lambda.\) We shall represent by \(\{T\}^\prime\) the commutant of an operator~\(T,\) i.e., the set of operators that commute with~\(T,\) or in other words, the family of all the extended eigenoperators for \(T\) corresponding to the extended eigenvalue \(\lambda=1.\)

Recently, the study of the extended eigenvalues   for some classes of operators has received a considerable amount of attention \cite{BLP,BP,BS,Lam,Lau,P,P2, S}. 

The purpose of this paper is  to describe the set of the extended eigenvalues  for the  discrete Ces\`aro operator \(C_0,\) the finite continuous   Ces\`aro operator \(C_1,\)  and the infinite continuous  Ces\`aro operator \(C_\infty\)  defined on the complex Banach spaces \(\ell^p,\) \(L^p[0,1]\) and \(L^p[0,\infty)\) for \(1 < p <\infty\) by the expressions

\begin{eqnarray}
\label{cesdisc}
(C_0f)(n) \colon = \frac{1}{n+1} \sum_{k=0}^n f(k),
\end{eqnarray}
\begin{eqnarray}
\label{cesfin}
(C_1f)(x) \colon = \frac{1}{x} \int_0^x f(t)\,dt,
\end{eqnarray}
\begin{eqnarray}
\label{cesinf}
(C_\infty f)(x) \colon = \frac{1}{x} \int_0^x f(t)\,dt.
\end{eqnarray}

It is shown that the set of extended eigenvalues for \(C_0\) is the interval \([1,\infty),\) for \(C_1\) is the interval \((0,1],\)  and  for \(C_\infty\) is the singleton \(\{1\}.\) The notion of an operator with rich point spectrum is introduced and it is  shown   that  the geometry of the point spectrum  for such an operator determines its extended eigenvalues. Then, it is shown that both \(C_1\) and \(C_0^\ast\) have rich point spectrum. Further, it is shown  that a bilateral weighted shift whose point spectrum has non empty interior and the adjoint of an analytic Toeplitz operator with non constant symbol are further examples of operators with rich point spectrum. Then,  this result is applied to obtain information on the extended eigenvalues of those operators. Finally,  a factorization is provided for the extended eigenoperators   of a Hilbert space operator under certain conditions.

The paper is organized as follows.

In section \ref{finite} we show that  every \(\lambda \in (0,1]\)  is an  extended eigenvalue for \(C_1\) on  \(L^2[0,1]\) and    the Euler operator is a corresponding extended eigenoperator. Moreover, any extended eigenoperator for \(C_1\) on   \(L^2[0,1]\)  factors as the product of the Euler operator, a Toeplitz matrix, and a power of a backward unilateral  shift of multiplicity one.

In section~\ref{sect:rich} we introduce the notion of an operator with  rich point spectrum. We show that if \(\lambda\) is an extended eigenvalue of  an operator \(T\) with rich point spectrum  then \(\lambda\)  multiplies \({\rm int}\, \sigma_p(T),\) the interior of the point spectrum of \(T,\) into \({\rm clos}\, \sigma_p(T),\) the closure of the point spectrum of \(T.\)  We  show that  both \(C_1\) and \(C_0^\ast\) have rich point spectrum and we apply this geometric  result to prove that for every \(1<p<\infty\) we have
\begin{enumerate}
\item if \(\lambda\) is an extended eigenvalue for \(C_1\) on \(L^p[0,1]\)  then  \(0 < \lambda \leq 1,\) 
\item  if \(\lambda\) is an extended eigenvalue for \(C_0\) on \(\ell^p\)  then  \(\lambda \geq 1.\)
\end{enumerate}

In section \ref{cesp} we show that  every \(\lambda \in (0,1]\) is an extended eigenvalue for \(C_1\) on \(L^p[0,1]\) and that  a certain weighted composition operator is a corresponding extended eigenoperator.

In section \ref{sect:disc} we show  when \(p=2\) that if \(\lambda\) is  real  with \(\lambda \geq 1\) then \(\lambda\) is an extended eigenvalue for \(C_0.\) 
 
In section \ref{sec:bilateral} we show that if the point spectrum of a bilateral weighted  shift \(W\) has non empty interior  then \(W\) has rich point spectrum, and as a consequence, the set of the extended eigenvalues for \(W\) is the unit circle.

In section \ref{toeplitz} we show that a result of Deddens \cite{D} about  extended eigenvalues of an analytic Toeplitz operators can be regarded as a special case of our main result in section \ref{sect:rich}.

In section \ref{sec:factor} we show under certain conditions that  if  \(\lambda\) is an extended eigenvalue for an operator \(T\) on  a Hilbert space then there is  a particular extended eigenoperator \(X_0\) corresponding to    \(\lambda\) such that every extended eigenoperator \(X\) corresponding to \(\lambda\) factors as  \(X=X_0R\) for some  \(R \in \{T\}^\prime.\)

In section \ref{infinite} we show that the family of  the extended eigenvalues for  \(C_\infty\) on the complex Hilbert space \(L^2[0,\infty)\) reduces to the singleton \(\{1\}.\) 

In section \ref{sec:cinftyp} we show that the family of the extended eigenvalues for  \(C_\infty\) on the complex Banach space \(L^p[0,\infty),\) for \(1<p<\infty,\)  reduces to the singleton~\(\{1\}.\)

 
\section{The finite continuous Ces\`aro operator on Hilbert space}
\label{finite}

Brown, Halmos and Shields \cite{BHS} proved in the Hilbertian case that \(C_1\) is indeed a bounded linear operator, and they also proved that \(I-C_1^\ast\) is unitarily equivalent to a unilateral  shift of multiplicity one.  

Recall that a bounded linear operator \(S\) on a complex Hilbert space \(H\)  is a {\em unilateral  shift of multiplicity one} provided that  there is an orthonormal basis \((e_n)\) of \(H\)  such that   \(Se_n=e_{n+1}\) for all \(n \in \mathbb N.\)  It is easy to see that the adjoint of a such a unilateral shift satisfies \(S^\ast e_0=0\) and \(S^\ast e_n = e_{n-1}\) for all \(n \geq 1.\)

Consider a unilateral shift of multiplicity one \(S \in {\mathcal B}(L^2[0,1])\) and  a unitary operator  \(U \in {\mathcal B}(L^2[0,1])\)  such that \(I-C_1^\ast=U^\ast SU.\) We have \(C_1=U^\ast (I-S^\ast)U,\) and since the extended eigenvalues are preserved under similarity in general, and under unitary equivalence in particular, it follows that the extended eigenvalues of \(C_1\) are precisely the extended eigenvalues of \(I-S^\ast,\) and the extended eigenoperators of \(C_1\) are in one to one correspondence with   the extended eigenoperators of \(I-S^\ast\)  under conjugation with  \(U.\)

We shall use repeatedly the following elementary, standard fact.

\begin{lemma}
\label{vito}
The point spectrum of \(S^\ast\) is the open  unit disc \(\mathbb D.\) More precisely, every \(\lambda \in \mathbb D\)  is a simple eigenvalue of \(S^\ast,\) and a corresponding eigenvector \(f\)  is given by the expression 
\begin{eqnarray}
f = \sum_{n=0}^\infty  \lambda^n e_n.
\end{eqnarray}
\end{lemma}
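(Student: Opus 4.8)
The plan is to work directly with the coordinate expansion of a candidate eigenvector in the orthonormal basis \((e_n)\). First I would suppose that \(\lambda\) is an eigenvalue of \(S^\ast\) with eigenvector \(f = \sum_{n=0}^\infty a_n e_n \neq 0,\) where \(\sum_{n=0}^\infty |a_n|^2 < \infty.\) Applying the relations \(S^\ast e_0 = 0\) and \(S^\ast e_n = e_{n-1}\) for \(n \geq 1\) gives \(S^\ast f = \sum_{n=0}^\infty a_{n+1} e_n,\) so that the equation \(S^\ast f = \lambda f\) becomes, upon comparing coefficients, the recurrence \(a_{n+1} = \lambda a_n\) for every \(n \geq 0.\)

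Next I would solve this recurrence to obtain \(a_n = \lambda^n a_0\) for all \(n,\) whence \(f = a_0 \sum_{n=0}^\infty \lambda^n e_n.\) Since \(f \neq 0\) forces \(a_0 \neq 0,\) the membership condition \(\sum_{n=0}^\infty |a_n|^2 = |a_0|^2 \sum_{n=0}^\infty |\lambda|^{2n} < \infty\) holds precisely when \(|\lambda| < 1.\) This simultaneously rules out any scalar with \(|\lambda| \geq 1\) as an eigenvalue and shows that, for \(\lambda \in \mathbb D,\) the eigenspace is spanned by the single vector displayed in the statement, so that the eigenvalue is simple.

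Finally I would check the converse: for any \(\lambda \in \mathbb D\) the series \(f = \sum_{n=0}^\infty \lambda^n e_n\) converges in \(H\) because \(\sum_{n=0}^\infty |\lambda|^{2n} = (1 - |\lambda|^2)^{-1} < \infty,\) and a term-by-term application of \(S^\ast\) yields \(S^\ast f = \sum_{n=0}^\infty \lambda^{n+1} e_n = \lambda f,\) confirming that \(f\) is a genuine eigenvector.

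There is no substantial obstacle here, since the argument is entirely elementary. The only point that requires any attention is the square-summability of the coefficients, which is exactly what confines the point spectrum of \(S^\ast\) to the open unit disc rather than to its closure.
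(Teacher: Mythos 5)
Your proof is correct and complete: it is the standard coefficient-comparison argument, solving the recurrence \(a_{n+1}=\lambda a_n\) and using square-summability to confine the eigenvalues to \(\mathbb D\) and to show each eigenspace is one-dimensional. Note that the paper itself offers no proof of this lemma at all — it is stated there as an ``elementary, standard fact'' — so your argument simply supplies the routine verification the paper leaves implicit; the only step deserving a word of justification is the term-by-term application of \(S^\ast\) to the series, which is legitimate because \(S^\ast\) is bounded and the series converges in norm.
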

\noindent
Now we are ready to describe the set of the extended eigenvalues for \(I-S^\ast.\) Our  first goal is  to show that the   interval \((0,1]\) is contained in the set of   the extended eigenvalues for \(I-S^\ast,\) and to exhibit a  corresponding extended eigenoperator. We shall prove that a particular  extended eigenoperator is the  Euler operator. 

It is convenient now  to  have a digression   about the Euler operator and the discrete Ces\`aro operator.  We   follow the discussion in the paper of Rhoades ~\cite{R}. Recall that the discrete Ces\`aro operator \(C_0\) is defined on  \(\ell^2\) by the sequence of arithmetic means (\ref{cesdisc}).

Let \(\lambda \in \mathbb C.\)  The {\em Euler operator} \(E_\lambda\) is defined on \(\ell^2\) by the  binomial means
\begin{eqnarray}
\label{euler}
(E_\lambda f)(n)    =  \sum_{k=0}^n \binom{n}{k} \lambda^k (1-\lambda)^{n-k} f(k), \quad n \in \mathbb N.
\end{eqnarray}
Let \((\mu_k)\) be a sequence of complex scalars and let \(\Delta\) denote the  {\em forward difference operator} defined by 
\begin{eqnarray}
\Delta \mu_k=\mu_k - \mu_{k+1}.
\end{eqnarray}
A {\em Hausdorff matrix} is an infinite  matrix \(A=(a_{nk})\) 
whose entries are given by the expression
\begin{eqnarray}
a_{nk} = 	\left \{
		\begin{array}{rl}
		\displaystyle{\binom{n}{k} \Delta^{n-k} \mu_k} & \text{if } 0 \leq k \leq n,\\
		0 & \text{if }k>n.
		\end{array}
		\right .
\end{eqnarray}
The sequence \((\mu_k)\) is called the {\em generating sequence} for the Hausdorff matrix \(A\) and it is determined by the diagonal entries of \(A.\) 
The  {\em Hausdorff operator} associated with a Hausdorff matrix \(A=(a_{nk})\) is  defined by the expression
\begin{eqnarray}
(Af)(n)= \sum_{k=0}^na_{nk}f(k).
\end{eqnarray}
The discrete Ces\`aro operator \(C_0,\) with generating sequence  \(\mu_n=(n+1)^{-1} ,\) and  the Euler operator \(E_\lambda,\)  with generating sequence \(\mu_n=\lambda^n,\) are  two examples of Hausdorff operators. Rhoades \cite{R} notes that  \(E_\lambda \) is bounded for \(1/2 < \lambda \leq 1.\) We show in Proposition \ref{bdd1} below that \(E_\lambda\) is  bounded also for \(0 < \lambda \leq 1/2.\) 

There is a strong connection between Hausdorff operators and the discrete Ces\`aro operator. Hurwitz and Silvermann \cite{HS} showed that the commutant of \(C_0\) is precisely the set of all Hausdorff operators, whereas Shields and Wallen~\cite{SW} showed that the commutant of \(C_0\) is the weakly closed algebra with identity generated by \(C_0.\)

\begin{proposition}
\label{perro}
If \(0 < \lambda \leq 1 \) then \(\lambda\) is an extended eigenvalue for   \(I-S^\ast,\)  and  moreover,   the Euler operator \(E_\lambda\) is a corresponding extended eigenoperator.
\end{proposition}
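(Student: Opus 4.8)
The plan is to establish the operator identity $(I-S^\ast)E_\lambda=\lambda\,E_\lambda(I-S^\ast)$ by testing both sides on the eigenvectors of $S^\ast$ furnished by Lemma~\ref{vito}. Before doing so I would record two preliminary facts. First, $E_\lambda$ is a bounded operator for every $\lambda\in(0,1]$: for $1/2<\lambda\le1$ this is the observation of Rhoades, and for $0<\lambda\le1/2$ it is Proposition~\ref{bdd1}. Second, I read $E_\lambda$ as an operator on the underlying space through the orthonormal basis $(e_n)$, so that $I-S^\ast$ and $E_\lambda$ act on the same space; since $E_\lambda e_0=e_0$, the operator $E_\lambda$ is nonzero, and it only remains to prove the intertwining relation.

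The crux is a single computation. For $\mu\in\mathbb D$ let $f_\mu=\sum_{n=0}^\infty\mu^n e_n$ denote the eigenvector of $S^\ast$ with eigenvalue $\mu$ given by Lemma~\ref{vito}, which lies in the space because $|\mu|<1$. Applying the defining formula (\ref{euler}) together with the binomial theorem yields
\[
(E_\lambda f_\mu)(n)=\sum_{k=0}^n\binom{n}{k}(\lambda\mu)^k(1-\lambda)^{n-k}=\bigl(1-\lambda+\lambda\mu\bigr)^n,
\]
so that $E_\lambda f_\mu=f_\nu$, where $\nu=1-\lambda(1-\mu)=(1-\lambda)\cdot1+\lambda\mu$. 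As $\nu$ is a convex combination of the boundary point $1$ and the interior point $\mu$, one has $|\nu|\le(1-\lambda)+\lambda|\mu|<1$ whenever $\lambda>0$, so $f_\nu$ again belongs to the space and the identity is consistent.

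Granting this, the verification is immediate. Using $(I-S^\ast)f_\mu=(1-\mu)f_\mu$ and $1-\nu=\lambda(1-\mu)$, I obtain
\[
(I-S^\ast)E_\lambda f_\mu=(I-S^\ast)f_\nu=(1-\nu)f_\nu=\lambda(1-\mu)E_\lambda f_\mu=\lambda\,E_\lambda(I-S^\ast)f_\mu,
\]
so the two bounded operators $(I-S^\ast)E_\lambda$ and $\lambda\,E_\lambda(I-S^\ast)$ agree on each $f_\mu$. To conclude I would invoke the totality of $\{f_\mu:\mu\in\mathbb D\}$: if $g$ is orthogonal to every $f_\mu$, then the analytic function $\mu\mapsto\langle f_\mu,g\rangle=\sum_{n}\mu^n\overline{g(n)}$ vanishes on $\mathbb D$, whence $g=0$. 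Thus the two operators coincide on a dense subspace and therefore everywhere.

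I do not anticipate a serious obstacle; the only delicate points are the boundedness of $E_\lambda$ (already available) and the passage from eigenvectors to a global identity, which the density argument handles. If one prefers to avoid Lemma~\ref{vito}, the same conclusion follows by a purely entrywise check: comparing the $(n,k)$ entries of $(I-S^\ast)E_\lambda$ and $\lambda E_\lambda(I-S^\ast)$ reduces the claim to $\binom{n}{k}-(1-\lambda)\binom{n+1}{k}=\lambda\binom{n}{k}-(1-\lambda)\binom{n}{k-1}$, which is an instance of Pascal's rule.
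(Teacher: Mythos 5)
Your proof is correct, but it takes a genuinely different route from the paper's. The paper verifies the relation $(I-S^\ast)E_\lambda e_k=\lambda E_\lambda(I-S^\ast)e_k$ directly on the standard basis vectors $e_k$, manipulating binomial coefficients via Pascal's identity --- essentially the entrywise check you sketch in your last sentence. Your main argument instead diagonalizes: you show that $E_\lambda$ maps the eigenvector $f_\mu$ of $S^\ast$ to the eigenvector $f_\nu$ with $\nu=1-\lambda+\lambda\mu$, so the intertwining relation drops out of the identity $1-\nu=\lambda(1-\mu)$, and you then conclude from totality of $\{f_\mu\colon\mu\in\mathbb D\}$ together with boundedness of both sides. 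This is cleaner and more conceptual: it exhibits $E_\lambda$ as implementing the affine self-map $\mu\mapsto 1-\lambda+\lambda\mu$ of the disc on the eigenvector field of $S^\ast$, which is precisely the mechanism the paper exploits later for $I-M_z$ (Theorem \ref{thm:comp}, where the composition symbol $z\mapsto(\lambda-1)/\lambda+z/\lambda$ is the inverse of your affine map) and for the bilateral shift ($X_0h(z)=h(\lambda z)$ in Section \ref{sec:factor}); the paper's computation at this point hides that geometry. Both arguments need $E_\lambda$ bounded (Rhoades for $1/2<\lambda\le 1$, Proposition \ref{bdd1} for $0<\lambda\le 1/2$) in order to pass from a total set to an operator identity, and you cite this correctly. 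One slip worth fixing: $E_\lambda e_0$ is not $e_0$; rather $E_\lambda e_0=\sum_{n\ge 0}(1-\lambda)^n e_n=f_{1-\lambda}$ (this is in fact the paper's opening computation). The slip is harmless, since $E_\lambda\neq 0$ already follows from $E_\lambda f_\mu=f_\nu\neq 0$, or simply from the $(0,0)$ matrix entry of $E_\lambda$ being equal to $1$.
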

\begin{proof} 
First of all,  for  \(k=0\) we have
\[
E_\lambda e_0 = \sum_{n=0}^\infty (1-\lambda)^n e_n.
\]
Then, it follows from Lemma \ref{vito} that  \((I-S^\ast) E_\lambda e_0= \lambda E_\lambda (I-S^\ast )e_0.\) Next, for \(k \geq 1\) we have
\begin{align*}
S^\ast  E_\lambda e_k & =    \sum_{n=k}^\infty \binom{n}{k} \lambda^k (1-\lambda)^{n-k} e_{n-1}\\
		    & =   \lambda^k  e_{k-1} + \sum_{n=k}^\infty \binom{n+1}{k} \lambda^k (1-\lambda)^{n+1-k} e_n,
\end{align*}
so that
\begin{align*}
(I-S^\ast)   E_\lambda e_k & =  -\lambda^ke_{k-1} + \sum_{n=k}^\infty \left [  \binom{n}{k} -\binom{n+1}{k}(1-\lambda) \right ] \lambda^k (1-\lambda)^{n-k} e_n.
\end{align*}
Using Pascal's identity \(\displaystyle{\binom{n+1}{k}=\binom{n}{k}+\binom{n}{k-1}}\) leads to
\begin{align*}
(I-S^\ast)  E_\lambda e_k & =  -\lambda^ke_{k-1} + \sum_{n=k}^\infty  \left [\binom{n}{k}  \lambda   -\binom{n}{k-1} (1-\lambda)  \right ]  \lambda^k (1-\lambda)^{n-k} e_n\\
			& = 	 -\lambda^ke_{k-1} + \sum_{n=k}^\infty \binom{n}{k} \lambda^{k+1} (1-\lambda)^{n-k} e_n \\
			& -  	\sum_{n=k}^\infty  \binom{n}{k-1} \lambda^k (1-\lambda)^{n-(k-1)} e_n\\
			& = 	  \lambda \sum_{n=k}^\infty \binom{n}{k} \lambda^k (1-\lambda)^{n-k} e_n  \\
			& -  	\lambda  \left [  \lambda^{k-1}e_{k-1} + \sum_{n=k}^\infty  \binom{n}{k-1} \lambda^{k-1} (1-\lambda)^{n-(k-1)} e_n \right ]\\
			& =  	\lambda \sum_{n=k}^\infty \binom{n}{k} \lambda^k (1-\lambda)^{n-k} e_n \\
			& -  	\lambda \sum_{n=k-1}^\infty  \binom{n}{k-1} \lambda^{k-1} (1-\lambda)^{n-(k-1)} e_n  \\
			& =  	\lambda ( E_\lambda e_k- E_\lambda e_{k-1}) \\
			& = \lambda E_\lambda (I-S^\ast)e_k,
\end{align*}
so that \((I-S^\ast) E_\lambda e_k = \lambda  E_\lambda (I-S^\ast)e_k\) for all \(k \in \mathbb N,\) as we wanted.
\end{proof}

Our next goal is to describe the collection  of  the extended eigenoperators for \(I-S^\ast\) corresponding to an extended eigenvalue \(\lambda \in (0,1].\)  It is convenient  to have a digression on Toeplitz operators. We shall  follow the discussion about Toeplitz operators in  the paper of Sheldon Axler \cite{A}.

Let \((\alpha_n)_{n \in \mathbb Z} \) be a two sided  sequence of complex scalars and consider the infinite matrix \(A=(a_{nk})\)  whose entries are given by the expression
\(a_{nk} = \alpha_{n-k}.\) We say that \(A\) is  the {\em Toeplitz matrix} associated with the sequence \((\alpha_n)_{n \in \mathbb Z}.\)
The {\em Toeplitz operator} associated with a Toeplitz matrix \(A=(a_{nk})\) is defined on the complex Hilbert space \(\ell^2\) by the expression
\begin{eqnarray}
(Af)(n)= \sum_{k =0}^\infty \alpha_{n-k}f(k).
\end{eqnarray}
Consider the unit circle \(\mathbb T = \{ z \in \mathbb C \colon |z|=1\}\) and define a  function \(\varphi \colon \mathbb T \to \mathbb C\)    by the Fourier expansion 
\begin{eqnarray}
\varphi(e^{i \theta}) = \sum_{n=-\infty}^\infty \alpha_n e^{in\theta}. 
\end{eqnarray}
It is a standard fact that a Toeplitz matrix \(A\) induces a bounded operator if and only if  \(\varphi\) is essentially bounded, and moreover,
\begin{eqnarray}
\label{sup}
\|A\|= \sup \{|\varphi(z) |\colon z \in \mathbb T\}.
\end{eqnarray}

Halmos says \cite[Problem 33] {H} that Fourier expansions are formally similar to Laurent expansions,  and  the analogy motivates calling the functions of \(H^2(\mathbb T)\) the {\em analytic } elements of \(L^2(\mathbb T).\) Thus, \(\varphi\) is analytic if and only if \(\alpha_n=0\) for all \(n<0.\) Also, \(\varphi\) is called {\em co-analytic} provided that \(\alpha_n =0\) for all \(n >0.\) 

It turns out that \(AS=SA\) if and only if \(A\) is an analytic Toeplitz operator, and that \(AS^\ast=S^\ast A\) if and only \(A\) is a co-analytic Toeplitz operator.

Lambert \cite{Lam} observed that  if \(X \in {\mathcal B}(H)\) is an extended eigenoperator for an operator \(T \in {\mathcal B}(H)\) associated with an extended eigenvalue \(\lambda \in \mathbb C,\) and if \(R \in \{T\}^\prime\)   then the product \(XR\) is also an extended eigenoperator for \(T\) associated with \(\lambda.\) 

Let \(A\) be a  co-analytic Toeplitz operator. Since \(A\) commutes with \(S^\ast\)  and since \((S^\ast)^{n_0}\)  commutes with \(S^\ast,\)  it follows that \(A(S^\ast)^{n_0}\)  commutes with \(I-S^\ast.\) Since \(E_\lambda\) is an extended eigenoperator for \(I-S^\ast\) associated with the extended eigenvalue \(\lambda,\) it follows from Lambert's observation that \(E_\lambda A (S^\ast)^{n_0}\) is also an extended eigenoperator for \(I-S^\ast\) associated with the extended eigenvalue \(\lambda.\) The following result shows that these are all possible extended eigenoperators for the operator \(I-S^\ast.\)
\begin{theorem}
\label{main}
If  \(0 < \lambda \leq 1\) and  \(X\) is an extended eigenoperator of \(I-S^\ast\) associated with \(\lambda\) then there is a two sided sequence \((\alpha_n)_{n \in \mathbb Z}\)  of complex scalars  with \(\alpha_0 \neq 0\) and \(\alpha_n=0\) for all \(n \geq 1,\) and there is an  \(n_0 \in \mathbb N\) such that  \(X\) admits a factorization
\begin{eqnarray}
\label{factor}
X=E_\lambda A (S^\ast)^{n_0},
\end{eqnarray}
where \(E_\lambda\) is the Euler operator and where  \(A\) is the co-analytic Toeplitz matrix associated with  \((\alpha_n)_{n \in \mathbb Z}.\)
\end{theorem}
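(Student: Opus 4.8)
The plan is to test the intertwining relation \((I-S^\ast)X=\lambda X(I-S^\ast)\) against the eigenvectors of \(S^\ast\). Fix \(\mu\in\mathbb D\) and let \(f_\mu:=\sum_{n\ge 0}\mu^n e_n\) be the eigenvector furnished by Lemma \ref{vito}, so that \((I-S^\ast)f_\mu=(1-\mu)f_\mu\). Applying \(X\) and using the relation gives \((I-S^\ast)(Xf_\mu)=\lambda(1-\mu)(Xf_\mu)\), that is, \(S^\ast(Xf_\mu)=\psi(\mu)(Xf_\mu)\) with \(\psi(\mu):=1-\lambda(1-\mu)=(1-\lambda)+\lambda\mu\). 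The first thing I would check is that \(\psi\) maps \(\mathbb D\) into \(\mathbb D\): since \(0<\lambda\le 1\) we have \(1-\lambda\ge 0\), whence \(|\psi(\mu)|\le(1-\lambda)+\lambda|\mu|<1\), so \(\psi(\mu)\) always lies in the point spectrum of \(S^\ast\). Because every point of \(\mathbb D\) is a \emph{simple} eigenvalue of \(S^\ast\), the corresponding eigenspace is spanned by the single vector \(f_{\psi(\mu)}=\sum_n\psi(\mu)^n e_n\), and therefore \(Xf_\mu=c(\mu)f_{\psi(\mu)}\) for a uniquely determined scalar \(c(\mu)\).

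Next I would identify the ingredients. Comparing zeroth coordinates (both \(f_\mu\) and \(f_{\psi(\mu)}\) have zeroth coordinate \(1\)) gives \(c(\mu)=(Xf_\mu)_0=\sum_{n\ge 0}X_{0n}\mu^n\), where \(X_{0n}=\langle Xe_n,e_0\rangle\); since the zeroth row of \(X\) is square summable (its generating function is \(\overline{X^\ast e_0}\)), the function \(c\) is holomorphic on \(\mathbb D\). The key observation is that \(f_{\psi(\mu)}\) is exactly \(E_\lambda f_\mu\): the binomial expansion \(\psi(\mu)^m=\sum_{k=0}^m\binom{m}{k}\lambda^k(1-\lambda)^{m-k}\mu^k\) shows that the \(m\)th coordinate of \(E_\lambda f_\mu\) equals \(\psi(\mu)^m\), which is precisely the computation already carried out in Proposition \ref{perro}. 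Hence \(Xf_\mu=c(\mu)\,E_\lambda f_\mu\) for every \(\mu\in\mathbb D\). Since the family \(\{f_\mu:\mu\in\mathbb D\}\) is total and \(X\neq 0\), the function \(c\) cannot vanish identically; let \(n_0\) be the order of the zero of \(c\) at the origin and write \(c(\mu)=\mu^{n_0}\tilde c(\mu)\) with \(\tilde c(0)\neq 0\).

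It then remains to recognize the two remaining factors. Writing \(\tilde c(\mu)=\sum_{j\ge 0}\alpha_{-j}\mu^{j}\) with \(\alpha_0=\tilde c(0)\neq 0\), let \(A\) be the co-analytic Toeplitz matrix associated with the sequence \((\alpha_n)\), so that \(\alpha_n=0\) for \(n\ge 1\); this is the unique co-analytic Toeplitz matrix satisfying \(Af_\mu=\tilde c(\mu)f_\mu\) on each eigenvector. Since \((S^\ast)^{n_0}f_\mu=\mu^{n_0}f_\mu\), one computes \(E_\lambda A(S^\ast)^{n_0}f_\mu=\mu^{n_0}\tilde c(\mu)\,E_\lambda f_\mu=c(\mu)f_{\psi(\mu)}=Xf_\mu\) for every \(\mu\in\mathbb D\). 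Matching the row generating functions \(\sum_n X_{mn}\mu^n=c(\mu)\psi(\mu)^m\) then yields the entrywise identity \(X=E_\lambda A(S^\ast)^{n_0}\), which is the factorization (\ref{factor}).

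The step I expect to be the main obstacle is the precise operator-theoretic meaning of the middle factor. The operator \(E_\lambda\) is injective but, for \(\lambda<1\), it is not bounded below: in reproducing-kernel terms it compresses \(\mathbb D\) onto the disc \(\psi(\mathbb D)\), which touches the unit circle only at the point \(1\), so that \(\|E_\lambda f_\mu\|/\|f_\mu\|\to 0\) as \(\mu\) approaches the boundary away from \(1\). Consequently \(E_\lambda\) cannot simply be inverted, the naive estimate \(|c(\mu)|\le\|X\|\,\|f_\mu\|/\|f_{\psi(\mu)}\|\) diverges near the boundary, and the symbol \(c\) need not belong to \(H^\infty\); thus \(A\) must be read as a (possibly unbounded) co-analytic Toeplitz matrix, and the factorization has to be established at the level of matrix entries, or equivalently on the total family \(\{f_\mu\}\), rather than by manipulating three bounded operators. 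Making this bookkeeping rigorous—in particular verifying that the eigenvector relation pins down every entry \(X_{mn}\) and imposes no further constraint—is the delicate part of the argument.
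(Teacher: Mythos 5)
Your proposal is correct, but it follows a genuinely different route from the paper. The paper works column by column: it sets \(n_0=\min\{n:Xe_n\neq 0\}\), notes that \(Xe_{n_0}\) lies in \(\ker(I-S^\ast-\lambda)\), and then runs an induction, using at each step that \(\ker[S^\ast-(1-\lambda)I]\) is one-dimensional (Lemma \ref{vito}) to produce the scalars \(\beta_n\) with \(Xe_n=\sum_{k=0}^n\beta_{n-k}E_\lambda e_k\); the Toeplitz coefficients are thus constructed recursively, and the cases \(n_0=0\) and \(n_0\geq 1\) are treated separately. You instead diagonalize: applying \(X\) to the analytic eigenvector family \(f_\mu\) and invoking simplicity of the eigenvalues, you get \(Xf_\mu=c(\mu)f_{\psi(\mu)}\) with \(\psi(\mu)=(1-\lambda)+\lambda\mu\), observe the clean identity \(E_\lambda f_\mu=f_{\psi(\mu)}\), and then read off all three factors at once from the single holomorphic function \(c\): the power \((S^\ast)^{n_0}\) from the order of the zero of \(c\) at the origin, the Toeplitz matrix \(A\) from the Taylor coefficients of \(\tilde c\), and \(E_\lambda\) from the substitution \(\mu\mapsto\psi(\mu)\). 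What your approach buys is conceptual transparency (each factor has a visible function-theoretic role), a uniform treatment with no case split on \(n_0\), and an explanation of why the factorization must take this form; what the paper's induction buys is that every identity involved is a finite linear combination of columns, so no interchange of infinite sums ever needs justification. Your final passage correctly identifies the one point where care is needed: since \(A\) is not known to be bounded (indeed this is the paper's first open problem), the identity \(X=E_\lambda A(S^\ast)^{n_0}\) must be read entrywise or column-wise, and your generating-function matching \(\sum_n X_{mn}\mu^n=c(\mu)\psi(\mu)^m\) does deliver exactly that, because for fixed row \(m\) the relevant inner sums are finite (the binomial sum has at most \(m+1\) terms) and the coefficient sequence of \(\tilde c\) is square-summable, so the interchange of summation is legitimate. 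This puts your factorization in precisely the same (formal, column-wise) sense as the paper's.
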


\begin{proof}
We have \((I-S^\ast)Xe_0=\lambda Xe_0\) and \((I-S^\ast)Xe_n=\lambda (Xe_n-Xe_{n-1})\) for all \(n \geq 1.\)  Since \(X \neq 0,\) there is some \(n \in {\mathbb N}\) such that \(Xe_n \neq 0.\) Let \(n_0=\min \{n \in \mathbb{N} \colon Xe_n \neq 0\}.\) 
 
First step: Let us suppose that \(n_0=0\) and notice that  \(Xe_0\) is an eigenvector of \(I-S^\ast\) corresponding to the eigenvalue \(\lambda,\) so that  according to Lemma \ref{vito}, there is a nonzero complex scalar \(\beta_0\) such that 
\begin{eqnarray}
\label{toro}
Xe_0  = \beta_0 \sum_{n=0}^\infty (1-\lambda)^n e_n.
\end{eqnarray}
We claim that there is a sequence of complex scalars \((\beta_n)_{n \in \mathbb N} \) with \(\beta_0 \neq 0\) and such that for every \(n \in \mathbb N,\)
\begin{eqnarray}
\label{beta}
Xe_n = \sum_{k=0}^n \beta_{n-k} E_\lambda e_k.
\end{eqnarray}
We proceed by induction. If  \(n=0,\) this follows trivially from equation~(\ref{toro}).  Then, suppose that \(n \geq 1\) and   the complex scalars \(\beta_0, \ldots , \beta_{n-1}\) are constructed in such a way that
\[
Xe_{n-1} = \sum_{k=0}^{n-1} \beta_{n-1-k} E_\lambda e_k.
\]
Notice that 
\begin{align*}
[S^\ast - (1-\lambda) I]Xe_n  & = \lambda Xe_{n-1}
 = \lambda \sum_{k=0}^{n-1} \beta_{n-1-k} E_\lambda e_k\\
& = [S^\ast - (1-\lambda) I ] \left(\sum_{k=0}^{n-1} \beta_{n-1-k} E_\lambda e_{k+1} \right),
\end{align*}
so that
\[
Xe_n - \sum_{k=0}^{n-1} \beta_{n-1-k} E_\lambda e_{k+1} \in \ker [S^\ast - (1-\lambda) I].
\]
Finally, according to Lemma \ref{vito},  there is a complex scalar \(\beta_n\) such that
\[
Xe_n - \sum_{k=0}^{n-1} \beta_{n-1-k} E_\lambda e_{k+1} = \beta_n E_\lambda e_0,
\]
and the claim follows. Now, let \((\alpha_n)_{n \in \mathbb Z}\) be the two sided sequence defined by   \(\alpha_{-n}= \beta_n\) for all \(n \geq 1\) and \(\alpha_n = 0 \)  for all \(n \in \mathbb N,\)  and let \(A\) be the co-analytic Toeplitz matrix associated with the sequence \((\alpha_n)_{n \in \mathbb Z}.\) We have that \(X=E_\lambda A,\) so that equation (\ref{factor}) holds with \(n_0=0.\)

Second step: Suppose that \(n_0 \geq 1.\)  Notice that \(Xe_n= 0\) for all  \(0 \leq n < n_0\) and \(Xe_{n_0} \neq 0.\) Thus, \((I-S^\ast)Xe_{n_0}=\lambda Xe_{n_0}\) and \( (I-S^\ast)Xe_n= \lambda (Xe_n-Xe_{n-1})
\)
for each \(n > n_0,\) or in other words,  \((I-S^\ast)XS^{n_0}e_0= \lambda XS^{n_0}e_0\) and  \((I-S^\ast)XS^{n_0} e_n= \lambda (XS^{n_0}e_n-XS^{n_0}e_{n-1})\) for each \(n \geq 1.\) This means that  \(XS^{n_0}\) is a nonzero linear operator as in the first step of the proof. Therefore, there is a sequence \((\alpha_n)\)  of complex scalars  with \(\alpha_0 \neq 0\) and such that \(XS^{n_0}=E_\lambda A,\) where \(A\) is the Toeplitz operator associated with the sequence \((\alpha_n).\) Finally, since \(Xe_n=0\) for \(0 \leq n \leq n_0\), it follows that \(X=E_\lambda A (S^\ast)^{n_0}.\)
\end{proof}
 
We finish this section with the consideration of  the question  of boundedness for the Euler operator~\(E_\lambda.\) We already mentioned   that  Rhoades \cite{R} noted that  \(E_\lambda \) is bounded for \(1/2 < \lambda \leq 1.\)  He proved  that  in fact  we have \(\|E_\lambda \| = \lambda^{-1/2}.\)  We show in Proposition~\ref{bdd1} below that \(E_\lambda\) is also bounded for \(0 < \lambda \leq 1/2\) and moreover, \(\|E_\lambda \| \leq (1-\lambda)^{-1/2}.\) Since we could not find a proof of this fact in the literature, we include an argument that is based on a criterion due to Schur. A  proof of this criterion, different from the original one, can be found in the paper  of Brown, Halmos and Shields~\cite{BHS}, where it is applied to show the boundedness of  both the continuous and the discrete Ces\`aro operators. 
\begin{lemma}[Schur test] 
\label{schur}
If \(a_{nk} \geq 0,\) if \(p_k>0,\) and if \(\alpha,\beta >0\) are such that 
\begin{align}
& \sum_{k=0}^\infty a_{nk}p_k \leq \alpha p_n,\\
& \sum_{n=0}^\infty a_{nk}p_n \leq \beta p_k,
\end{align}
then there is a bounded linear operator \(X\) with \(\|X\|^2 \leq \alpha\beta\) and such that for all \(n \in \mathbb N,\)
 \[
 (Xf)(n) = \sum_{k=0}^\infty a_{nk} f(k).
 \] 
\end{lemma}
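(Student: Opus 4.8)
The plan is to prove the norm bound $\|Xf\|^2 \le \alpha\beta\|f\|^2$ directly on $\ell^2$ by a weighted application of the Cauchy--Schwarz inequality, the weights being supplied by the sequence $(p_k)$. First I would treat nonnegative $f$, where every sum is a sum of nonnegative terms and hence well-defined in $[0,\infty]$; the general complex case then follows by dominating $|(Xf)(n)| \le \sum_k a_{nk}|f(k)|$ and applying the nonnegative case to $|f|$, which has the same $\ell^2$ norm as $f$.

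The central step is the factorization $a_{nk}f(k) = \bigl(a_{nk}^{1/2}p_k^{1/2}\bigr)\bigl(a_{nk}^{1/2}p_k^{-1/2}f(k)\bigr)$, which is legitimate because $a_{nk}\ge 0$ and $p_k>0$. Cauchy--Schwarz then gives
\[
\left(\sum_{k=0}^\infty a_{nk}f(k)\right)^2 \le \left(\sum_{k=0}^\infty a_{nk}p_k\right)\left(\sum_{k=0}^\infty a_{nk}p_k^{-1}f(k)^2\right) \le \alpha p_n \sum_{k=0}^\infty a_{nk}p_k^{-1}f(k)^2,
\]
where the first hypothesis has been used. Summing over $n$ and interchanging the order of summation---justified by Tonelli's theorem, since all terms are nonnegative---I would obtain
\[
\sum_{n=0}^\infty\left(\sum_{k=0}^\infty a_{nk}f(k)\right)^2 \le \alpha\sum_{k=0}^\infty p_k^{-1}f(k)^2\sum_{n=0}^\infty a_{nk}p_n \le \alpha\beta\sum_{k=0}^\infty f(k)^2,
\]
the last inequality being the second hypothesis. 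This is exactly $\|Xf\|^2\le\alpha\beta\|f\|^2$.

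The one point that needs care---and the only real obstacle---is well-definedness: a priori each inner sum $\sum_k a_{nk}f(k)$ could be infinite, so the operator $X$ is not obviously defined on all of $\ell^2$. This is resolved by the chain of inequalities itself rather than by any separate argument. For $f\in\ell^2$ with $f\ge 0$ the right-hand side above is finite, which forces each individual term $\bigl(\sum_k a_{nk}f(k)\bigr)^2$ to be finite and the resulting sequence to lie in $\ell^2$; passing to $|f|$ then shows that for arbitrary $f\in\ell^2$ the defining series converges absolutely for every $n$. Linearity of $X$ is immediate from the linearity of each row sum, and the displayed estimate yields $\|X\|^2\le\alpha\beta$, which completes the argument.
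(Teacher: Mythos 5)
Your proof is correct. There is nothing in the paper to compare it against: the authors state the Schur test as a known criterion, explicitly attributing it to Schur and referring the reader to the paper of Brown, Halmos and Shields for a proof, so the lemma carries no internal argument in this manuscript. Your argument is the standard one for this result: the splitting $a_{nk}f(k) = \bigl(a_{nk}^{1/2}p_k^{1/2}\bigr)\bigl(a_{nk}^{1/2}p_k^{-1/2}f(k)\bigr)$, Cauchy--Schwarz applied row by row, the row hypothesis to absorb $\sum_k a_{nk}p_k \le \alpha p_n$, then Tonelli and the column hypothesis $\sum_n a_{nk}p_n \le \beta p_k$. Your treatment of well-definedness is also the right one: since every quantity is nonnegative, the whole chain of inequalities is valid in $[0,\infty]$, and the finiteness of the final bound $\alpha\beta\|f\|^2$ forces each row sum $\sum_k a_{nk}|f(k)|$ to be finite and the image sequence to lie in $\ell^2$; reducing the complex case to $|f|$ then gives $\|Xf\| \le \|X|f|\,\| \le (\alpha\beta)^{1/2}\|f\|$. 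This is a complete and self-contained proof of the lemma the paper only cites.
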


\begin{proposition} If \(0<\lambda\leq 1/2\) then  the Euler operator \(E_\lambda\)  is bounded with \(\|E_\lambda\| \leq (1-\lambda)^{-1/2}.\)
\label{bdd1}
\end{proposition}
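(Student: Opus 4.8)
The plan is to invoke Schur's test (Lemma~\ref{schur}) for the nonnegative matrix $a_{nk}=\binom{n}{k}\lambda^{k}(1-\lambda)^{n-k}$, which is legitimate because $0<\lambda\le 1/2$ makes every entry nonnegative, so all the sums below consist of nonnegative terms. I would test against a one‑parameter family of geometric weights $p_{k}=r^{k}$ with $r>0$ to be fixed at the end, and compute the two weighted sums in closed form.

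For the row sums the binomial theorem gives
\[
\sum_{k=0}^{n} a_{nk}\,p_{k}=\sum_{k=0}^{n}\binom{n}{k}(\lambda r)^{k}(1-\lambda)^{n-k}=\bigl((1-\lambda)+\lambda r\bigr)^{n},
\]
so the first Schur inequality $\sum_{k}a_{nk}p_{k}\le \alpha\,p_{n}$ amounts to $\bigl((1-\lambda)+\lambda r\bigr)^{n}\le \alpha\,r^{n}$ for all $n$. For the column sums I would substitute $j=n-k$ and use the negative binomial series $\sum_{j\ge 0}\binom{k+j}{k}x^{j}=(1-x)^{-(k+1)}$, valid once $x=(1-\lambda)r<1$, to obtain
\[
\sum_{n=k}^{\infty} a_{nk}\,p_{n}=\lambda^{k}r^{k}\bigl(1-(1-\lambda)r\bigr)^{-(k+1)},
\]
so the second Schur inequality $\sum_{n}a_{nk}p_{n}\le \beta\,p_{k}$ amounts to $\lambda^{k}\bigl(1-(1-\lambda)r\bigr)^{-(k+1)}\le\beta$ for all $k$. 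The conclusion would then be $\|E_{\lambda}\|^{2}\le\alpha\beta$.

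The clean endpoint is $r=1$, where $E_{\lambda}$ is row stochastic: the row sums are identically $1$, so $\alpha=1$, while the column sums collapse to $\lambda^{-1}$, so $\beta=\lambda^{-1}$, and one reads off boundedness with the bound $\|E_{\lambda}\|\le\lambda^{-1/2}$ for every $0<\lambda<1$. To push the constant toward the value asserted in the range $\lambda\le 1/2$, I would try to move $r$ away from $1$ and optimize the product $\alpha\beta$ subject to the convergence constraint $(1-\lambda)r<1$.

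This last step is exactly where I expect the main obstacle to lie. The row inequality is uniform in $n$ precisely when $(1-\lambda)+\lambda r\le r$, that is $r\ge 1$, whereas the column inequality is uniform in $k$ precisely when $\lambda\le 1-(1-\lambda)r$, that is $r\le 1$; hence a single geometric weight pins $r=1$. Reaching a sharper constant therefore forces a \emph{non-geometric} weight $p_{k}$, one that mimics the concentration of the binomial mass near the mode $k\approx\lambda n$ in the row sums and near $n\approx k/\lambda$ in the column sums, or else an appeal to the semigroup identity $E_{\lambda}E_{\mu}=E_{\lambda\mu}$ (checked by a Vandermonde convolution) together with the $\lambda\leftrightarrow 1-\lambda$ symmetry of the estimate. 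Verifying that such a tailored weight keeps both Schur inequalities uniform in $n$ and $k$ while the generalized binomial series still converges is the delicate point, and the step I would expect to consume most of the work.
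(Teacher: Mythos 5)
Your approach coincides with the paper's: both apply the Schur test (Lemma~\ref{schur}) to the matrix $a_{nk}=\binom{n}{k}\lambda^{k}(1-\lambda)^{n-k}$, the paper with the constant weights $p_k=1$, which is your case $r=1$. Your computations are correct, and in fact more careful than the paper's own: the row sums equal $1$ by the binomial theorem, and the column sums are
\[
\sum_{n=k}^{\infty}\binom{n}{k}\lambda^{k}(1-\lambda)^{n-k}
=\lambda^{k}\sum_{j=0}^{\infty}\binom{k+j}{k}(1-\lambda)^{j}
=\lambda^{k}\cdot\lambda^{-(k+1)}=\frac{1}{\lambda},
\]
so the Schur test delivers $\|E_\lambda\|\le\lambda^{-1/2}$ for every $0<\lambda<1$. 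This already proves the substantive content of the proposition, namely that $E_\lambda$ is bounded for $0<\lambda\le 1/2$, which is all that the rest of the paper ever uses.

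The refinement you could not reach, the constant $(1-\lambda)^{-1/2}$, is unreachable because it is false for small $\lambda$. Indeed $E_\lambda e_0=\sum_{n\ge 0}(1-\lambda)^{n}e_n$, so
\[
\|E_\lambda\|^{2}\ \ge\ \|E_\lambda e_0\|^{2}=\frac{1}{1-(1-\lambda)^{2}}=\frac{1}{\lambda(2-\lambda)},
\]
and $\frac{1}{\lambda(2-\lambda)}>\frac{1}{1-\lambda}$ precisely when $\lambda^{2}-3\lambda+1>0$, i.e.\ for $0<\lambda<(3-\sqrt{5})/2\approx 0.382$. Thus your structural observation that the two Schur inequalities pin $r=1$ and yield only $\lambda^{-1/2}$ is not a defect of your weights: no choice of weights (nor the semigroup identity $E_\lambda E_\mu=E_{\lambda\mu}$) can certify a bound that the operator violates. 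The discrepancy originates in the paper's proof, which rests on the identity $\frac{d^{k}}{d\lambda^{k}}(1-\lambda)^{-1}=\sum_{n\ge k}\frac{n!}{(n-k)!}(1-\lambda)^{n-k}$; this is incorrect, since term-by-term differentiation of $\sum_{n}\lambda^{n}$ produces powers of $\lambda$, not of $1-\lambda$, and the correct evaluation of the series at $1-\lambda$ gives $k!\,\lambda^{-(k+1)}$, hence column sums exactly $1/\lambda$ as you found, rather than the paper's $\lambda^{k}(1-\lambda)^{-k-1}\le(1-\lambda)^{-1}$. So do not spend further effort on tailored weights: the honest conclusion is boundedness with $\|E_\lambda\|\le\lambda^{-1/2}$, and the proposition's stated constant should be corrected accordingly.
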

\begin{proof} We shall apply the Schur test to the infinite matrix
\begin{eqnarray}
a_{nk} = \left \{ \begin{array}{rl}
			\displaystyle{\binom{n}{k} \lambda^k (1-\lambda)^{n-k},} & \text{if } 0 \leq k \leq n,\\
			0, & \text{if } k>n.
			\end{array}
			\right. 
\end{eqnarray}
If we set \(p_k=1,\) then it follows from the binomial theorem that
\begin{eqnarray*}
\sum_{k=0}^\infty a_{nk}p_k & = & \sum_{k=0}^n \binom{n}{k} \lambda^k (1-\lambda)^{n-k}=1.
\end{eqnarray*}
On the other hand, using the geometric series expansion
\(\displaystyle{
(1-\lambda)^{-1} = \sum_{n=0}^\infty \lambda^n,}
\) we get 
\[
\frac{d^k}{d\lambda^k} (1-\lambda)^{-1} = \sum_{n=k}^\infty \frac{n!}{(n-k)!} (1-\lambda)^{n-k},
\]
so that
\begin{eqnarray*}
\sum_{n=0}^\infty a_{nk}p_n & = & \sum_{n=k}^\infty \binom{n}{k} \lambda^k (1-\lambda)^{n-k} = \frac{\lambda^k}{k!}  \sum_{n=k}^\infty \frac{n!}{(n-k)!} (1-\lambda)^{n-k}\\
					     & = & \frac{\lambda^k}{k!} \frac{d^k}{d\lambda^k} (1-\lambda)^{-1} = \lambda^k (1-\lambda)^{-k-1} \leq (1-\lambda)^{-1},
\end{eqnarray*}
and we  conclude that \(E_\lambda\) is bounded with \(\|E_\lambda\| \leq (1-\lambda)^{-1/2},\) as we wanted.
\end{proof}

We shall use an elementary fact that can be stated as follows.

\begin{lemma}
\label{uni}
Let \(\lambda\) be a nonzero complex number. Then \(|\lambda | + |\lambda -1| \leq 1\) if and only if \(\lambda \in (0,1].\)
\end{lemma}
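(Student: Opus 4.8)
The plan is to read the quantity $|\lambda| + |\lambda - 1|$ as the sum of the distances from the point $\lambda$ to the two points $0$ and $1$ in the complex plane, so that the entire statement becomes the triangle inequality together with its equality case. The role of the hypothesis that $\lambda$ is nonzero is simply to exclude the boundary point $\lambda = 0$, which also satisfies $|\lambda| + |\lambda - 1| = 1$ but does not lie in $(0,1]$.

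The reverse implication is a direct computation and I would dispose of it first. If $\lambda \in (0,1]$, then $\lambda$ is real with $0 < \lambda \le 1$, so that $|\lambda| = \lambda$ and $|\lambda - 1| = 1 - \lambda$, and hence $|\lambda| + |\lambda - 1| = \lambda + (1 - \lambda) = 1 \le 1$.

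For the forward implication I would start from the triangle inequality applied to the splitting $1 = \lambda + (1 - \lambda)$, namely
\[
1 = |\lambda + (1-\lambda)| \le |\lambda| + |1 - \lambda| = |\lambda| + |\lambda - 1|.
\]
Since the hypothesis is the reverse inequality $|\lambda| + |\lambda - 1| \le 1$, the two estimates combine to force equality $|\lambda| + |\lambda - 1| = 1$. The remaining task is to show that this equality pins $\lambda$ down to the closed segment $[0,1]$, after which the nonzero assumption yields $\lambda \in (0,1]$.

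The one genuine step, and the place I would be most careful, is the equality analysis. I would write $\lambda = x + iy$ with $x,y$ real, so that the equality reads $\sqrt{x^2 + y^2} + \sqrt{(1-x)^2 + y^2} = 1$. Each square root is a nondecreasing function of $|y|$, and at least one of them is strictly increasing in $|y|$ for every fixed $x$; evaluating at $y = 0$ gives the lower bound $|x| + |1-x| \ge |x + (1-x)| = 1$. Consequently, if $y \ne 0$ the left-hand side would strictly exceed $1$, a contradiction, so $y = 0$ and $\lambda = x$ is real. The residual equality $|x| + |1-x| = 1$ then fails for $x < 0$ and for $x > 1$ (where the sum exceeds $1$), leaving $0 \le x \le 1$. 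Invoking $\lambda \ne 0$ discards $x = 0$, so $\lambda \in (0,1]$, completing the proof. (Alternatively, one may invoke the standard equality condition for the triangle inequality directly: $|\lambda| + |1-\lambda| = |\lambda + (1-\lambda)|$ holds only when $\lambda$ and $1 - \lambda$ are nonnegative real multiples of one another, which again forces $\lambda$ onto the segment $[0,1]$.)
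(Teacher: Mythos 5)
Your proof is correct and rests on the same idea as the paper's: the paper also handles the forward implication by showing that real $\lambda$ outside $[0,1]$ makes the sum exceed $1$ and that non-real $\lambda$ gives strict inequality because $\lambda$ and $1-\lambda$ are linearly independent over $\mathbb{R}$, which is exactly the equality case of the triangle inequality that you verify in coordinates. The only cosmetic difference is that you first force the equality $|\lambda|+|\lambda-1|=1$ and then analyze it, while the paper argues contrapositively case by case.
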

\begin{proof}
Let us prove the nontrivial implication. If \(\lambda \in \mathbb R\) and \(\lambda > 1\) then we have \(|\lambda |+ | 1-\lambda | = 2\lambda -1 >1,\) and if \(\lambda \in \mathbb R\) and \(\lambda <0\) then \(|\lambda|+|1-\lambda | = 1-2\lambda >1.\)  Also, if \(\lambda \in \mathbb C\) and \({\rm Im}\,\lambda \neq 0\) then \(|\lambda | + |1-\lambda | >1\) because \(\lambda\) and \(1-\lambda\) are linearly independent over \(\mathbb R.\) 
\end{proof}

\begin{proposition} 
\label{bdd3}
If \(\lambda \in \mathbb C \backslash (0,1]\) then the Euler operator \(E_\lambda\)  is unbounded.
\end{proposition}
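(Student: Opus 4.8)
The plan is to prove unboundedness by exhibiting, for $\lambda\notin(0,1]$, a single coordinate functional of $E_\lambda$ whose norm is as large as we please; the threshold that separates boundedness from unboundedness will turn out to be exactly the quantity $|\lambda|+|\lambda-1|$ controlled by Lemma \ref{uni}. The degenerate value $\lambda=0$ is immediate, since $E_0 e_0=\sum_{n=0}^\infty e_n\notin\ell^2$, so $E_0$ does not even map $\ell^2$ into itself. Hence I may assume $\lambda\neq0$ and $\lambda\notin(0,1]$, which by Lemma \ref{uni} means $|\lambda|+|\lambda-1|>1$.

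First I would observe that if $E_\lambda$ were bounded, then for every $n$ the evaluation $f\mapsto(E_\lambda f)(n)=\sum_{k=0}^n\binom{n}{k}\lambda^k(1-\lambda)^{n-k}f(k)$ is a bounded linear functional on $\ell^2$ of norm at most $\|E_\lambda\|$, because $|(E_\lambda f)(n)|\le\|E_\lambda f\|\le\|E_\lambda\|\,\|f\|$. The norm of this functional equals the $\ell^2$ norm of its coefficient vector, so
\[
\|E_\lambda\|^2\ \ge\ \sum_{k=0}^n\binom{n}{k}^2|\lambda|^{2k}|1-\lambda|^{2(n-k)}\qquad\text{for every }n.
\]
This is the key inequality, and the point I regard as the crux is to extract from it a bound that is sensitive to complex $\lambda$: the remedy is simply to pass to moduli inside the binomial sum before estimating.

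Indeed, writing $b_k=\binom{n}{k}|\lambda|^k|1-\lambda|^{n-k}\ge0$ and applying the Cauchy--Schwarz inequality to the $n+1$ terms gives $\bigl(\sum_{k=0}^n b_k\bigr)^2\le(n+1)\sum_{k=0}^n b_k^2$, while the binomial theorem yields $\sum_{k=0}^n b_k=(|\lambda|+|1-\lambda|)^n$. Combining this with the key inequality produces
\[
\|E_\lambda\|\ \ge\ \frac{(|\lambda|+|1-\lambda|)^n}{\sqrt{n+1}}\qquad\text{for every }n.
\]
Since $|\lambda|+|\lambda-1|>1$ by Lemma \ref{uni}, the right-hand side tends to $\infty$ as $n\to\infty$, and therefore $E_\lambda$ cannot be bounded. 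This completes the argument; no delicate estimate or geometric input is needed beyond the elementary inequality of Lemma \ref{uni}, which is exactly what marks the boundary $|\lambda|+|\lambda-1|=1$ of the region $(0,1]$.
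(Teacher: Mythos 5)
Your proof is correct and is essentially the same as the paper's: the norm of your coordinate functional $f\mapsto(E_\lambda f)(n)$ is precisely $\|E_\lambda^\ast e_n\|$, which is the quantity the paper estimates, and both arguments combine the same Cauchy--Schwarz bound $\|E_\lambda^\ast e_n\|\ge (n+1)^{-1/2}(|\lambda|+|1-\lambda|)^n$ with Lemma \ref{uni}, treating $\lambda=0$ separately.
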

\begin{proof}
We have for every \(n \geq 0\)
\[
E^\ast_\lambda e_n = \sum_{k=0}^n \binom{n}{k} \lambda^k (1-\lambda)^{n-k} e_k.
\]
Using the Cauchy-Schwarz inequality gives
\begin{align*}
\|E^\ast_\lambda e_n\| & = \left \|  \sum_{k=0}^n \binom{n}{k} \lambda^k (1-\lambda)^{n-k} e_k  \right \| \\
& \geq \frac{1}{(n+1)^{1/2}}  \sum_{k=0}^n \binom{n}{k} |\lambda |^k |1-\lambda |^{n-k}\\
& =  \frac{(|\lambda |+ | 1-\lambda |)^n}{(n+1)^{1/2}} .
\end{align*}
If \(\lambda \neq 0\) then it follows from Lemma \ref{uni}  that  \(\|E^\ast_\lambda e_n\| \to \infty\) as \(n \to \infty.\) Finally, if \(\lambda =0\) then according to equation (\ref{euler}) we have \((E_0f)(n)= f(0)\) for all \(n \in \mathbb N,\) so that the  constant sequence \(E_0f\)  belongs to the complex Hilbert space  \( \ell^2\) only when  \(f(0)=0.\)
\end{proof}

\section{Extended eigenvalues for operators with rich point spectrum}
\label{sect:rich}
\noindent
We say that an operator \(T\) on a complex Banach space has {\em rich point spectrum}  provided that  \({\rm int}\, \sigma_p(T) \neq \emptyset,\) and that  for every open disc \(D \subseteq \sigma_p(T),\) the family of eigenvectors
\begin{eqnarray}
\label{total}
\bigcup_{z \in D} \ker (T-z)
\end{eqnarray}
is a total set. We  shall  see below  that  two examples of operators with rich point spectrum are the finite continuous Ces\`aro operator and  the adjoint of the discrete Ces\`aro operator. There are other natural examples  like  a bilateral weighted shift whose  point spectrum has  non empty interior, or  the adjoint of an  analytic Toeplitz operator with non constant symbol.

Recall that if \(\varphi\) is a bounded analytic function on \(\mathbb D\)  then the {\em analityc Toeplitz operator} \(T_\varphi\) is defined  on the Hardy space \(H^2(\mathbb D)\) by the expression \(T_\varphi f= \varphi \cdot f.\)
Deddens \cite{D} studied intertwining relations between analytic Toeplitz operators.  Bourdon and Shapiro~\cite{BS} generalized his work later on and they applied  it to study the extended eigenvalues of an analytic Toeplitz operator. 

Deddens showed that if there is a  non zero operator  \(X\) that intertwines  two analytic Toeplitz operators \(T_\varphi\) and  \(T_\psi,\)  that is, such that \(XT_\varphi =T_\psi X,\) then 
\begin{eqnarray}
\label{dedd}
\psi (\mathbb D) \subseteq {\rm clos}\, \varphi(\mathbb D).
\end{eqnarray}
Bourdon and Shapiro  observed that, as a consequence of this, if \(\lambda\) is an extended eigenvalue of an analytic Toeplitz operator \(T_\varphi,\) where \(\varphi\) is not constant, then   there is a non zero operator  that intertwines \( T_{\lambda \varphi}\) and \(T_\varphi,\) so that 
\begin{eqnarray}
\label{quick}
(1/\lambda)\cdot \varphi (\mathbb D) \subseteq {\rm clos}\,\varphi (\mathbb D).
\end{eqnarray}
Bourdon and Shapiro say that then  the geometry of \(\varphi(\mathbb D)\) quickly determines the extended eigenvalues of \(T_\varphi\) (for instance,    if \(\lambda\) is an extended eigenvalue of the shift operator \(T_z \in \mathcal B(H^2(\mathbb D))\)  then it follows from Deddens result that  \((1/\lambda) \cdot \mathbb D \subseteq {\rm clos}\, \mathbb D,\) and therefore \(|\lambda | \geq 1.\) )

We  prove in Theorem \ref{key} that, in general, if  an operator has rich point spectrum then the geometry of its point spectrum determines the extended eigenvalues. The precise statement of this result is provided below. Then,  we apply Theorem \ref{key}   to show that if \(\lambda\) is an extended eigenvalue for \(C_1\) on \(L^p[0,1]\)  then \(\lambda\) is real and \(0 < \lambda \leq 1\) (Corollary \ref{thm:ext}) and  if \(\lambda\) is an extended eigenvalue for \(C_0^\ast\) on \(\ell^p\) then \(\lambda\) is real and \(\lambda \geq 1\) (Corollary \ref{thm:disc}). 

As another  consequence of our general result,  in section \ref{sec:bilateral} we get that  if \(\lambda\) is an extended eigenvalue  of a bilateral weighted shift \(W\) whose point spectrum has non empty interior then \(|\lambda|=1.\) 

Finally,  if \(\lambda\) is an extended eigenvalue  of an analytic Toeplitz operator \(T_\varphi\)  on the Hardy space \(H^2(\mathbb D)\) with non constant symbol then Deddens  result (\ref{quick}) can be derived as a consequence of Theorem \ref{key}.

\begin{theorem}
\label{key}
Let us suppose that an operator \(T\) on a complex Banach space has  rich point spectrum.
If \(\lambda\) is an extended eigenvalue for \(T\) then we have 
\begin{align}
\lambda \cdot {\rm int}\, \sigma_p(T) \subseteq {\rm clos}\,\sigma_p(T).
\end{align}
\end{theorem}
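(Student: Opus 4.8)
The plan is to exploit a single algebraic consequence of the intertwining relation and then feed it into the richness hypothesis. The starting observation is that if \(TX = \lambda XT\) and \(f\) is an eigenvector of \(T\) with \(Tf = zf,\) then \(T(Xf) = \lambda X(Tf) = \lambda z\, Xf;\) hence, whenever \(Xf \neq 0,\) the vector \(Xf\) is an eigenvector of \(T\) with eigenvalue \(\lambda z,\) so that \(\lambda z \in \sigma_p(T).\) Thus \(X\) sends eigenvectors to eigenvectors and multiplies the corresponding eigenvalue by \(\lambda,\) as long as it does not annihilate them.

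Next I would fix a point \(z_0 \in {\rm int}\,\sigma_p(T)\) and, for each open disc \(D\) centered at \(z_0\) and contained in \(\sigma_p(T)\) — such discs exist because \(z_0\) is an interior point — invoke the richness hypothesis: the family \(\bigcup_{z \in D} \ker(T - z)\) is total, so its linear span is dense in the space. Since \(X\) is a nonzero bounded operator, it cannot vanish on a dense set; hence there are some \(z \in D\) and some eigenvector \(f \in \ker(T - z)\) with \(Xf \neq 0.\) By the opening observation, this produces a point \(z \in D\) with \(\lambda z \in \sigma_p(T).\)

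Finally I would let the radius of \(D\) shrink to zero. Choosing a sequence of discs \(D_n \subseteq \sigma_p(T)\) about \(z_0\) with radii tending to \(0,\) the previous step yields points \(z_n \in D_n\) with \(\lambda z_n \in \sigma_p(T).\) Since \(z_n \to z_0,\) we have \(\lambda z_n \to \lambda z_0,\) and as each \(\lambda z_n\) lies in \(\sigma_p(T),\) the limit \(\lambda z_0\) lies in \({\rm clos}\,\sigma_p(T).\) Because \(z_0 \in {\rm int}\,\sigma_p(T)\) was arbitrary, this establishes the inclusion \(\lambda \cdot {\rm int}\,\sigma_p(T) \subseteq {\rm clos}\,\sigma_p(T).\)

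I expect the only genuine subtlety to lie in combining totality with the continuity of \(X\) in the second step: the richness condition is exactly what guarantees that the eigenvectors attached to an arbitrarily small disc around \(z_0\) remain abundant enough that \(X\) cannot annihilate all of them. The remaining ingredients are routine — the eigenvalue bookkeeping of the first step and the limiting argument of the third.
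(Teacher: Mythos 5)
Your proposal is correct and follows essentially the same route as the paper's own proof: use the intertwining relation to show that \(X\) maps eigenvectors for \(z\) to eigenvectors for \(\lambda z\) when it does not kill them, use totality of the eigenvectors attached to a small disc (plus linearity and continuity of \(X\neq 0\)) to find such surviving eigenvectors in every disc around a given interior point, and conclude by shrinking the discs and passing to the limit. No meaningful difference in decomposition or key ideas.
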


\begin{proof}
Let \(X\) be an extended eigenoperator of \(T\) corresponding to the extended eigenvalue \(\lambda,\)  that is, \(X \neq 0\) and \(TX=\lambda XT.\) Let \(z \in {\rm int}\, \sigma_p(T)\) and let \(n \in \mathbb N\) such that \(D(z,1/n) \subseteq \sigma_p(T).\) Since \(X \neq 0\) and \(T\) has rich point spectrum, there exist  \(z_n \in D(z,1/n)\)  and \(f_n \in \ker (T-z_n) \backslash \{0\}\) such that  \(Xf_n \neq 0.\) Hence,
\[
TXf_n=\lambda XTf_n= \lambda z_n Xf_n,
\]
and since \(Xf_n \neq 0,\) this means that \(\lambda z_n \in \sigma_p(T).\) Taking limits as \(n \to \infty\) yields  \(\lambda z \in {\rm clos}\,\sigma_p(T),\) as we wanted. 
\end{proof}

The following result will be applied at the end of the next section to  the finite continuous Ces\`aro operator and in section \ref{sect:disc} to  the adjoint of the discrete Ces\`aro operator.
\begin{theorem}
\label{Lp}
Let \(T\) be a bounded linear operator with rich point spectrum and such that  \(\sigma_p(T)=D(r,r)\) for some \(r>0.\) If \(\lambda\) is an extended eigenvalue for \(T\) then \(\lambda\) is real and \(0 < \lambda \leq 1.\) 
\end{theorem}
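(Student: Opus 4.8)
The plan is to read the conclusion off almost directly from Theorem \ref{key}, whose hypothesis is exactly that $T$ has rich point spectrum. Since $\sigma_p(T) = D(r,r)$ is an open disc, we have $\operatorname{int}\sigma_p(T) = D(r,r)$ and $\operatorname{clos}\sigma_p(T) = \overline{D(r,r)}$, so for any extended eigenvalue $\lambda$ Theorem \ref{key} yields the single geometric inclusion $\lambda \cdot D(r,r) \subseteq \overline{D(r,r)}$. The whole argument then reduces to converting this inclusion into the arithmetic condition of Lemma \ref{uni}.

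First I would dispatch the degenerate case $\lambda = 0$ separately. Because $0$ lies on the boundary of $D(r,r)$ and not in the open disc, $0 \notin \sigma_p(T)$; hence $\ker T = \{0\}$ and $T$ is injective. If $\lambda = 0$ were an extended eigenvalue we would have $TX = 0$ for some $X \neq 0$, forcing $\operatorname{ran} X \subseteq \ker T = \{0\}$ and thus $X = 0$, a contradiction. So $\lambda \neq 0$, and from now on multiplication by $\lambda$ is an invertible map of the plane.

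Next I would compute the image disc explicitly. Since multiplication by $\lambda$ scales distances by $|\lambda|$ and sends the centre $r$ to $\lambda r$, one has $\lambda \cdot D(r,r) = D(\lambda r, |\lambda| r)$. The containment of an open disc $D(a,s)$ in a closed disc $\overline{D(b,t)}$ is governed by the elementary triangle-inequality criterion $|a-b| + s \leq t$: sufficiency is immediate, and necessity follows by pushing a point of $D(a,s)$ radially away from $b$. Applying this with $a = \lambda r$, $s = |\lambda| r$, $b = r$, $t = r$, and dividing through by $r > 0$, gives precisely $|\lambda - 1| + |\lambda| \leq 1$.

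Finally, since $\lambda \neq 0$, Lemma \ref{uni} converts the inequality $|\lambda| + |\lambda - 1| \leq 1$ into the assertion that $\lambda \in (0,1]$, i.e.\ that $\lambda$ is real with $0 < \lambda \leq 1$, as claimed. I expect the only delicate point to be the bookkeeping about open versus closed discs: one must ensure that the inclusion produced by Theorem \ref{key} lands in the closure, that the disc-in-disc criterion is applied with an open disc inside a closed one, and that the endpoint $\lambda = 0$ (for which the inequality degenerates to an equality and is therefore \emph{not} covered by Lemma \ref{uni}) is ruled out by injectivity rather than by the geometry.
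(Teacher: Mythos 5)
Your proof is correct, but it takes a genuinely different route from the paper's. The paper inverts the picture: setting $\mu = 1/\lambda$, it notes that $z \in D(r,r)$ if and only if $1/z$ lies in the half-plane $\Omega_r = \{w \in \mathbb{C} \colon \operatorname{Re} w > 1/(2r)\}$, so Theorem~\ref{key} says that the map $w \mapsto \mu w$ sends $\Omega_r$ into $\overline{\Omega}_r$; iterating this map from a real starting point gives $\operatorname{Re}(\mu^n w) \geq 1/(2r)$ for all $n$, hence $\cos(n\theta) > 0$ for every $n$ (with $\theta = \arg\mu$), which forces $\theta = 0$, and $\mu \geq 1$ then follows by testing the boundary point $1/(2r)$. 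You instead stay entirely in the disc picture: computing $\lambda \cdot D(r,r) = D(\lambda r, |\lambda| r)$ and applying the elementary criterion for an open disc to lie in a closed one lands directly on the inequality $|\lambda| + |\lambda - 1| \leq 1$, which Lemma~\ref{uni} converts to $\lambda \in (0,1]$ --- a lemma the paper proves but deploys only in Proposition~\ref{bdd3}, for the unboundedness of the Euler operator, so you have repurposed an existing tool. Your version is shorter, avoids the dynamical iteration entirely, and is more careful on one point: you rule out $\lambda = 0$ via injectivity of $T$ (since $0 \notin \sigma_p(T) = D(r,r)$), whereas the paper's proof divides by $\lambda$ without comment; as you correctly observe, this case genuinely needs separate treatment, because $\lambda = 0$ satisfies the inequality $|\lambda| + |\lambda-1| \leq 1$ and is excluded only by the nonvanishing hypothesis in Lemma~\ref{uni}. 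What the paper's half-plane/iteration method buys is uniformity of technique: essentially the same argument disposes of the annulus case in Theorem~\ref{annulus}, where no comparably clean closed-form inclusion criterion between an annulus and its dilate is available.
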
 

\begin{proof}
Let \(\mu=1/\lambda.\) We must show that \(\mu\) is real and \(\mu \geq 1.\)  First of all, consider the open half plane \(\Omega_r = \{ w \in \mathbb C \colon {\rm Re}\, w > 1/(2r)\},\) and notice  that   \(z \in D(r,r)\) if and only if \(1/z \in \Omega_r.\) According to Lemma~\ref{key} we have \(\mu w \in \overline{\Omega}_r\) for every \(w \in \Omega_r.\) This means that the map \(\varphi(w)=\mu w\) takes \(\Omega_r\) into \(\overline{\Omega}_r,\) and it follows from continuity that \(\varphi\) takes the closed half plane \(\overline{\Omega}_r\)  into itself. Now start with a point \(w \in \Omega_r \cap \mathbb R\) and  iterate the map \(\varphi\) to get a sequence of points \((\mu^n w)\) in \(\overline{\Omega}_r,\) so that \({\rm Re}( \mu^n w) \geq 1/(2r),\) or in other words,
\[
{\rm Re}\, \left [ \left ( \frac{\mu}{|\mu|} \right )^n \right ] \geq \frac{1}{2rw |\mu|^n} >0.
\]
Finally, write \(\mu= |\mu| (\cos \theta + i \sin \theta)\) for some \(0 \leq \theta < 2 \pi.\) Observe that \(\cos n \theta >0\) for all \(n \in \mathbb N,\) and this can only happen if \(\theta=0.\) This shows that \(\mu\) is real. It is clear that \(\mu \geq 1\) because if \(\mu <1\) then \(\varphi\) maps \(\overline{\Omega}_r\) outside \(\overline{\Omega}_r;\) for instance \(\varphi(1/(2r)) = \mu/(2r) < 1/(2r),\) and this is a contradiction.
\end{proof}

The following result will be applied in section \ref{sec:bilateral} to  a bilateral weighted shift.
\begin{theorem}
\label{annulus}
Let \(T\) be a bounded linear operator with rich point spectrum  such that for some \(0<r < R,\)
\[
\{ z \in \mathbb C \colon r < |z| < R\} \subseteq \sigma_p(T) \subseteq \{ z \in \mathbb C \colon r\leq |z| \leq R\}.
\]
If \(\lambda\) is an extended eigenvalue of \(T\) then \(|\lambda |=1.\)
\end{theorem}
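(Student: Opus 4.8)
The plan is to deduce everything from Theorem~\ref{key}. Since \(\lambda\) is an extended eigenvalue of the operator \(T,\) which has rich point spectrum, that theorem gives at once the inclusion \(\lambda \cdot {\rm int}\, \sigma_p(T) \subseteq {\rm clos}\, \sigma_p(T).\) The first step is to locate the interior and the closure of the point spectrum relative to the two given annuli. The open annulus \(A = \{z \in \mathbb C \colon r < |z| < R\}\) is an open set contained in \(\sigma_p(T),\) hence \(A \subseteq {\rm int}\, \sigma_p(T);\) on the other hand, \(\sigma_p(T)\) is contained in the closed annulus \(\{z \in \mathbb C \colon r \leq |z| \leq R\},\) which is closed, so \({\rm clos}\, \sigma_p(T) \subseteq \{z \in \mathbb C \colon r \leq |z| \leq R\}.\) Combining these two observations with the inclusion furnished by Theorem~\ref{key} yields
\[
\lambda \cdot A \subseteq \{z \in \mathbb C \colon r \leq |z| \leq R\}.
\]

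Second, I would pass to moduli. Fix any radius \(\rho\) with \(r < \rho < R\) and pick a point \(z\) with \(|z| = \rho;\) then \(z \in A,\) so \(\lambda z\) lies in the closed annulus, that is, \(r \leq |\lambda|\,\rho \leq R.\) Since \(\rho\) is an arbitrary number in the open interval \((r, R),\) this produces the two families of inequalities \(|\lambda| \leq R/\rho\) and \(|\lambda| \geq r/\rho\) for every \(\rho \in (r, R).\) Letting \(\rho \to R^-\) in the first gives \(|\lambda| \leq 1,\) and letting \(\rho \to r^+\) in the second gives \(|\lambda| \geq 1;\) therefore \(|\lambda| = 1,\) as claimed. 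In particular the case \(\lambda = 0\) is automatically ruled out.

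The argument is short and presents no serious obstacle once Theorem~\ref{key} is available; the only point requiring a little care is that neither bounding circle of the annulus need belong to \(\sigma_p(T),\) so one cannot simply test the inclusion on the extreme radii \(\rho = r\) and \(\rho = R.\) This is exactly why the two inequalities are obtained through a limiting argument, approaching the endpoints through radii \(\rho\) strictly inside \((r, R)\) rather than by evaluating at them directly.
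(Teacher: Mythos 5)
Your proposal is correct, and it rests on exactly the same pillar as the paper's proof: apply Theorem~\ref{key} to get \(\lambda \cdot {\rm int}\,\sigma_p(T) \subseteq {\rm clos}\,\sigma_p(T),\) then sandwich the point spectrum between the open and closed annuli to obtain \(\lambda \cdot \{r<|z|<R\} \subseteq \{r \leq |z| \leq R\}.\) Where you diverge is the finishing step. The paper extends the dilation \(\varphi(z)=\lambda z\) by continuity to a map of the closed annulus \(\overline{\Omega}\) into itself, fixes a single point \(z_0 \in \overline{\Omega},\) and iterates: \(r \leq |\lambda|^n |z_0| \leq R\) for all \(n \in \mathbb N,\) which is impossible unless \(|\lambda|=1.\) You instead fix no point but vary the radius: for every \(\rho \in (r,R)\) you get \(r/\rho \leq |\lambda| \leq R/\rho,\) and letting \(\rho \to r^+\) and \(\rho \to R^-\) squeezes \(|\lambda|=1.\) Your version is slightly more economical --- it needs neither the continuity extension to \(\overline{\Omega}\) nor any dynamics, and your closing remark correctly identifies why the limiting argument (rather than evaluation at \(\rho=r\) or \(\rho=R\)) is the right substitute for the paper's closure step. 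The paper's iteration technique, on the other hand, is the same one it uses in Theorem~\ref{Lp} for the half-plane case, where iteration is genuinely indispensable (there one must show \(\cos n\theta > 0\) for all \(n\) to conclude the extended eigenvalue is real); in the annulus setting the paper is simply reusing that template, and your direct argument shows it can be dispensed with.
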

\begin{proof}
Consider the region  \(\Omega = \{ z  \in \mathbb C \colon r < |z | < R\}.\) It follows from Lemma \ref{key} that the map \(\varphi (z)=\lambda z\) takes \(\Omega\) into \(\overline{\Omega},\) and it follows from continuity that \(\varphi\) maps \(\overline{\Omega}\) into itself. Start with \(z_0 \in \overline{\Omega}\) and iterate the map \(\varphi\) to obtain a sequence of points \((\lambda^n z_0)\) in \(\overline{\Omega},\) so that for all \(n \in \mathbb N\) we have
\[
r \leq |\lambda |^n \cdot |z_0| \leq R,
\] 
and notice that this can only happen if \(|\lambda|=1.\)
\end{proof}

The following result provides a sufficient condition for a general operator to have rich point spectrum. We apply that condition in the next  section  to show that  the finite continuous Ces\`aro operator \(C_1\) on \(L^p[0,1]\) has rich point spectrum. We also apply our sufficient condition  in section \ref{sect:disc} to the adjoint of the discrete Ces\`aro operator, \(C_0^\ast\) on \(\ell^q,\) and in section \ref{sec:bilateral} to a bilateral weighted shift \(W\) whose point spectrum has non empty interior. Finally, in section \ref{toeplitz} we apply a suitable modification of that condition to  the adjoint of an analytic Toeplitz operator, \(T_\varphi^\ast\) where \(\varphi\) is non constant. 

\begin{lemma}
\label{rich}
Let \(T\) be a bounded linear operator on a complex Banach space~\(E\) and let us suppose that there is  an analytic mapping \(h \colon {\rm int}\, \sigma_p(T) \to E\) with  \(h(z) \in \ker (T-z ) \backslash \{0\}\) for all \(z \in {\rm int}\, \sigma_p(T)\) and  such that  \(\{ h(z) \colon z \in  {\rm int}\, \sigma_p(T)\}\) is a total subset of \(E.\)  Then \(T\) has rich point~spectrum.
\end{lemma}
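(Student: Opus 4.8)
The plan is to verify the two defining requirements of a rich point spectrum directly, letting the identity principle for scalar analytic functions do the essential work. First, since the hypothesis supplies a total set \(\{h(z) : z \in {\rm int}\,\sigma_p(T)\}\) and \(E \neq \{0\},\) the domain \({\rm int}\,\sigma_p(T)\) cannot be empty (the empty set is total only in the zero space), so the requirement \({\rm int}\,\sigma_p(T) \neq \emptyset\) holds automatically.

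For the second requirement, I would fix an arbitrary open disc \(D \subseteq \sigma_p(T).\) Because \(D\) is open and contained in \(\sigma_p(T),\) it lies inside \({\rm int}\,\sigma_p(T),\) so \(h\) is defined and analytic on \(D\) with \(h(z) \in \ker(T-z)\backslash\{0\}\) there. Since \(\{h(z) : z \in D\} \subseteq \bigcup_{z \in D}\ker(T-z),\) it suffices to prove that the smaller family \(\{h(z) : z \in D\}\) is already total in \(E;\) then a fortiori the larger union is total.

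To this end I would take a bounded linear functional \(\phi \in E^\ast\) that annihilates \(\{h(z) : z \in D\}\) and show \(\phi = 0.\) The scalar function \(g(z) = \phi(h(z))\) is analytic on \({\rm int}\,\sigma_p(T),\) since \(h\) is a Banach-space-valued analytic map and \(\phi\) is continuous and linear, so the composition is analytic in the ordinary sense. By assumption \(g\) vanishes on the open disc \(D,\) so its zero set has nonempty interior, and the identity principle forces \(g \equiv 0\) on the connected component of \({\rm int}\,\sigma_p(T)\) containing \(D.\) Consequently \(\phi\) annihilates \(\{h(z) : z \in {\rm int}\,\sigma_p(T)\},\) which is total by hypothesis, and therefore \(\phi = 0.\) This shows \(\{h(z) : z \in D\}\) is total, completing the verification.

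The step to watch is the passage from vanishing on \(D\) to vanishing on the whole domain of \(h:\) the identity principle propagates a zero only across a \emph{connected} open set, so the argument goes through verbatim precisely when \({\rm int}\,\sigma_p(T)\) is connected. This is the situation in every application made in this paper, where the point spectrum is a disc, an annulus, or the analytic image of a disc, so connectedness is never an obstacle in practice. The only other routine ingredient is the analyticity of \(g = \phi\circ h,\) which is the standard fact that composing a Banach-space-valued analytic function with a bounded linear functional yields a scalar analytic function, so that the classical identity principle is available.
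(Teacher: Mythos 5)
Your proposal is correct and follows essentially the same route as the paper's own proof: annihilate the family \(\{h(z)\colon z\in D\}\) with a functional \(g^\ast\in E^\ast,\) observe that \(z\mapsto\langle h(z),g^\ast\rangle\) is analytic and vanishes on \(D,\) extend the vanishing by the identity principle, and invoke totality of \(\{h(z)\colon z\in{\rm int}\,\sigma_p(T)\}\) to force \(g^\ast=0.\) Your closing remark about connectedness is a genuine point of care: the paper's own analytic-continuation step silently assumes \({\rm int}\,\sigma_p(T)\) is connected (as it is in every application — a disc, an annulus, or the image of a disc), so you have made explicit a hypothesis that the paper leaves implicit rather than introduced any new gap.
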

\begin{proof}
Let \(D\) be an open disc contained in \(\sigma_p(T)\) and let \(g^\ast  \in E^\ast\) such that \(\langle h(z), g^\ast  \rangle =0\) for all \(z \in D.\) We must show that  then \(g^\ast=0.\) We consider the analytic function
 \( \varphi \colon {\rm int} \, \sigma_p(T)  \to \mathbb C \) defined by \(\varphi (z) =\langle h(z), g^\ast  \rangle.\)  We have by assumption that \(\varphi\) vanishes on \(D.\) Then,  it follows from the principle of analytic continuation that \(\varphi\) vanishes on \({\rm int}\, \sigma_p(T).\) Since  the family of eigenvectors  \(\{ h(z) \colon z \in  {\rm int}\, \sigma_p(T) \}\) is a total set, it follows that \(g^\ast=0,\) as we wanted.
\end{proof}

We finish this section with a more general formulation of Theorem \ref{key} for   intertwining operators.  

\begin{theorem} 
\label{intclos} Let \(T,S\) be two  bounded  linear operators on a  complex  Banach space,  and suppose that  
there is  some \(X\) that intertwines \(T,S,\)  that is, \(X \neq 0\)   and  \(XT=SX.\) If \(T\) has rich point spectrum then  
\[
{\rm int}\, \sigma_p(T) \subseteq {\rm clos} \, \sigma_p(S).
\]
\end{theorem}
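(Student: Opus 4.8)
The plan is to mimic the proof of Theorem~\ref{key}, of which the present statement is the natural generalization: Theorem~\ref{key} is recovered by taking \(S=\lambda T,\) since the relation \(TX=\lambda XT\) can be rewritten as \(XT=(\lambda^{-1}T)(\ldots)\) and, more to the point, the computation there uses the hypothesis only through an eigenvalue equation. The key observation is that in the proof of Theorem~\ref{key} the identity \(TXf=\lambda z\,Xf\) was what placed a scalar in the spectrum; here, replacing \(TX=\lambda XT\) by the intertwining relation \(XT=SX\) turns the analogous computation into an eigenvalue equation for \(S\) rather than for \(T,\) which is exactly what yields membership in \(\sigma_p(S)\) instead of \(\sigma_p(T).\)

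First I would fix a point \(z\in{\rm int}\,\sigma_p(T)\) and, for each \(n\in\mathbb N,\) choose \(n\) large enough that \(D(z,1/n)\subseteq\sigma_p(T).\) Since \(T\) has rich point spectrum, the family \(\bigcup_{w\in D(z,1/n)}\ker(T-w)\) is total in \(E.\) Because \(X\) is continuous and \(X\neq 0,\) it cannot annihilate a total set, so there exist \(z_n\in D(z,1/n)\) and \(f_n\in\ker(T-z_n)\backslash\{0\}\) with \(Xf_n\neq 0.\) Applying the intertwining relation to \(Xf_n\) and using \(Tf_n=z_nf_n\) gives
\[
S(Xf_n)=XTf_n=z_n\,Xf_n,
\]
and since \(Xf_n\neq 0\) this means that \(z_n\) is an eigenvalue of \(S,\) that is, \(z_n\in\sigma_p(S).\) Letting \(n\to\infty\) we have \(z_n\to z\) because \(z_n\in D(z,1/n),\) and since each \(z_n\) lies in \(\sigma_p(S)\) the limit \(z\) belongs to \({\rm clos}\,\sigma_p(S).\) As \(z\in{\rm int}\,\sigma_p(T)\) was arbitrary, this establishes the desired inclusion.

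I do not expect a serious obstacle here, as the argument is a routine adaptation of Theorem~\ref{key}; the only step that genuinely uses the hypothesis of rich point spectrum is the extraction of an eigenvector \(f_n\) not annihilated by \(X,\) which rests on the totality clause in the definition together with the continuity and nonvanishing of \(X.\) Everything else is a direct substitution into the intertwining identity followed by a limiting argument, so the proof should be short.
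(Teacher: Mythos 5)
Your proposal is correct and follows essentially the same argument as the paper's own proof: extract, for each \(n,\) an eigenvector \(f_n\in\ker(T-z_n)\backslash\{0\}\) with \(z_n\in D(z,1/n)\) and \(Xf_n\neq 0\) (using totality plus continuity of \(X\)), apply the intertwining relation to get \(SXf_n=z_nXf_n,\) and pass to the limit. In fact your write-up is slightly cleaner than the paper's, which contains a typo (\(f_n\in\ker(T-z)\) where \(\ker(T-z_n)\) is meant) and leaves implicit the observation that a nonzero bounded operator cannot annihilate a total set.
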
  
\begin{proof}
Let \(z \in {\rm int}\, \sigma_p(T)\) and let \(n \in \mathbb N\) such that \(D(z,1/n) \subseteq \sigma_p(T).\) Since \(X \neq 0\) and since \(T\) has rich point spectrum, there exist  \(z_n \in D(z,1/n)\)  and \(f_n \in \ker (T-z) \backslash \{0\}\) such that  \(Xf_n \neq 0.\) Hence,
\[
SXf_n= XTf_n=  z_n Xf_n,
\]
and since \(Xf_n \neq 0,\) this means that \( z_n \in \sigma_p(S).\) Taking limits as \(n \to \infty\) yields  \(z \in {\rm clos}\,\sigma_p(S).\) 
\end{proof}
Notice that Theorem \ref{key} becomes a special case of Theorem \ref{intclos}  since \(\lambda\) is an extended eigenvalue for \(T\) if and only if there is some non zero  operator that intertwines \(\lambda T\) and  \(T.\)

\section{The finite continuous Ces\`aro operator on Lebesgue spaces}
\label{cesp}
Now we focus on the extended eigenvalues and extended eigenoperators for the  Ces\`aro operator \(C_1\) defined on the Lebesgue spaces \(L^p[0,1]\) for \(1 < p < \infty\) by the integral means  (\ref{cesfin}).
Leibowitz \cite{L} showed that   \(C_1\) is indeed a bounded operator on \(L^p[0,1]\) and he computed its spectrum and its point spectrum. 

\begin{theorem}
\label{thm:suffic}
If \(0<\lambda \leq 1\) then \(\lambda\) is an extended eigenvalue for the Ces\`aro operator \(C_1\) on  \(L^p[0,1]\) and a corresponding extended eigenoperator is the weighted composition operator  \(X_0 \in \mathcal B(L^p[0,1])\) defined by
\begin{eqnarray}
\label{basic}
(X_0f)(x)= x^{(1-\lambda)/\lambda} f( x^{1/\lambda}).
\end{eqnarray}
\end{theorem}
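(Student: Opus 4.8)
The plan is to establish two facts about the weighted composition operator $X_0$: that it is bounded on $L^p[0,1]$, and that it satisfies the intertwining identity $C_1 X_0 = \lambda X_0 C_1$. Since $X_0$ is visibly nonzero (it sends the constant function $1$ to the nonzero function $x^{(1-\lambda)/\lambda}$), these two facts are exactly what is needed to conclude that $\lambda$ is an extended eigenvalue for $C_1$ with $X_0$ a corresponding extended eigenoperator. Both facts will be proved by the same change of variables, namely $u = x^{1/\lambda}$, equivalently $x = u^\lambda$ with $dx = \lambda u^{\lambda-1}\,du$, which carries $[0,1]$ onto itself.

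For boundedness I would compute
\[
\|X_0 f\|_p^p = \int_0^1 x^{p(1-\lambda)/\lambda}\,|f(x^{1/\lambda})|^p\,dx
\]
and apply the substitution above. Since $x^{p(1-\lambda)/\lambda} = u^{p(1-\lambda)}$, the integral becomes $\lambda \int_0^1 u^{(p-1)(1-\lambda)}\,|f(u)|^p\,du$ after collecting the exponent $p(1-\lambda)+(\lambda-1)=(p-1)(1-\lambda)$. The point I would emphasize is that $(p-1)(1-\lambda)\geq 0$ precisely because $p>1$ and $0<\lambda\leq 1$, so the weight $u^{(p-1)(1-\lambda)}$ is bounded by $1$ on $[0,1]$; this gives $\|X_0 f\|_p^p \leq \lambda\,\|f\|_p^p$ and hence $\|X_0\|\leq \lambda^{1/p}\leq 1$. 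This sign condition is the one genuinely delicate point: it is exactly what keeps the composition $f\mapsto f(x^{1/\lambda})$ from producing an unbounded operator when $\lambda$ is small, and it explains why the construction is restricted to $\lambda\in(0,1]$.

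For the intertwining identity I would evaluate both sides on an arbitrary $f\in L^p[0,1]$. A direct computation using $x^{(1-\lambda)/\lambda}\cdot x^{-1/\lambda}=x^{-1}$ gives
\[
(X_0 C_1 f)(x) = \frac{1}{x}\int_0^{x^{1/\lambda}} f(t)\,dt,
\]
while the substitution $t=y^{1/\lambda}$ applied to $(C_1 X_0 f)(x)=\tfrac{1}{x}\int_0^x y^{(1-\lambda)/\lambda}f(y^{1/\lambda})\,dy$ collapses the weight, the exponents cancelling as $(1-\lambda)+(\lambda-1)=0$, and yields
\[
(C_1 X_0 f)(x) = \frac{\lambda}{x}\int_0^{x^{1/\lambda}} f(t)\,dt = \lambda\,(X_0 C_1 f)(x).
\]
Comparing the two displays gives $C_1 X_0=\lambda X_0 C_1$, completing the argument. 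Beyond the boundedness subtlety noted above, the remaining steps are routine bookkeeping of exponents in the two substitutions, so I expect no serious obstacle.
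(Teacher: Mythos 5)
Your proposal is correct, and the boundedness half is exactly the paper's argument: the same substitution $u=x^{1/\lambda}$, the same exponent $(p-1)(1-\lambda)$, and the same bound $\|X_0\|\le\lambda^{1/p}$. Where you diverge is in the verification of the intertwining relation. The paper exploits the fact that the monomials are eigenvectors of $C_1$: it computes $X_0x^n=x^{(n+1-\lambda)/\lambda}$, observes that $C_1X_0x^n=\tfrac{\lambda}{n+1}X_0x^n=\lambda X_0C_1x^n$, and then invokes the density of ${\rm span}\,\{x^n\colon n\in\mathbb N\}$ in $L^p[0,1]$ to upgrade the identity on monomials to the operator identity $C_1X_0=\lambda X_0C_1$. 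You instead compute both $(X_0C_1f)(x)$ and $(C_1X_0f)(x)$ directly for an arbitrary $f\in L^p[0,1]$, using the change of variables $t=y^{1/\lambda}$ inside the integral; your exponent bookkeeping ($(1-\lambda)/\lambda-1/\lambda=-1$ on one side, $(1-\lambda)+(\lambda-1)=0$ on the other) is right, and the two sides match pointwise a.e. Your route is more self-contained, since it needs no density theorem and produces the pointwise identity for every $f$ at once; the paper's route trades that for an almost calculation-free verification, since on monomials both sides reduce to scalar multiples of the same function. Either argument is complete and correct.
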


\begin{proof} First of all, let us show  that \(X_0\) is  indeed a bounded linear operator. We have for every \(f \in L^p[0,1]\)
\begin{align*}
\int_0^1 |(X_0f)(x)|^p \, dx & = \int_0^1 x^{p(1-\lambda)/\lambda} |f(x^{1/\lambda})|^p \, dx\\
& = \lambda \int_0^1 y^{(p-1)(1-\lambda)} |f(y)|^p \, dy \leq \lambda \int_0^1 |f(y)|^p \,dy,
\end{align*}
and this shows that \(X_0\) is bounded on \(L^p[0,1]\) with \(\|X_0\| \leq \lambda^{1/p}.\)

Now let us  show that \(X_0\) is an extended eigenoperator of \(C_1\) associated with the extended eigenvalue~\(\lambda.\) Let \(n \in \mathbb N\) and notice that \(X_0x^n= x^{(n+1-\lambda)/\lambda} ,\) so that 
\begin{align*}
C_1X_0x^n  = C_1  x^{(n+1-\lambda)/\lambda}  = \frac{\lambda}{n+1} x^{(n+1-\lambda)/\lambda} =  \frac{ \lambda}{n+1} X_0x^n=\lambda X_0C_1 x^n,
\end{align*}
and since the linear subspace  \({\rm span}\, \{x^n \colon n \in \mathbb N\}\) is a dense subset of \(L^p[0,1],\) it follows that \(C_1X_0=\lambda X_0C_1,\) that is, \(X_0\) is an extended eigenoperator of \(C_1\) associated with the extended eigenvalue \(\lambda.\) 
\end{proof}
Our next goal is to  show that if \(\lambda\) is an extended eigenvalue of the finite continuous Ces\`aro operator \(C_1 \in \mathcal B(  L^p[0,1)])\) then \(\lambda\) is real and \(0 <\lambda \leq 1.\) First we show that \(C_1\) has rich point spectrum. Let  \(1<p,q<\infty\) be  a pair of conjugate indices, that is, 
\[
\frac{1}{p} + \frac{1}{q}=1.
\] 
Leibowitz \cite{L} proved  the following result about the point spectrum of \(C_1.\)
\begin{lemma}
\label{pointspect}
The point spectrum of the Ces\`aro operator \(C_1\) on  \(L^p[0,1]\) is the open disc \(D(q/2,q/2).\) Moreover, each \(z \in D(q/2,q/2)\) is a simple eigenvalue of \(C_1\) and a corresponding eigenfunction is given by \(h_z (x)=x^{(1-z)/z}.\)
\end{lemma}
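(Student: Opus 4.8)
The plan is to solve the eigenvalue equation $C_1 f = z f$ directly, reading off simultaneously the admissible values of $z$ and the corresponding eigenfunctions. First I would introduce the antiderivative $F(x) = \int_0^x f(t)\,dt$; since $f \in L^p[0,1] \subseteq L^1[0,1]$ on a finite measure space, the function $F$ is absolutely continuous with $F'=f$ almost everywhere and $F(0)=0$. The eigenvalue equation $\frac{1}{x}\int_0^x f = z f$ then becomes the first order linear differential equation
\[
F(x) = z\,x\,F'(x), \qquad x \in (0,1).
\]
The value $z=0$ is discarded at once, since it would force $F\equiv 0$ and hence $f=0$.

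For $z \neq 0$ the equation reads $F'(x) = F(x)/(zx)$, and by the standard uniqueness theorem for first order linear equations (applied to the absolutely continuous $F$, whose derivative exists a.e. and equals $F/(zx)$), its solutions on $(0,1)$ are exactly the constant multiples of $x^{1/z}$. Consequently $f = F'$ is a constant multiple of $x^{(1-z)/z} = h_z$, and this one-dimensionality of the solution space is precisely the assertion that each eigenvalue is simple. To confirm that $h_z$ is genuinely an eigenfunction I would compute the primitive directly: since $(1-z)/z + 1 = 1/z$, one has $\int_0^x t^{(1-z)/z}\,dt = z\,x^{1/z}$, so that $C_1 h_z = z\,h_z$.

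It remains to determine for which $z$ the candidate $h_z$ actually lies in $L^p[0,1]$, and this is where the only genuine computation occurs. Writing $|h_z(x)|^p = x^{\,p\,\mathrm{Re}((1-z)/z)}$ and testing integrability near the origin, the condition $\int_0^1 |h_z|^p\,dx < \infty$ is equivalent to $p\,\mathrm{Re}((1-z)/z) > -1$; and since $\mathrm{Re}((1-z)/z) = \mathrm{Re}(1/z) - 1$, this is in turn equivalent to
\[
\mathrm{Re}(1/z) > 1 - \tfrac{1}{p} = \tfrac{1}{q}.
\]

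The final step, which I expect to be the main obstacle only in the sense of bookkeeping, is to recognize this half plane condition in the reciprocal variable as a disc in the $z$ variable. Setting $z = a + ib$ gives $\mathrm{Re}(1/z) = a/(a^2+b^2)$, so the inequality $\mathrm{Re}(1/z) > 1/q$ rearranges (using $a^2+b^2>0$) to $a^2 - qa + b^2 < 0$, that is $(a - q/2)^2 + b^2 < (q/2)^2$, which describes exactly the open disc $D(q/2,q/2)$. Since for every $z$ the unique candidate eigenfunction is $h_z$, and $h_z$ fails to belong to $L^p[0,1]$ outside this disc, there are no eigenvalues beyond it. I therefore conclude that $\sigma_p(C_1) = D(q/2,q/2)$, that each such $z$ is a simple eigenvalue, and that a corresponding eigenfunction is $h_z(x) = x^{(1-z)/z}$.
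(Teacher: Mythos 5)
Your proof is correct, but there is nothing in the paper to compare it with: the paper does not prove Lemma~\ref{pointspect} at all, it simply quotes it as a known result of Leibowitz~\cite{L}. Your argument is the natural self-contained one, and it is essentially the classical proof of that result: pass to the primitive \(F(x)=\int_0^x f(t)\,dt,\) observe that the eigenvalue equation becomes \(F(x)=zxF'(x)\) almost everywhere, solve this equation within the class of absolutely continuous functions, and then decide for which \(z\) the resulting candidate \(h_z(x)=x^{(1-z)/z}\) belongs to \(L^p[0,1].\) The only delicate point is the uniqueness step, since \(F\) is merely absolutely continuous and the differential equation holds only a.e.; your appeal to uniqueness is legitimate and is most cleanly justified by the integrating factor \(G(x)=F(x)x^{-1/z},\) which is absolutely continuous on compact subintervals of \((0,1)\) with \(G'=0\) a.e., hence constant. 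This yields \(F(x)=Cx^{1/z},\) so the eigenspace is at most one dimensional (which is the asserted simplicity), and the membership criterion \(p\,\mathrm{Re}\bigl((1-z)/z\bigr)>-1,\) i.e.\ \(\mathrm{Re}(1/z)>1/q,\) identifies exactly the open disc \(D(q/2,q/2)\) --- the same half-plane/disc correspondence that the paper itself exploits later, in the proof of Theorem~\ref{Lp}. So your proposal is a complete and correct proof of the cited lemma, and it supplies an argument the paper leaves to the reference.
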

\noindent
The following theorem was conjectured by Borwein and Erd\'elyi~\cite{BE} and it was proven by Operstein~\cite{O}.

\begin{theorem} {\em (Full M\"untz theorem in \(L^p [0,1].\))}
\label{full}
 Let \(1 < p < \infty\) and let \((r_n)\) be a sequence of distinct real numbers greater than \(-1/p.\) Then  the linear subspace
\(
{\rm span}\, \{x^{r_0}, x^{r_1}, \ldots , x^{r_n} , \ldots \}
\)
is dense in \(L^p[0,1]\) if and only if
\begin{align}
\label{eqnmuntz}
\sum_{n=0}^\infty \frac{r_n+1/p}{(r_n+1/p)^2+1}=\infty. 
\end{align}
\end{theorem}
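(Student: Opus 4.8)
The plan is to prove the theorem by duality, reducing density of the span to the triviality of its annihilator in \(L^q[0,1],\) and then to convert the resulting analytic interpolation problem into a Blaschke condition on the unit disc. By the Hahn--Banach theorem the subspace \({\rm span}\,\{x^{r_n}\}\) is dense in \(L^p[0,1]\) if and only if the only \(g \in L^q[0,1]\) with \(\int_0^1 x^{r_n} g(x)\,dx = 0\) for every \(n\) is \(g = 0.\) To each such \(g\) I would attach the function
\[
F(s) = \int_0^1 x^s g(x)\,dx,
\]
which is holomorphic on the half plane \(\{{\rm Re}\,s > -1/p\},\) since \(x^s \in L^p[0,1]\) precisely there. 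H\"older's inequality together with \(\|x^\sigma\|_p = (p\sigma+1)^{-1/p}\) yields the growth bound \(|F(s)| \le \|g\|_q\,(p\,{\rm Re}\,s + 1)^{-1/p},\) and the hypothesis on \(g\) says exactly that \(F(r_n) = 0\) for all \(n.\)

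Next I would normalize by setting \(w = s + 1/p,\) so that \(G(w) := F(w - 1/p)\) is holomorphic on the right half plane \(\{{\rm Re}\,w > 0\},\) vanishes at the points \(w_n := r_n + 1/p > 0,\) and satisfies \(|G(w)| \le \|g\|_q\,(p\,{\rm Re}\,w)^{-1/p}.\) Transporting \(G\) to the disc through the Cayley transform \(\zeta = (w-1)/(w+1)\) gives a holomorphic function on \(\mathbb D\) whose zeros include \(\zeta_n = (w_n - 1)/(w_n + 1).\) The estimate blows up only logarithmically in \({\rm Re}\,w\) near the imaginary axis, so \(\log^+|G|\) is integrable against harmonic measure and the transported function lies in the Nevanlinna class of \(\mathbb D.\) A direct computation gives \(1 - \zeta_n^2 = 4w_n/(w_n+1)^2,\) whence
\[
\tfrac12\,\frac{4w_n}{(w_n+1)^2} \le 1 - |\zeta_n| \le \frac{4w_n}{(w_n+1)^2},
\]
and since \(w_n/(w_n+1)^2 \asymp w_n/(w_n^2+1)\) with constants in \([1/2,1],\) the Blaschke sum \(\sum (1 - |\zeta_n|)\) converges if and only if the M\"untz series \(\sum (r_n + 1/p)/((r_n + 1/p)^2 + 1)\) converges.

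For the divergence direction I would argue that if the M\"untz series diverges then the \(\zeta_n\) violate the Blaschke condition, so that a Nevanlinna-class function vanishing on them must be identically zero; hence \(F \equiv 0.\) Evaluating at the nonnegative integers gives \(\int_0^1 x^m g(x)\,dx = 0\) for all \(m \in \mathbb N,\) and since the polynomials are dense in \(L^p[0,1]\) this forces \(g = 0.\) Thus divergence of the series yields density of the span.

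For the convergence direction I would instead build a nonzero annihilator. When \(\sum (1 - |\zeta_n|) < \infty\) the Blaschke product with zeros \(\{\zeta_n\}\) is a nonzero bounded holomorphic function on \(\mathbb D;\) pulling it back through the Cayley transform produces a nonzero bounded holomorphic \(G\) on the right half plane vanishing at every \(w_n,\) and I would then realize \(F(s) = G(s + 1/p)\) as a Mellin transform \(\int_0^1 x^s g(x)\,dx\) of some nonzero \(g \in L^q[0,1],\) which annihilates the whole span so that it is not dense. I expect the genuine obstacle to be exactly this last step: identifying which half-plane functions of Nevanlinna or Hardy type actually arise as Mellin transforms of \(L^q\) functions, that is, inverting the transform and controlling the boundary behaviour. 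For \(p \neq 2\) there is no Hilbert-space orthogonal projection to lean on, and one must work with the \(H^p\) theory of the half plane and sharp boundary estimates; this is presumably the essential difficulty that lifts the classical \(L^2\) M\"untz theorem to the full \(L^p\) range established by Operstein.
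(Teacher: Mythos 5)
The first thing to note is that the paper does not prove this theorem at all: it is imported from the literature (conjectured by Borwein and Erd\'elyi, proved by Operstein, as the paper states just before the statement), so there is no internal proof to compare against. Your attempt is therefore a from-scratch reconstruction of a cited deep result. Its skeleton is the classical one and the routine parts are sound: the Hahn--Banach reduction to annihilators \(g \in L^q[0,1],\) the holomorphy of \(F(s)=\int_0^1 x^s g(x)\,dx\) on \(\{\mathrm{Re}\,s>-1/p\},\) the H\"older bound \(|G(w)|\le \|g\|_q\,(p\,\mathrm{Re}\,w)^{-1/p},\) and the computation identifying the Blaschke condition for \(\zeta_n=(w_n-1)/(w_n+1)\) with convergence of the M\"untz series are all correct.

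The genuine gap is the sentence asserting that the Cayley-transported function lies in the Nevanlinna class. Your only input is the growth bound, which on the disc reads \(|H(\zeta)|\le C\,(1-|\zeta|)^{-1/p}\) up to a harmless bounded factor; this is Bergman/Korenblum-type growth, not Nevanlinna-type. Under this bound the means \(\int_0^{2\pi}\log^+|H(re^{i\theta})|\,d\theta\) may grow like \(\tfrac1p\log\tfrac1{1-r},\) which is unbounded as \(r\to 1,\) so membership in \(N\) does not follow; worse, it is classical that analytic functions with such polynomial boundary growth (e.g.\ Bergman-space functions) can have zero sets violating the Blaschke condition, so no zero-counting argument can run on the growth bound alone. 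The claim can be rescued when \(q\le 2,\) i.e.\ \(p\ge 2\): writing \(G(w)=\int_0^\infty e^{-wt}\phi(t)\,dt\) with \(\phi(t)=g(e^{-t})e^{-t/q}\in L^q(0,\infty),\) Hausdorff--Young gives \(\sup_{\sigma>0}\|G(\sigma+i\cdot)\|_{L^p(\mathbb R)}\le\|\phi\|_q,\) so \(G\in H^p\) of the half plane, which is of bounded type and has Blaschke zeros. But for \(1<p<2\) (so \(q>2\)) no such estimate exists; the best soft substitute, \(L^q[0,1]\subset L^2[0,1],\) places \(G\) in \(H^2\) only of the strictly smaller half plane \(\mathrm{Re}\,s>-1/2,\) which misses exactly the zeros \(r_n\in(-1/p,-1/2].\) Since the M\"untz series can diverge solely because of exponents clustering at \(-1/p\) (take \(r_n=-1/p+1/n\)) --- and this clustering regime is precisely the one the paper needs, with \(r_n=(1-z_n)/z_n\to -1/p\) --- your divergence direction genuinely breaks for \(p<2.\) A symmetric difficulty afflicts your convergence direction for \(p>2\): there the annihilator must lie in \(L^q\) with \(q<2,\) an \(L^2\) annihilator cannot exist when the exponents cluster at \(-1/p>-1/2\) (the \(1/2\)-shifted Blaschke sum then diverges even though the \(1/p\)-shifted one converges), and Hausdorff--Young for the inverse Mellin transform points the wrong way. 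In short, your sketch proves the theorem only for \(p=2\) in both directions, and one direction each for \(p>2\) and \(p<2\); the two remaining hard cases are exactly Operstein's contribution and do not follow from the soft complex-analytic argument you outline.
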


\begin{theorem}
\label{real}
The finite continuous Ces\`aro operator  \(C_1\) on  \(L^p[0,1]\)  has rich point spectrum.
\end{theorem}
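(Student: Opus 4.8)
The plan is to apply the sufficient condition of Lemma~\ref{rich}. By Lemma~\ref{pointspect}, the point spectrum of \(C_1\) on \(L^p[0,1]\) is the open disc \(D(q/2,q/2),\) so \({\rm int}\,\sigma_p(C_1)=D(q/2,q/2)\) is nonempty, and each \(z\) in it is a simple eigenvalue with eigenfunction \(h_z(x)=x^{(1-z)/z}.\) The natural candidate for the analytic mapping required by Lemma~\ref{rich} is therefore \(h(z)=h_z,\) and I would verify its three properties: that \(h(z)\in\ker(C_1-z)\backslash\{0\}\) (immediate from Lemma~\ref{pointspect}), that \(z\mapsto h_z\) is analytic as an \(L^p\)-valued map, and that \(\{h_z:z\in D(q/2,q/2)\}\) is total in \(L^p[0,1].\)

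For the analyticity, I would first note that \(0\) lies on the boundary rather than in the interior of \(D(q/2,q/2),\) so the exponent \((1-z)/z=1/z-1\) is analytic on the open disc. Arguing exactly as in the proof of Theorem~\ref{Lp}, one has that \(z\in D(q/2,q/2)\) forces \({\rm Re}(1/z)>1/q,\) hence \({\rm Re}\,(1-z)/z>-1/p;\) since \(|h_z(x)|=x^{{\rm Re}\,(1-z)/z}\) and \(\int_0^1 x^{p\,{\rm Re}(1-z)/z}\,dx<\infty\) precisely when \({\rm Re}(1-z)/z>-1/p,\) each \(h_z\) indeed lies in \(L^p[0,1].\) Analyticity of \(z\mapsto h_z\) would then be obtained by checking weak analyticity: for each \(g\in L^q[0,1]\) the scalar function \(z\mapsto\int_0^1 x^{(1-z)/z}g(x)\,dx\) is analytic by differentiation under the integral sign, the dominating bounds being locally uniform on the disc, and strong analyticity follows since weak and strong analyticity coincide for Banach-space-valued functions.

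The main step, and the point at which the Full M\"untz theorem enters, is the totality. Here I would restrict attention to the real values \(z\in(0,q),\) the real diameter of \(D(q/2,q/2).\) For such \(z\) the exponent \(r(z)=(1-z)/z\) is real, and the continuous strictly decreasing map \(r\colon(0,q)\to\mathbb R\) is a bijection onto \((-1/p,+\infty)\) (as \(z\to 0^+\) one has \(r\to+\infty,\) and as \(z\to q^-\) one has \(r\to 1/q-1=-1/p\)). Thus the family \(\{h_z:z\in D(q/2,q/2)\}\) already contains every monomial \(x^r\) with \(r>-1/p.\) To see that these span a dense subspace, I would select a sequence of distinct reals \(r_n>-1/p\) satisfying the divergence condition~(\ref{eqnmuntz}) of Theorem~\ref{full}---for instance \(r_n=n,\) whose general term behaves like \(1/n\) so that the series diverges---whence \({\rm span}\,\{x^{r_n}:n\in\mathbb N\}\) is dense in \(L^p[0,1].\) Since a superset of a total set is total, it follows that \(\{h_z:z\in D(q/2,q/2)\}\) is total, and Lemma~\ref{rich} then yields that \(C_1\) has rich point spectrum. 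I expect the totality step to be the crux: everything else is bookkeeping once Lemma~\ref{pointspect} identifies the eigenfunctions, but it is the M\"untz criterion that converts the family of power functions into a genuinely dense span.
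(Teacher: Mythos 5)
Your proposal is correct and follows essentially the same route as the paper: Lemma~\ref{pointspect} supplies the analytic eigenfunction family \(h(z)(x)=x^{(1-z)/z}\) on \(D(q/2,q/2)\), the Full M\"untz theorem (Theorem~\ref{full}) gives totality, and Lemma~\ref{rich} concludes. The only difference is cosmetic---you verify the M\"untz condition with the integer exponents \(r_n=n\) (i.e.\ \(z_n=1/(n+1)\)), while the paper uses a sequence \(z_n\uparrow q\) with exponents accumulating at \(-1/p\)---and your added checks of \(L^p\)-membership and weak-to-strong analyticity are fine.
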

\begin{proof}
Notice that  \(\sigma_p(C_1) = D(q/2,q/2)\) is open and connected. Also, the mapping \(h \colon \sigma_p(C_1) \to L^p[0,1]\) defined by \(h(z)(x)= x^{(1-z)/z}\) is analytic, and \(h(z) \in \ker (C_1-z) \backslash \{0\}.\) It is a standard consequence of the full M\"untz theorem  that the family of eigenfunctions \(\{h(z) \colon z \in D(q/2,q/2)\}\) is  total  in \(L^p[0,1].\)   Indeed, it suffices to consider a sequence of distinct real numbers \((z_n)\) with \(q/2< z_n < q\) and such that \(\lim z_n =q\) as \(n \to \infty,\)  since   the sequence of exponents \(r_n=(1-z_n)/z_n\) clearly satisfies  the condition (\ref{eqnmuntz}). The  result now follows from Lemma \ref{rich}.
\end{proof}
 
 \begin{corollary}
 \label{thm:ext}
If \(\lambda\) is an extended eigenvalue for  \(C_1\) on  \(L^p[0,1]\) then \(\lambda \) is real and \(0 < \lambda \leq 1.\)
\end{corollary}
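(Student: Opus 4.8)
The plan is to obtain this as an immediate application of Theorem \ref{Lp}, since every ingredient it requires has already been assembled. Theorem \ref{Lp} asserts that whenever an operator $T$ has rich point spectrum and its point spectrum equals a disc $D(r,r)$ with $r>0$, each extended eigenvalue of $T$ must be real and lie in $(0,1]$. So I would simply verify that $C_1$ on $L^p[0,1]$ meets these two hypotheses and then invoke the theorem.

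First, the rich point spectrum hypothesis is supplied directly by Theorem \ref{real}, which establishes that $C_1$ on $L^p[0,1]$ has rich point spectrum. Second, Lemma \ref{pointspect} identifies $\sigma_p(C_1)$ as the open disc $D(q/2,q/2)$, where $q$ is the conjugate index of $p$. This is precisely of the form $D(r,r)$ with $r=q/2$, and since $1<q<\infty$ we have $r=q/2>0$. Thus both hypotheses of Theorem \ref{Lp} hold with the specific value $r=q/2$.

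Applying Theorem \ref{Lp} to $T=C_1$ then yields at once that any extended eigenvalue $\lambda$ of $C_1$ is real and satisfies $0<\lambda\leq 1$, which is exactly the assertion of the corollary. I do not expect any genuine obstacle at this stage: the substantive work has already been done in the preparatory results, namely the density argument for rich point spectrum (via the full M\"untz theorem in Theorem \ref{real}) and the geometric iteration argument in Theorem \ref{Lp} that forces $\mu=1/\lambda$ to be real and at least $1$. The only thing to check is the purely bookkeeping observation that $\sigma_p(C_1)$ is a disc through the origin of the required shape, so that Theorem \ref{Lp} applies verbatim. Combined with Theorem \ref{thm:suffic}, which exhibits every $\lambda\in(0,1]$ as an extended eigenvalue, this completes the description of the extended eigenvalues of $C_1$ as the interval $(0,1]$.
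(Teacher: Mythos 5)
Your proposal is correct and coincides exactly with the paper's own proof: the authors likewise obtain the corollary by invoking Theorem \ref{Lp}, using Theorem \ref{real} for the rich point spectrum of \(C_1\) and Lemma \ref{pointspect} for the identification \(\sigma_p(C_1)=D(q/2,q/2)\). No gaps here.
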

\begin{proof}
This is a consequence of Theorem \ref{Lp} now that we know that \(C_1\) has rich point spectrum and that its point spectrum is the open disc \(D(q/2,q/2).\)
\end{proof}





%

\section{The discrete Ces\`aro operator on sequence spaces}
\label{sect:disc}
We  shall  prove  in this section that the set of the extended eigenvalues for the discrete Ces\`aro operator is the interval \([1,\infty)\) when \(p=2\) and that  it is contained in the interval \([1,\infty)\) when \(1<p<\infty.\)  
Let us recall that the discrete Ces\`aro operator \(C_0\) is defined on the complex Banach space \(\ell^p\)  by   the sequence of arithmetic means (\ref{cesdisc}). Rhoades \cite{R}  showed that \(C_0\) is  indeed a bounded linear operator whose point spectrum is empty and he proved the following result about the point spectrum of the adjoint operator \(C_0^\ast.\)
\begin{theorem}
\label{catch}
The point spectrum of \(C_0^\ast\) on the complex Banach space \(\ell^q\) is the open disc \(D(q/2,q/2).\) Moreover, every \(z \in D(q/2,q/2)\) is a simple eigenvalue for \(C_0^\ast\) and a corresponding eigenvector is the  sequence  \( h(z)=(h_n(z))_{n \in \mathbb N}\) defined by the relations
\begin{align}
\label{eigendisc2}
h_0(z)=1,\qquad h_n(z)= \prod_{k=1}^n \left (1 - \frac{1}{k z} \right ) \quad \text{for }n \geq 1.
\end{align}
\end{theorem}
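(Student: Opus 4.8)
The plan is to realize $C_0^\ast$ explicitly and then reduce the eigenvalue equation to a first order recursion whose solution is forced to be the stated product. Since $C_0$ is given on $\ell^p$ by the lower triangular matrix with entries $1/(n+1)$ for $0\le k\le n,$ its adjoint acts on $\ell^q$ by the tail average
\[
(C_0^\ast g)(n)=\sum_{k=n}^\infty \frac{g(k)}{k+1},
\]
and this series converges absolutely for $g\in\ell^q$ by H\"older's inequality, because $(1/(k+1))_k\in\ell^p$ for $p>1.$ Writing $S_n=\sum_{k\ge n} g(k)/(k+1),$ the equation $C_0^\ast g=z g$ becomes $S_n=z\,g(n)$ for every $n.$ First I would subtract consecutive equations and use $S_n-S_{n+1}=g(n)/(n+1)$ to obtain, for $z\neq 0,$ the recursion $g(n+1)=\bigl(1-1/(z(n+1))\bigr)g(n).$ This already shows that a nonzero eigenvector is determined up to a scalar by $g(0),$ hence that each eigenvalue is simple, and that after normalizing $g(0)=1$ the only candidate is $g(n)=h_n(z).$

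Conversely, I would check that $h(z)$ is a genuine eigenvector once it lies in $\ell^q$: the recursion is equivalent to $S_n-z\,h_n(z)=S_{n+1}-z\,h_{n+1}(z),$ so this quantity is constant in $n,$ and since both $S_n\to 0$ (tail of a convergent series) and $h_n(z)\to 0,$ the constant is $0$ and $S_n=z\,h_n(z).$ The value $z=0$ is excluded because then the equation forces $g(n)/(n+1)=0,$ so $g=0.$

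The heart of the argument is the asymptotic size of the product. Taking logarithms,
\[
\log|h_n(z)|=\sum_{k=1}^n {\rm Re}\,\log\Bigl(1-\frac{1}{zk}\Bigr),
\]
and since ${\rm Re}\,\log(1-1/(zk))=-{\rm Re}(1/z)/k+O(1/k^2),$ the correction terms are summable and $\log|h_n(z)|=-{\rm Re}(1/z)\log n+C_z+o(1).$ Equivalently one may write $h_n(z)=\Gamma(n+1-1/z)/\bigl(\Gamma(1-1/z)\Gamma(n+1)\bigr)$ and invoke $\Gamma(n+b)/\Gamma(n+c)\sim n^{b-c}.$ Either way $|h_n(z)|\sim C_z\,n^{-{\rm Re}(1/z)},$ so $\sum_n|h_n(z)|^q$ converges if and only if $q\,{\rm Re}(1/z)>1,$ that is, ${\rm Re}(1/z)>1/q.$ (At the exceptional points $z=1/k$ a factor vanishes, $h(z)$ is finitely supported, and these points lie inside the disc in any case.)

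Finally I would translate the half plane condition into the disc. The inversion $w=1/z$ carries $D(q/2,q/2)$ onto $\{\,{\rm Re}\,w>1/q\,\}$: its boundary circle, which passes through $0$ and $q,$ maps to the vertical line ${\rm Re}\,w=1/q,$ and the interior maps to the interior. Hence ${\rm Re}(1/z)>1/q$ is exactly $z\in D(q/2,q/2),$ so $h(z)$ is a nonzero $\ell^q$ eigenvector precisely for such $z,$ and it is unique up to scalar by the recursion, giving simplicity. For $z$ on or outside the boundary circle one has ${\rm Re}(1/z)\le 1/q$ and $\sum_n|h_n(z)|^q$ diverges, since on the circle $|h_n(z)|\sim C_z\,n^{-1/q}$ and the series behaves like the harmonic series, so there is no eigenvector there and $\sigma_p(C_0^\ast)=D(q/2,q/2)$ exactly. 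I expect the main obstacle to be making the product asymptotic precise enough to decide membership on the boundary circle, since that is where the open versus closed distinction is actually settled.
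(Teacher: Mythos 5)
Your proof is correct, but there is nothing in the paper to compare it against: Theorem \ref{catch} is stated as a quoted result of Rhoades \cite{R}, and the paper gives no proof of it. Your argument therefore supplies something the paper deliberately outsources, and it is sound as written. The route — realizing the Banach-space adjoint as the tail average \((C_0^\ast g)(n)=\sum_{k\ge n} g(k)/(k+1)\) (absolutely convergent by H\"older since \((1/(k+1))_k\in\ell^p\)), reducing \(C_0^\ast g=zg\) to the first-order recursion \(g(n+1)=\bigl(1-\tfrac{1}{z(n+1)}\bigr)g(n)\) (which simultaneously yields simplicity and identifies the unique candidate \(h(z)\)), recovering the full eigenvalue equation from the recursion by observing that \(S_n-z\,h_n(z)\) is constant and tends to \(0\), and finally deciding \(h(z)\in\ell^q\) via the product asymptotics together with the M\"obius identity \(z\in D(q/2,q/2)\iff {\rm Re}(1/z)>1/q\) — is complete, including the exceptional points \(z=1/k\) (finitely supported, nonzero since \(h_0=1\), and inside the disc) and the exclusion of \(z=0\). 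Your closing worry about the boundary circle is in fact already settled by your own estimate: because \(\log(1-1/(zk))+1/(zk)=O(1/k^2)\) is absolutely summable and no factor vanishes for \(z\) on or outside the boundary, the constant \(C_z\) in \(|h_n(z)|\sim C_z\,n^{-{\rm Re}(1/z)}\) is finite and strictly positive, so the asymptotic is genuinely two-sided and the series \(\sum_n|h_n(z)|^q\) diverges exactly when \(q\,{\rm Re}(1/z)\le 1\), the borderline case behaving like the harmonic series. The only cosmetic caveat is the adjoint convention: you use the bilinear (transpose) pairing, which is what makes the eigenvalue \(z\) rather than \(\overline{z}\) appear, and this matches the statement of the theorem.
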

Our first goal  is to show that if \(\lambda\) is an extended eigenvalue for \(C_0\) on \(\ell^p\)  then \(\lambda\) is real and \(\lambda \geq 1.\)  Notice that the method  that we applied  to \(C_1\) in section \ref{sect:rich} does not apply to \(C_0\)   because the point spectrum of \(C_0\) is empty. We consider instead its adjoint \(C_0^\ast.\)
\begin{theorem}
The adjoint of the discrete Ces\`aro operator \(C_0^\ast \in \mathcal B(\ell^q)\) has rich point spectrum.
\end{theorem}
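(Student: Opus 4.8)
The plan is to verify the hypotheses of Lemma \ref{rich} for \(T=C_0^\ast\) using the eigenvector field furnished by Theorem \ref{catch}. By Theorem \ref{catch} we have \(\sigma_p(C_0^\ast)=D(q/2,q/2)\), which is open and connected, so that \({\rm int}\,\sigma_p(C_0^\ast)=D(q/2,q/2)\neq\emptyset\). Define \(h\colon D(q/2,q/2)\to\ell^q\) by \(h(z)=(h_n(z))_{n\in\mathbb N}\) with the \(h_n(z)\) given by (\ref{eigendisc2}). Again by Theorem \ref{catch}, \(h(z)\in\ker(C_0^\ast-z)\setminus\{0\}\) for every \(z\) in the disc, the nonvanishing being clear from \(h_0(z)=1\). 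Thus only two things remain to be checked: that \(h\) is analytic as an \(\ell^q\)-valued map, and that \(\{h(z)\colon z\in D(q/2,q/2)\}\) is a total subset of \(\ell^q\).

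For the analyticity, each coordinate \(z\mapsto h_n(z)=\prod_{k=1}^n(1-1/(kz))\) is a rational function whose only singularity is at \(z=0\), and since \(0\) lies on the boundary rather than the interior of \(D(q/2,q/2)\), each \(h_n\) is analytic on the whole disc. To upgrade this to norm analyticity I would first note that \(h\) is locally bounded: on a compact set \(K\subseteq D(q/2,q/2)\) one has \({\rm Re}(1/z)\geq 1/q+\delta\) for some \(\delta>0\), and an elementary estimate of the product gives \(|h_n(z)|=O(n^{-{\rm Re}(1/z)})\) uniformly on \(K\), whence \(\sup_{z\in K}\sum_{n=0}^\infty|h_n(z)|^q<\infty\). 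Moreover, for each \(g^\ast\in\ell^p=(\ell^q)^\ast\) the pairing \(\langle h(z),g^\ast\rangle=\sum_n h_n(z)g^\ast_n\) is, by H\"older's inequality and the local bound, a locally uniform limit of its analytic partial sums, hence analytic; thus \(h\) is locally bounded and weakly analytic, and therefore strongly analytic.

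The totality is the conceptual heart of the argument, and the key idea is to test a potential annihilator only against the special points \(z=1/m\), \(m=1,2,\dots\), all of which lie on the real diameter \((0,q)\) of the disc and hence inside \(D(q/2,q/2)\). Suppose \(g=(g_n)\in\ell^p\) satisfies \(\langle h(z),g\rangle=0\) for all \(z\in D(q/2,q/2)\). At \(z=1/m\) we have \(h_n(1/m)=\prod_{k=1}^n(1-m/k)\), whose factor at \(k=m\) vanishes, so \(h_n(1/m)=0\) for every \(n\geq m\); consequently \(\langle h(1/m),g\rangle=\sum_{n=0}^{m-1}h_n(1/m)g_n\) is a finite sum. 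This triangular structure permits a recursive solution: testing at \(z=1\) gives \(g_0=0\), and if \(g_0=\dots=g_{m-1}=0\) then testing at \(z=1/(m+1)\) leaves \(h_m(1/(m+1))\,g_m=0\) with \(h_m(1/(m+1))=\prod_{k=1}^m(1-(m+1)/k)\neq 0\), forcing \(g_m=0\). By induction \(g=0\), so by the Hahn--Banach theorem the family \(\{h(z)\colon z\in D(q/2,q/2)\}\) is total in \(\ell^q\).

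With all three hypotheses in hand, Lemma \ref{rich} yields that \(C_0^\ast\) has rich point spectrum. The only genuinely technical point is the passage from coordinatewise to \(\ell^q\)-valued analyticity, which I expect to be the main obstacle; the totality, although it carries the substantive content, collapses to an elementary finite triangular system precisely because \(h_n(1/m)\) vanishes for \(n\geq m\).
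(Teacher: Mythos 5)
Your proposal is correct and follows essentially the same route as the paper: both verify the hypotheses of Lemma \ref{rich} for the eigenvector field \(h(z)=(h_n(z))\) of Theorem \ref{catch} on \(D(q/2,q/2)\), and both establish totality through the triangular structure at the points \(z=1/m\), namely that \(h_n(1/m)\neq 0\) if and only if \(n<m\). The paper leaves the analyticity of \(h\) and the resulting induction as ``easy to see'' and ``a standard fact''; your write-up simply supplies those details (local bound \(|h_n(z)|=O(n^{-\mathrm{Re}(1/z)})\), weak-plus-locally-bounded implies strong analyticity, and the recursive annihilator argument), all of which are sound.
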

\begin{proof}
Notice that  \(\sigma_p(C_0^\ast) = D(q/2,q/2)\) is open and connected. It is easy to see that the mapping \(h \colon \sigma_p(C_0^\ast) \to \ell^q\) defined by equation (\ref{eigendisc2}) is analytic, and \(h(z) \in \ker (C_0^\ast -z) \backslash \{0\}.\) It is a standard fact that the family of eigenvectors \(\{h(z) \colon z \in D(q/2,q/2)\}\) is  total  in \(\ell^q.\) As a matter of fact,   the family of eigenvectors \(\{f(1/k) \colon k \in \mathbb N\}\) is  total  in \(\ell^q,\) because \(f_n(1/k) \neq 0\) if and only if  \(n < k.\) The  result now follows at once from Lemma \ref{rich}.
\end{proof}

\begin{corollary} If \(\lambda\) is an extended eigenvalue of \(C_0\) on \(\ell^p\)  then \(\lambda \) is real and \(\lambda \geq 1.\)
\label{thm:disc}
\end{corollary}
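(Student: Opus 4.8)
The plan is to transfer the statement from $C_0$ to its adjoint $C_0^\ast$, for which the hypotheses of Theorem~\ref{Lp} have just been verified, and then simply read off the conclusion. The crux is the elementary duality principle that $\lambda$ is an extended eigenvalue of $C_0$ exactly when $1/\lambda$ is an extended eigenvalue of $C_0^\ast$; the inversion $\lambda \mapsto 1/\lambda$ is precisely what turns the interval $(0,1]$ obtained for $C_0^\ast$ into the interval $[1,\infty)$ asserted for $C_0$.

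To carry this out, suppose $X \in \mathcal B(\ell^p)$ is nonzero with $C_0 X = \lambda X C_0$. First I would rule out $\lambda = 0$: Rhoades showed that the point spectrum of $C_0$ is empty, so in particular $C_0$ is injective, and hence $C_0 X = 0$ would force $X = 0$, contrary to assumption. Thus $\lambda \neq 0$ and $1/\lambda$ is well defined. Passing to Banach-space adjoints, and using that the adjoint operation is linear and reverses the order of products, the relation $C_0 X = \lambda X C_0$ becomes $X^\ast C_0^\ast = \lambda\, C_0^\ast X^\ast$, which I rearrange as
\[
C_0^\ast X^\ast = \tfrac{1}{\lambda}\, X^\ast C_0^\ast .
\]
Since the adjoint operation is isometric we have $\|X^\ast\| = \|X\| \neq 0$, so $X^\ast \neq 0$. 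Therefore $X^\ast$ is an extended eigenoperator of $C_0^\ast$ corresponding to the extended eigenvalue $1/\lambda$.

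Finally I would invoke Theorem~\ref{Lp} with $T = C_0^\ast$. We have just shown that $C_0^\ast$ has rich point spectrum, and by Theorem~\ref{catch} its point spectrum is the disc $D(q/2, q/2)$, so the hypotheses of Theorem~\ref{Lp} hold with $r = q/2 > 0$. That theorem then gives that $1/\lambda$ is real with $0 < 1/\lambda \leq 1$, which is equivalent to $\lambda$ being real with $\lambda \geq 1$, as claimed. I do not expect any serious obstacle here: all of the analytic work — the richness of $\sigma_p(C_0^\ast)$ and the iteration argument forcing reality and the quantitative bound — is already packaged into Theorem~\ref{Lp}. The only point requiring care is the duality bookkeeping, namely checking that it is the reciprocal $1/\lambda$, and not $\lambda$ itself, that is the relevant extended eigenvalue of the adjoint.
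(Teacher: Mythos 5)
Your proof is correct and follows essentially the same route as the paper: rule out $\lambda=0$ via injectivity of $C_0$, pass to the adjoint to see that the reciprocal is an extended eigenvalue of $C_0^\ast$, and apply Theorem~\ref{Lp} using the rich point spectrum of $C_0^\ast$ and Theorem~\ref{catch}. The only cosmetic difference is that the paper, using the sesquilinear duality pairing, transfers $\lambda$ to $1/\overline{\lambda}$ rather than your $1/\lambda$ (the linear Banach-space adjoint convention); both conventions are legitimate and yield the same conclusion, since in either case Theorem~\ref{Lp} forces the transferred value to be real and in $(0,1]$, hence $\lambda$ real with $\lambda\geq 1$.
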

\begin{proof}  First of all, we have \(\lambda \neq 0\) because  \(C_0\) is injective. Also, notice that  \(\lambda \)  is an extended eigenvalue for \(C_0\) if and only if \(1/ \overline{\lambda}\) is an extended eigenvalue for \(C_0^\ast,\) and therefore it is enough to show that if \(\lambda\) is an extended eigenvalue for \(C_0^\ast\) then \(\lambda\) is real and  \(0 <\lambda \leq 1.\) 
This  becomes a consequence of Theorem~\ref{Lp}  now that we know that \(C_0^\ast\) has rich point spectrum and that its point spectrum is the  disc \(D(q/2,q/2).\)  
\end{proof}
Our next goal  is to show in the Hilbertian case \(p=2\) that if \(\lambda\) is real and \(\lambda \geq 1\) then \(\lambda\) is an extended eigenvalue for~\(C_0.\)  
Kriete and Trutt \cite{KT1} showed  that \(C_0\) is subnormal using the following construction.
 Let \(\mu\) be a positive finite  measure defined on the Borel subsets of the complex plane with compact support
  and let \(H^2(\mu)\) be the closure of the polynomials on the Hilbert space \(L^2(\mu).\) Consider the shift operator \(M_z \) defined on the Hilbert space \(H^2(\mu)\) by the expression \((M_zf)(z)=zf(z).\) 
Kriete and Trutt \cite{KT1} showed that there is a   is  a positive finite  measure defined on the Borel subsets of the complex plane and supported on~\(\overline{\mathbb D},\) and there is a unitary operator \(U \colon \ell^2 \to H^2(\mu) \)  such that  
\[
I-C_0 = U^\ast M_z U,
\]
or in other words
\[
C_0= U^\ast(I-M_z)U.
\]
Then,  the extended eigenvalues of \(C_0\) are the extended eigenvalues of \(I-M_z\) and the corresponding extended eigenoperators of \(C_0\) are in one to one correspondence with the extended eigenoperators of \(I-M_z\) under conjugation with \(U,\) that is, if  a non-zero operator \(X\) satisfies \((I-M_z)X=\lambda X(I-M_z)\) then the operator \(Y=U^\ast XU\) satisfies \(C_0Y=\lambda YC_0.\)

\begin{theorem}
\label{thm:comp} If \(\lambda \geq 1\) then \(\lambda \) is an extended eigenvalue for \(I-M_z\) and a corresponding extended eigenoperator  is the composition operator \(X\) defined by the expression
\begin{eqnarray}
\label{eqn:comp}
(Xf)(z)= f \left (\frac{\lambda -1}{\lambda}+ \frac{z}{\lambda} \right ).
\end{eqnarray}
\end{theorem}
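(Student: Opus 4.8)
The plan is to verify the two defining properties of an extended eigenoperator in turn: first the algebraic intertwining relation $(I-M_z)X=\lambda X(I-M_z)$ on the dense subspace of polynomials, and then the boundedness of $X$ on $H^2(\mu)$, which is the genuine content and is what ultimately forces $\lambda\geq 1$.

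For the intertwining relation, the decisive observation is that the symbol $\phi(z)=\tfrac{\lambda-1}{\lambda}+\tfrac{z}{\lambda}$ can be rewritten as $\phi(z)=1+\tfrac{z-1}{\lambda}$, the dilation of ratio $1/\lambda$ centred at the fixed point $z=1$, so that
\[
\lambda\,(1-\phi(z))=\lambda\cdot\frac{1-z}{\lambda}=1-z.
\]
Since $I-M_z$ acts as multiplication by $1-z$, a direct computation gives $(I-M_z)Xf$ as the function $z\mapsto(1-z)f(\phi(z))$, while $\lambda X(I-M_z)f$ is the function $z\mapsto\lambda\,(1-\phi(z))\,f(\phi(z))$; by the displayed identity these coincide. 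This holds for every $f$ in the domain, in particular on polynomials, and since $X1=1\neq 0$ the operator $X$ is nonzero.

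The main obstacle is the boundedness of $X$. First I would check that $\phi$ maps $\overline{\mathbb D}$ into itself: with $c=1-1/\lambda$ and $\rho=1/\lambda$ the image is the closed disc $\overline{D(c,\rho)}$, and since $c,\rho\geq 0$ with $c+\rho=1$ one has $|c+\rho e^{i\psi}|^2\leq(c+\rho)^2=1$, so $\phi(\overline{\mathbb D})\subseteq\overline{\mathbb D}$, internally tangent at $z=1$. Hence $X$ carries polynomials to polynomials, and to control norms I would invoke the change of variables
\[
\|Xf\|_{L^2(\mu)}^2=\int_{\overline{\mathbb D}}|f\circ\phi|^2\,d\mu=\int_{\overline{\mathbb D}}|f|^2\,d(\phi_\ast\mu),
\]
where $\phi_\ast\mu$ is the pushforward. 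The boundedness of $X$ on all of $L^2(\mu)$, and a fortiori on $H^2(\mu)$, would then follow at once from a domination $\phi_\ast\mu\leq C\mu$ with $C=C(\lambda)$.

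Establishing $\phi_\ast\mu\leq C\mu$ is where the explicit structure of the Kriete--Trutt measure must enter, and I expect this to be the delicate step, precisely because $\phi(\overline{\mathbb D})$ touches $\partial\mathbb D$ at the boundary fixed point $z=1$: one must check that the contraction toward $z=1$ does not concentrate $\mu$-mass faster than $\mu$ permits. Writing $d\mu=W\,dA$ on the absolutely continuous part, the affine change of variables gives $\tfrac{d(\phi_\ast\mu)}{d\mu}(w)=\lambda^2\,W(\phi^{-1}(w))/W(w)$ on $\overline{D(c,\rho)}$, and I would bound this ratio using the growth of the weight $W$ near $z=1$ (with the singular part of $\mu$, if any, treated separately). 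The restriction $\lambda\geq 1$ should be exactly what makes this domination hold; for $\lambda<1$ it must fail, in agreement with Corollary~\ref{thm:disc}, which already rules out such $\lambda$ as extended eigenvalues of $C_0$.
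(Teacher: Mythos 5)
Your intertwining computation is the paper's proof, merely rephrased: the paper sets \(f_n = Xz^n\) and derives the recursion \((I-M_z)f_n = \lambda(f_n - f_{n+1})\) from \(\lambda f_{n+1} = [(\lambda-1)+M_z]f_n,\) which is exactly your identity \(\lambda\,(1-\phi(z)) = 1-z\) specialized to monomials; both arguments then conclude by totality of \(\{z^n \colon n \in \mathbb N\}\) in \(H^2(\mu).\) So on this part you and the paper coincide.

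Where you diverge is that the paper's proof \emph{ends} there: it never addresses whether the composition operator \(X\) is bounded on \(H^2(\mu).\) Your instinct that this is "the genuine content" is legitimate --- boundedness is required both by the definition of an extended eigenoperator (\(X\) must lie in \(\mathcal B(H^2(\mu))\)) and, strictly speaking, by the final density step itself, since one cannot pass from the identity \((I-M_z)Xz^n = \lambda X(I-M_z)z^n\) on a total set to the operator identity \((I-M_z)X = \lambda X(I-M_z)\) unless both sides are known to be continuous. But your proposal does not close this step: you reduce it to the pushforward domination \(\phi_\ast\mu \leq C\mu\) and then defer the key estimate to unspecified properties of the Kriete--Trutt measure, whose explicit form you never use. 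Two remarks on that reduction. First, the domination of measures is sufficient but stronger than needed: since \(H^2(\mu)\)-norms only test polynomials, what is actually required is \(\int |p|^2\, d(\phi_\ast\mu) \leq C \int |p|^2\, d\mu\) for all polynomials \(p,\) which can hold even when the measures themselves are not comparable (for instance, \(\phi_\ast\mu\) charges the set \(\phi(\overline{\mathbb D}),\) which need not be dominated pointwise by \(\mu\)). Second, your Radon--Nikodym formula presumes an absolutely continuous structure for \(\mu\) that would itself have to be extracted from Kriete--Trutt. In short: the part of your proposal that is complete is identical to the paper's proof, and the part that is incomplete is a gap the paper shares, since it silently takes the boundedness of \(X\) for granted; you have identified a real issue rather than created one, but you have not resolved it.
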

\begin{proof} Let \(f_n=Xz^n= \displaystyle{\left (\frac{\lambda -1}{\lambda}+ \frac{z}{\lambda} \right )^n}.\) We have \(\displaystyle{f_{n+1} =\left (\frac{\lambda -1}{\lambda}+ \frac{z}{\lambda} \right )f_n }\) so that
\begin{align*}
\lambda f_{n+1} & = [(\lambda -1) +M_z]f_n\\
& = \lambda f_n-(I-M_z)f_n
\end{align*}
and it follows that
\begin{align*}
 (I-M_z)f_n = \lambda(f_n-f_{n+1}) 
\end{align*}
so that 
\begin{align*}
(I-M_z)Xz^n  & = (I-M_z)f_n \\
& = \lambda(f_n-f_{n+1}) \\
& = \lambda( Xz^n -XM_zz^n)\\
& = \lambda X(I-M_z)z^n,
\end{align*}
and since the family of monomials \(\{z^n \colon n \in \mathbb N\}\) is a total set  in \(H^2(\mu),\) it follows that \((I-M_z)X=\lambda X(I-M_z).\)
\end{proof}
\begin{corollary} If \(\lambda \geq 1\) then \(\lambda\) is an extended eigenvalue for  the  discrete Ces\`aro operator \(C_0\) on  \(\ell^2.\)
\end{corollary}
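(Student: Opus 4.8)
The plan is simply to transport Theorem~\ref{thm:comp} back to \(C_0\) through the Kriete--Trutt unitary equivalence. Recall from the discussion preceding Theorem~\ref{thm:comp} that \(C_0 = U^\ast (I - M_z) U\) for a unitary operator \(U \colon \ell^2 \to H^2(\mu),\) and that extended eigenvalues are invariant under unitary equivalence. Concretely, if \(X\) is a nonzero operator with \((I-M_z)X = \lambda X (I-M_z),\) then \(Y = U^\ast X U\) is nonzero (because \(U\) is invertible) and a direct substitution using \(C_0 = U^\ast(I-M_z)U\) gives \(C_0 Y = \lambda Y C_0,\) with the very same scalar~\(\lambda.\)

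With this correspondence in hand the argument is immediate. Given \(\lambda \geq 1,\) Theorem~\ref{thm:comp} provides a nonzero bounded extended eigenoperator for \(I - M_z\) associated with \(\lambda,\) namely the composition operator \(X\) of equation~(\ref{eqn:comp}). Setting \(Y = U^\ast X U\) then yields a nonzero operator satisfying \(C_0 Y = \lambda Y C_0,\) which is exactly the statement that \(\lambda\) is an extended eigenvalue for \(C_0\) on \(\ell^2.\)

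I do not expect any genuine obstacle here, since all of the substantive work has already been done. The analytic content --- the boundedness of the composition operator on \(H^2(\mu)\) and the intertwining relation \((I-M_z)X = \lambda X(I-M_z)\) --- is supplied by Theorem~\ref{thm:comp}, while the representation of \(C_0\) as \(U^\ast(I-M_z)U\) is the Kriete--Trutt construction. The only point to verify, and it is entirely routine, is that conjugation by the unitary \(U\) preserves nonvanishing, boundedness, and the scalar \(\lambda,\) so that the extended eigenoperator for \(I-M_z\) transfers faithfully to one for \(C_0.\) In short, the corollary is a one-line consequence of Theorem~\ref{thm:comp} together with the unitary equivalence, and the whole task reduces to invoking the similarity invariance of extended eigenvalues already recorded above.
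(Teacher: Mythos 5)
Your proposal is correct and is exactly the argument the paper intends: the corollary follows from Theorem~\ref{thm:comp} combined with the Kriete--Trutt unitary equivalence \(C_0 = U^\ast(I-M_z)U\) and the similarity invariance of extended eigenvalues, which the paper records in the paragraph preceding that theorem. There is nothing to add; the paper leaves this transfer implicit, and your write-up simply makes it explicit.
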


\section{Extended eigenvalues for bilateral weighted shifts}
\label{sec:bilateral}
The third author \cite{P2} showed that the set of  extended eigenvalues for an injective unilateral weighted  shift is either \(\mathbb C \backslash \mathbb D\) or \(\mathbb C \backslash \{0\}.\) We consider in this section the extended eigenvalues for  a bilateral weighted shift  \(W\) on an infinite dimensional, separable complex Hilbert space \(H,\)  that is,
\begin{align}
We_n = w_n e_{n+1}, \qquad n \in \mathbb Z,
\end{align}
where \((e_n)_{n\in \mathbb Z}\) is an orthonormal basis of \(H\) and  the  sequence  \( (w_n)_{n \in \mathbb Z}\)  of non-zero weights  is bounded. 
\begin{theorem}
\label{sim}
Let us suppose that  an operator \(T\) on a complex Banach space is similar to \(\alpha T\) for some complex number \(\alpha.\) If \(\lambda\) is an extended eigenvalue for \(T\) then \(\lambda \alpha\) is an extended eigenvalue for \(T.\) 
\end{theorem}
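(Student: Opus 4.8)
The plan is to exploit the multiplicative structure of extended eigenoperators together with the invertible operator implementing the similarity; no spectral or analytic input is needed here, in contrast with the earlier results of this section. First I would record a mild extension of the observation of Lambert \cite{Lam} recalled above: if $X$ is an extended eigenoperator of $T$ associated with $\lambda$ and $Y$ is an extended eigenoperator of $T$ associated with $\mu$, then
\[
T(XY) = (TX)Y = \lambda(XT)Y = \lambda X(TY) = \lambda\mu(XY)T,
\]
so that the product $XY$ is an extended eigenoperator of $T$ associated with $\lambda\mu$ whenever $XY \neq 0$. (Lambert's statement is the special case $\mu=1$, i.e.\ $Y \in \{T\}^\prime$.)

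Next I would extract an invertible extended eigenoperator from the similarity hypothesis. Since $T$ is similar to $\alpha T$, there is an invertible operator $S$ with $S^{-1}TS = \alpha T$, which rearranges to $TS = \alpha ST$; that is, $S$ is itself an extended eigenoperator of $T$ associated with the extended eigenvalue $\alpha$. Should the similarity instead be written with $S$ and $S^{-1}$ interchanged, one simply replaces $S$ by $S^{-1}$, which carries the extended eigenvalue $\alpha$ by the elementary computation that $TV = \beta VT$ implies $TV^{-1} = \beta^{-1}V^{-1}T$ for invertible $V$.

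Finally I would combine the two ingredients. Let $X \neq 0$ be an extended eigenoperator of $T$ associated with the given extended eigenvalue $\lambda$. By the product rule applied to $X$ (eigenvalue $\lambda$) and $S$ (eigenvalue $\alpha$), the operator $Y = XS$ satisfies $TY = \lambda\alpha\, YT$. It remains only to verify that $Y \neq 0$: since $S$ is invertible, $XS = 0$ would force $X = (XS)S^{-1} = 0$, contradicting $X \neq 0$. Hence $\lambda\alpha$ is an extended eigenvalue of $T$, as required.

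The argument is almost entirely formal, so there is no genuine obstacle; the only points demanding care are the bookkeeping of the similarity—choosing the invertible intertwiner so that it carries the extended eigenvalue $\alpha$ rather than $\alpha^{-1}$—and the elementary but essential observation that composing with an invertible operator cannot annihilate the nonzero eigenoperator $X$.
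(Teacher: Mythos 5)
Your proof is correct and follows essentially the same route as the paper: both arguments take the similarity intertwiner $S$ satisfying $TS=\alpha ST$, form the product $XS$, verify $T(XS)=\lambda\alpha (XS)T$, and use invertibility of $S$ to ensure $XS\neq 0$. Your packaging of the computation as a general product rule for extended eigenoperators is a harmless (and slightly more modular) reorganization of the same idea, not a different proof.
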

\begin{proof}
Let \(S \) be an invertible operator such that \(\alpha T = S^{-1}TS.\) Let \(X\) be an extended eigenoperator associated with an extended eigenvalue \(\lambda\) of \(T.\) We have
\begin{eqnarray}
TX  = \lambda XT = \lambda (XS) (S^{-1}T),
\end{eqnarray}
so that 
\begin{eqnarray}
T(XS)= \lambda (XS)(S^{-1}TS)= \lambda \alpha (XS)T.
\end{eqnarray}

Notice that \(XS \neq 0\) because \(X \neq 0\) and \(S\) is onto. This means that \(\lambda \alpha\) is an extended eigenvalue for \(T\) and   \(XS\) is a corresponding extended eigenoperator.
\end{proof}

\begin{theorem}
\label{thm:bilateral}
If  \(W\) is a bilateral weighted shift then every  \(\lambda \in \mathbb T \) is an extended eigenvalue for \(W.\)
\end{theorem}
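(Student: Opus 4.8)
The plan is to reduce the statement to Theorem \ref{sim} together with the trivial remark that $\lambda=1$ is always an extended eigenvalue. First I would exhibit, for each $\alpha\in\mathbb T,$ an explicit invertible operator implementing a similarity between $W$ and $\alpha W.$ The natural candidate is the diagonal unitary $D_\alpha$ defined on the orthonormal basis by $D_\alpha e_n=\alpha^n e_n;$ since $|\alpha|=1$ this is a bounded, invertible (indeed unitary) operator with $D_\alpha^{-1}e_n=\overline{\alpha}^{\,n}e_n.$ A one-line computation on basis vectors,
\begin{align*}
D_\alpha^{-1}WD_\alpha e_n = D_\alpha^{-1}W(\alpha^n e_n) = \alpha^n w_n D_\alpha^{-1}e_{n+1} = \alpha^{-1}w_n e_{n+1} = \alpha^{-1}We_n,
\end{align*}
shows that $D_\alpha^{-1}WD_\alpha=\alpha^{-1}W.$ Replacing $\alpha$ by $\alpha^{-1},$ which again lies in $\mathbb T,$ this says precisely that $W$ is similar to $\alpha W$ for every $\alpha\in\mathbb T.$

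Next I would observe that $\lambda=1$ is an extended eigenvalue of $W,$ since the identity operator $X=I$ is a nonzero operator satisfying $WI=1\cdot IW.$ With this starting point in hand I would apply Theorem \ref{sim}: taking $T=W$ and using the similarity between $W$ and $\alpha W$ just established, the theorem converts the extended eigenvalue $\lambda=1$ into the extended eigenvalue $1\cdot\alpha=\alpha.$ Since $\alpha\in\mathbb T$ is arbitrary, every point of the unit circle is an extended eigenvalue for $W,$ which is the assertion.

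I do not expect any serious obstacle here: the whole argument rests on the diagonal-unitary similarity, the classical device showing that the spectral picture of a bilateral weighted shift is rotation invariant, together with the bookkeeping already packaged in Theorem \ref{sim}. The only point requiring a moment of care is the direction of the similarity, namely whether one lands on $\alpha W$ or on $\alpha^{-1}W,$ but since $\mathbb T$ is closed under inversion this causes no difficulty. Finally I would note that this theorem asserts only the inclusion of $\mathbb T$ into the set of extended eigenvalues; the reverse inclusion, under the additional hypothesis that $\sigma_p(W)$ has nonempty interior, is what Theorem \ref{annulus} supplies once $W$ is known to have rich point spectrum.
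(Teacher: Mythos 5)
Your proposal is correct and takes essentially the same approach as the paper: the paper's proof likewise invokes Theorem \ref{sim} with \(\lambda=1\) and \(\alpha=e^{i\theta},\) citing the unitary equivalence of \(W\) and \(e^{i\theta}W\) as a known fact, which is precisely the diagonal-unitary similarity \(D_\alpha^{-1}WD_\alpha=\alpha^{-1}W\) that you verify explicitly. The only difference is that you spell out this classical computation while the paper leaves it implicit.
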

\begin{proof}
Notice that if \(W\) is a bilateral weighted shift and if \(\theta \in \mathbb R\) then \(W\) is unitarily equivalent to \(e^{i \theta} W.\) Hence, it follows from Theorem \ref{sim} with \(\alpha =e^{i\theta}\) and \(\lambda =1\) that \(e^{i \theta}\) is an extended eigenvalue for \(W.\) Thus, the unit circle \(\mathbb T = \{ \lambda \in \mathbb C \colon |\lambda |=1\}\) is contained in the set of extended eigenvalues for~\(W.\) 
\end{proof}

Shkarin \cite{S} constructed an example of  a compact, quasinilpotent bilateral weighted shift \(W\) so that the set of extended eigenvalues of \(W\) is   the unit circle. 

Now we consider the point spectrum of a bilateral weighted shift. We shall follow the discussion  in the classical survey on weighted shift operators by Allen L. Shields \cite{Shi}. Let us consider the  quantities
\begin{align}
\label{r3}
r_3^{+}(W) &  \colon= \limsup_{n \rightarrow \infty} |w_0 \cdots w_{n-1}|^{1/n}, \\ 
 \label{r2}
 r_2^{-}(W) &  \colon = \liminf_{n \rightarrow \infty} |w_{-1} \cdots w_{-n}|^{1/n}.
\end{align}
 It turns out that when  \(r_3^+(W) < r_2^-(W)\) we have
\begin{align}
 \{z \in \mathbb{C} \colon r_3^+(W) < | z | < r_2^-(W)\}  & \subseteq \sigma_p(W) \\
 \sigma_p(W) & \subseteq  \{z \in \mathbb{C} \colon r_3^+(W) \leq  | z | \leq r_2^-(W)\}.
 \end{align}
 Also,   every \(z \in \mathbb C\) with \( r_3^+(W) < | z | < r_2^-(W)\)  is a simple eigenvalue of \(W\)  and a corresponding eigenvector is given by the expression
\begin{eqnarray}
\label{eigenshift2}
h(z)= e_0 + \sum_{n=1}^\infty  \frac{w_0 \cdots w_{n-1}}{z^n}\, e_n + \sum_{n=1}^\infty \frac{z^n}{w_{-1} \cdots w_{-n}} \, e_{-n}.
\end{eqnarray}
\begin{theorem}
\label{shift2}
Let \(W\) be an injective bilateral weighted shift  on an infinite-dimensional, separable complex Hilbert space and  suppose that  \(r_3^+(W) < r_2^-(W).\) If \(\lambda\) is an extended eigenvalue for \(W\)  then \(|\lambda |=1.\)
\end{theorem}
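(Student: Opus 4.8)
The plan is to reduce the statement to Theorem~\ref{annulus}. Write \(r=r_3^+(W)\) and \(R=r_2^-(W),\) so that the inclusions recorded just before the theorem read \(\{z:r<|z|<R\}\subseteq\sigma_p(W)\subseteq\{z:r\le|z|\le R\}.\) Provided \(0<r<R,\) these are exactly the hypotheses of Theorem~\ref{annulus} with this \(r\) and \(R,\) so once I know that \(W\) has rich point spectrum the conclusion \(|\lambda|=1\) follows at once. Thus the entire burden is to check that \(W\) has rich point spectrum, and for that I would appeal to the sufficient condition in Lemma~\ref{rich}.

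To apply Lemma~\ref{rich} I would take \(h\) to be the eigenvector map of equation~(\ref{eigenshift2}). Since the open annulus is contained in \(\sigma_p(W)\) and \(\sigma_p(W)\) sits inside the closed annulus, \({\rm int}\,\sigma_p(W)\) is precisely the open annulus \(\{r<|z|<R\},\) which is where \(h\) is defined. I would then verify the three requirements of the lemma: first, \(h(z)\in\ker(W-z)\setminus\{0\},\) which holds because \(h(z)\) is the stated eigenvector and its \(e_0\)-coordinate equals \(1;\) second, that \(h\) is analytic, which follows because the positive-index series converges locally uniformly for \(|z|>r\) and the negative-index series for \(|z|<R\) (both by the root test together with the \(\limsup\) and \(\liminf\) defining \(r\) and \(R\)), so on compact subsets of the annulus \(h\) is a locally uniform limit of analytic vector-valued functions and is therefore analytic; and third, that \(\{h(z)\}\) is total.

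The totality is the step I expect to carry the real weight. Given \(g^\ast\in H\) annihilating every \(h(z)\) on the annulus, I would expand
\[
\langle h(z),g^\ast\rangle=c_0+\sum_{n=1}^\infty\frac{w_0\cdots w_{n-1}}{z^n}\,c_n+\sum_{n=1}^\infty\frac{z^n}{w_{-1}\cdots w_{-n}}\,c_{-n},
\]
with \(c_n=\langle e_n,g^\ast\rangle,\) and recognize the right-hand side as a Laurent series in \(z\) that is holomorphic on the annulus and vanishes identically there. Uniqueness of Laurent coefficients forces every coefficient to be zero, and since each \(w_0\cdots w_{n-1}\) and each \((w_{-1}\cdots w_{-n})^{-1}\) is nonzero, this gives \(c_n=0\) for all \(n\in\mathbb Z,\) hence \(g^\ast=0.\) With totality established, Lemma~\ref{rich} gives rich point spectrum and Theorem~\ref{annulus} completes the argument.

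The main obstacle I anticipate is the degenerate case \(r=r_3^+(W)=0,\) where the ``annulus'' is in fact the punctured disc \(\{0<|z|<R\}\) and Theorem~\ref{annulus}, which requires \(0<r,\) does not apply verbatim. The construction of \(h\) and the totality argument still go through, so Theorem~\ref{key} yields \(\lambda\cdot\{0<|z|<R\}\subseteq\{|z|\le R\}\) and hence the upper bound \(|\lambda|\le 1;\) but the matching lower bound \(|\lambda|\ge 1\) is not forced by this one-sided geometry alone, and recovering it would require a genuinely different idea exploiting additional structure of \(W.\) This is the one place where I would need to stop and supply an extra argument.
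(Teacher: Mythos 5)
Your proof is correct and is essentially the paper's own argument: the paper also reduces Theorem \ref{shift2} to Theorem \ref{annulus} by checking that \(W\) has rich point spectrum, using the analytic eigenvector field \(h\) of equation (\ref{eigenshift2}) and the uniqueness of Laurent coefficients to establish totality, exactly as you do. As for the degenerate case \(r_3^+(W)=0\) that you flag: this is a real boundary issue, but the paper does not treat it either --- its proof invokes Theorem \ref{annulus}, whose hypothesis requires \(0<r<R\), without comment --- so your observation that the argument then yields only \(|\lambda|\le 1\) is a point of extra care relative to, not a gap compared with, the published proof.
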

\begin{proof}
This result becomes a consequence of Theorem \ref{annulus} if we can show that \(W\) has rich point spectrum. First of all,  the interior of the point spectrum of \(W\) is the open annulus
\begin{eqnarray}
G =  \{z \in \mathbb{C} \colon r_3^+(W) < | z | < r_2^-(W)\}. 
\end{eqnarray}
Notice that this annulus  is connected. Consider the analytic function \( h \colon G \to H\) defined by equation~(\ref{eigenshift}). We have \(h(z) \in \ker (W-z) \backslash \{0\}.\)
We must show that the family of eigenvectors \(\{ h(z) \colon  z \in G\}\) is a total subset of \(H.\)
Take any vector \(g = \sum b_n e_n \in H\) and   suppose that  \( \langle f(z),g \rangle =0\) for all \(z \in 	G.\) We ought to show that  then \(g=0.\) Consider the complex function \(\varphi \colon G \to \mathbb C\) defined by \(\varphi( z)=\langle f(z),g \rangle,\) so that
\begin{eqnarray}
\varphi (z)= \overline{b}_0 + \sum_{n=1}^\infty  \overline{b}_n \,w_0 \cdots w_{n-1} \,\frac{1}{z^n} + \sum_{n=1}^\infty \frac{\overline{b}_{-n}}{w_{-1} \cdots w_{-n}} \, z^n, \quad z \in G.
\end{eqnarray}
Thus,  \(\varphi\) is analytic and it vanishes identically  on \(G.\) Hence, \(b_n=0\) for all \(n \in \mathbb Z,\) that is, \(g=0.\)
\end{proof}

Let \(\lambda \in \mathbb T\) and let us consider the diagonal operator \(X_0= {\rm diag}\, (\lambda^{-n})_{n \in \mathbb Z}.\) We have 
\[
X_0h(z) = e_0 + \sum_{n=1}^\infty  \frac{w_0 \cdots w_{n-1}}{\lambda^n z^n}\, e_n + \sum_{n=1}^\infty \frac{\lambda^n z^n}{w_{-1} \cdots w_{-n}} \, e_{-n}=h(\lambda z),
\]
and it follows that
\[
WX_0 h(z)= Wh(\lambda z)=\lambda z h(\lambda z) = \lambda z X_0 h(z)= \lambda X_0 W h(z),
\]
and since the family of eigenvectors \(\{h(z) \colon z \in \Omega\}\) is a total set, it follows that \(WX_0=\lambda X_0W,\) so that \(X_0\) is an extended eigenoperator for \(W\) associated with the extended eigenvalue \(\lambda.\) Notice that \(X_0\) is a unitary operator since \(|\lambda|=1.\)

\section{Extended eigenvalues for analytic Toeplitz operators}
\label{toeplitz}
Now we focus on Deddens result (\ref{quick}) and we show that it can be viewed as a special case of Lemma~\ref{key}. We first show that the adjoint of a non trivial Toeplitz operator has rich point spectrum. The following result is a generalization of Lemma \ref{rich} that suits the case of the adjoint of an analytic Toeplitz operator.

\begin{lemma}
\label{rich2}
Let \(T\) be a bounded linear operator on a complex Banach space~\(E\) and suppose that there is an open connected set \(G \subseteq \mathbb C,\)   an analytic mapping \(h \colon G \to E\)  and a non constant analytic function \(\psi \colon G \to \mathbb C\) so that
\begin{enumerate}
\item  \(h(z) \in \ker [T-\psi(z)] \backslash \{0\}\) for all \(z \in G,\) and
\item  \(\{ h(z) \colon z \in G \}\) is a total set. 
\end{enumerate}
Then \(T\) has rich point spectrum.
\end{lemma}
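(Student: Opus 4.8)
The plan is to verify the two defining conditions for rich point spectrum directly, following the pattern of the proof of Lemma~\ref{rich}, but accounting for the fact that the eigenvalue attached to \(h(z)\) is now \(\psi(z)\) rather than the parameter \(z\) itself. Throughout I would use the connectedness of \(G\) (which is given) in place of the connectedness of \({\rm int}\,\sigma_p(T)\) that was tacitly used in Lemma~\ref{rich}.

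First I would check that \({\rm int}\,\sigma_p(T) \neq \emptyset\). Since each \(h(z)\) is a nonzero vector lying in \(\ker[T-\psi(z)]\), every value \(\psi(z)\) is an eigenvalue of \(T\), so \(\psi(G) \subseteq \sigma_p(T)\). Because \(\psi\) is a non-constant analytic function on the connected open set \(G\), the open mapping theorem for holomorphic functions guarantees that \(\psi(G)\) is open. Hence \(\psi(G)\) is a non-empty open subset of \(\sigma_p(T)\), which gives \(\emptyset \neq \psi(G) \subseteq {\rm int}\,\sigma_p(T)\).

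Next I would establish the totality condition. Let \(D \subseteq \sigma_p(T)\) be an open disc and let \(g^\ast \in E^\ast\) annihilate the family \(\bigcup_{w \in D}\ker(T-w)\); the goal is to show \(g^\ast = 0\). As in Lemma~\ref{rich}, I would form the analytic function \(\varphi \colon G \to \mathbb{C}\) given by \(\varphi(z) = \langle h(z), g^\ast\rangle\). The key new observation is that for every \(z\) in the open set \(\psi^{-1}(D)\) the vector \(h(z)\) lies in \(\ker[T-\psi(z)] \subseteq \bigcup_{w\in D}\ker(T-w)\), so \(\varphi\) vanishes on \(\psi^{-1}(D)\). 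Provided \(\psi^{-1}(D)\) is non-empty, it is a non-empty open subset of the connected set \(G\), and the principle of analytic continuation forces \(\varphi \equiv 0\) on all of \(G\); since \(\{h(z) \colon z \in G\}\) is total, this yields \(g^\ast = 0\).

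The main obstacle is precisely the non-emptiness of \(\psi^{-1}(D)\), that is, showing that an arbitrary open disc \(D \subseteq \sigma_p(T)\) must meet the range \(\psi(G)\). This is the one place where the argument genuinely differs from Lemma~\ref{rich}, where the eigenvalue equals the parameter and the disc automatically lies in the domain of the eigenvector field. I would resolve it by recording that in the setting where the lemma is applied the point spectrum is exactly the closure of \(\psi(G)\) (indeed, for the adjoint analytic Toeplitz operator one has \(\sigma_p(T)=\psi(G)\)), so that any open disc contained in \(\sigma_p(T)\) necessarily intersects the dense subset \(\psi(G)\) and hence \(\psi^{-1}(D) \neq \emptyset\). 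Finally, I would note that Lemma~\ref{rich} is recovered as the special case \(\psi = {\rm id}\) with \(G = {\rm int}\,\sigma_p(T)\).
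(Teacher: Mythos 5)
Your proposal follows the same route as the paper's own proof: the open mapping theorem gives \(\emptyset \neq \psi(G) \subseteq {\rm int}\,\sigma_p(T)\); then, for an open disc \(D \subseteq \sigma_p(T)\), one pulls back to \(G_0 = \psi^{-1}(D)\), forms \(\varphi(z) = \langle h(z), g^\ast \rangle\), and combines analytic continuation on the connected set \(G\) with totality of \(\{h(z) \colon z \in G\}\). The one place where you stop and the paper does not is exactly the right one: the paper's proof continues \(\varphi\) analytically ``from \(G_0\) to \(G\)'' without ever checking that \(G_0\) is nonempty, and if \(G_0 = \emptyset\) that step proves nothing. So what you flag as ``the main obstacle'' is a genuine gap in the printed proof, not a defect of your own argument.

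Moreover, the gap cannot be filled from the stated hypotheses: the lemma as printed is false, so the extra assumption you introduce (\(\sigma_p(T) \subseteq {\rm clos}\,\psi(G)\), so that every open disc in \(\sigma_p(T)\) meets \(\psi(G)\)) is a necessity, not a convenience. Indeed, take \(E = H^2(\mathbb D) \oplus H^2(\mathbb D)\), \(T = S^\ast \oplus 3S^\ast\), \(G = \mathbb D\), \(\psi(z) = z\), and \(h(z) = (u_z,\ e^{1/(1-z)}\, u_{z/3})\), where \(u_w(\xi) = (1-w\xi)^{-1}\) spans \(\ker(S^\ast - w)\) for \(|w|<1\). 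Each \(h(z)\) is a nonzero eigenvector of \(T\) with eigenvalue \(z,\) and the family is total: if \((f,g) \perp h(z)\) for all \(z \in \mathbb D,\) then, writing \(F_1(z) = \overline{f(\overline z)}\) and \(F_2(z) = \overline{g(\overline z)}\) (both in \(H^2\)), one gets \(F_1(z) = -e^{1/(1-z)}F_2(z/3)\) on \(\mathbb D;\) since \(|F_1(x)| \leq \|F_1\|(1-x^2)^{-1/2}\) while \(e^{1/(1-x)}|F_2(x/3)|\) grows like \(e^{1/(1-x)}(1-x)^m\) as \(x \to 1^-\) (\(m\) being the order of the zero of \(F_2\) at \(1/3\)), this forces \(F_2 = 0\) and then \(F_1 = 0.\) Nevertheless \(\sigma_p(T) = 3\mathbb D,\) and for every \(w\) with \(1 \leq |w| < 3\) we have \(\ker(T-w) = \{0\} \oplus \mathbb C\, u_{w/3};\) hence any open disc \(D\) contained in the annulus \(1 < |w| < 3\) has all its eigenvectors inside \(\{0\} \oplus H^2,\) and rich point spectrum fails. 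One small correction to the way you justify the patch: for the adjoint of an analytic Toeplitz operator the equality \(\sigma_p(T_\varphi^\ast) = \psi(\mathbb D)\) can fail (if \(\varphi\) is a singular inner function, then \(\varphi\) is not outer, so \(0 \in \sigma_p(T_\varphi^\ast)\) although \(0 \notin \psi(\mathbb D)\)). What is true, and is all your argument needs, is the pair of inclusions \(\psi(\mathbb D) \subseteq \sigma_p(T_\varphi^\ast) \subseteq \sigma(T_\varphi^\ast) = {\rm clos}\,\psi(\mathbb D),\) which makes \(\psi(\mathbb D)\) dense in \(\sigma_p(T_\varphi^\ast)\) and hence \(\psi^{-1}(D) \neq \emptyset\) for every open disc \(D \subseteq \sigma_p(T_\varphi^\ast).\)
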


\begin{proof}
Since \(\psi\) is a non constant function, it follows from the open mapping theorem that \(\psi (G)\) is open.  Now it follows from the first condition  that \(\psi (G)\) is contained in \(\sigma_p(T),\) so that  \({\rm int}\, \sigma_p(T) \) is non empty.
Then let \(D \subseteq \sigma_p(T)\) be an open disc, let \(G_0 = \psi^{-1}(D)\)  and let us show that the family of eigenvectors \(\{ f(z) \colon z \in G_0\}\) corresponding to eigenvalues \(\psi(z) \in D\)  is a total subset of~\(E.\) Let  \(g^\ast  \in E^\ast\) be a functional  such that \(\langle f(z), g^\ast  \rangle =0\) for all \(z \in G_0.\) We must show that then \(g^\ast=0.\) Consider the analytic function \(\varphi \colon G \to \mathbb C\) defined by \(\varphi (z) =\langle f(z), g^\ast  \rangle.\) We have by assumption that \(\varphi\) vanishes on \(G_0.\) Now it follows from the principle of analytic continuation that \(\varphi\) vanishes on \(G.\) Since  the family of eigenvectors  \(\{ f(z) \colon z \in  G \}\) is a total subset of \(E,\)  it follows that \(g^\ast=0,\) as we wanted.
\end{proof}

\begin{theorem}
\label{deddens}
If  the symbol  \(\varphi\) is not constant then the adjoint operator   \(T_\varphi^\ast\) has  rich point spectrum.
\end{theorem}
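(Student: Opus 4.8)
The plan is to apply Lemma~\ref{rich2} with $E = H^2(\mathbb D)$ and $T = T_\varphi^\ast$, using the reproducing kernels of the Hardy space as the analytic family of eigenvectors. For $w \in \mathbb D$ let $k_w$ be the reproducing kernel at $w$, so that $\langle f, k_w \rangle = f(w)$ for every $f \in H^2(\mathbb D)$; in terms of the monomial orthonormal basis $e_n(u) = u^n$ one has $k_w = \sum_{n=0}^\infty \overline{w}^n e_n$. A one line computation shows that these kernels are eigenvectors of the adjoint: for every $f \in H^2(\mathbb D)$,
\[
\langle f, T_\varphi^\ast k_w \rangle = \langle T_\varphi f, k_w \rangle = (\varphi f)(w) = \varphi(w)\, f(w) = \langle f, \overline{\varphi(w)}\, k_w \rangle,
\]
so that $T_\varphi^\ast k_w = \overline{\varphi(w)}\, k_w$ for all $w \in \mathbb D$.

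The only real obstacle is a mismatch with the hypotheses of Lemma~\ref{rich2}: the eigenvalue $\overline{\varphi(w)}$ depends \emph{anti}-analytically on $w$, whereas the lemma demands a genuine analytic function $\psi$. I would resolve this by a Schwarz reflection. Set $G = \mathbb D$, define $h \colon \mathbb D \to H^2(\mathbb D)$ by $h(z) = k_{\bar z}$, and put $\psi(z) = \overline{\varphi(\bar z)}$. Writing $\varphi(u) = \sum_n a_n u^n$ gives $\psi(z) = \sum_n \overline{a}_n z^n$ and $h(z) = \sum_n z^n e_n$; both are honest power series in $z$ (scalar valued, resp. $H^2$-valued, the latter converging in norm since $\sum_n |z|^{2n} < \infty$ for $|z| < 1$), hence analytic on $\mathbb D$. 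Moreover $\psi$ is non constant precisely because $\varphi$ is.

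With these choices the first hypothesis of Lemma~\ref{rich2} follows from the eigenvalue computation above evaluated at $w = \bar z$, namely $T_\varphi^\ast h(z) = \overline{\varphi(\bar z)}\, h(z) = \psi(z)\, h(z)$, together with $h(z) = k_{\bar z} \neq 0$, since reproducing kernels never vanish. The second hypothesis is the classical totality of the reproducing kernels: if $g \in H^2(\mathbb D)$ satisfies $\langle h(z), g \rangle = 0$ for all $z \in \mathbb D$, then $\overline{g(\bar z)} = \langle k_{\bar z}, g \rangle = 0$ for all $z \in \mathbb D$, whence $g \equiv 0$. An application of Lemma~\ref{rich2} then yields that $T_\varphi^\ast$ has rich point spectrum. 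Note that, as expected, $\psi(\mathbb D) = \{\overline{\varphi(w)} : w \in \mathbb D\}$ is the interior portion of $\sigma_p(T_\varphi^\ast)$, so the reflection merely repackages the same eigenvalues as an analytic function of a new variable.
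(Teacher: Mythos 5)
Your proposal is correct and follows essentially the same route as the paper: eigenvector identity $T_\varphi^\ast K_w = \overline{\varphi(w)}\,K_w$ for the reproducing kernels, the reflected parametrization $h(z)=K_{\overline{z}}$ with $\psi(z)=\overline{\varphi(\overline{z})}$ to restore analyticity, and then Lemma~\ref{rich2}. The only difference is that you spell out the details (norm-convergence of the kernel series, non-constancy of $\psi$, and the totality computation) that the paper leaves as ``easy to see.''
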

\begin{proof}
It suffices to show that \(T_\varphi^\ast\) satisfies the conditions of Lemma \ref{rich2}. Recall that the reproducing kernel \(K_z\) is the function defined for every \(z \in \mathbb D\)   by the expression
\begin{eqnarray}
K_z(w)=\frac{1}{1-\overline{z}w},
\end{eqnarray}
and it has the property that \(\langle f, K_z \rangle = f(z)\) for all \(f \in H^2(\mathbb D).\)
It is easy to see that for all  \(z \in \mathbb D\) we have
\begin{eqnarray}
T_\varphi^\ast K_z=\overline{\varphi(z)} K_z.
\end{eqnarray}
Then, consider the analytic function  \(f \colon \mathbb D \to H^2(\mathbb D)\)  defined by  \(f(z)=K_{\overline{z}}.\) We have \(T_\varphi^\ast f(z)= \overline{\varphi(\overline{z})}f(z),\) so that the first condition in Lemma \ref{rich2} is satisfied by the analytic function \(\psi(z)=\overline{\varphi(\overline{z})}.\) Moreover, it is clear that the family of   eigenvectors \(\{f(z) \colon z \in \mathbb D\}\) is a total subset of \(H^2(\mathbb D).\) 
\end{proof}

Deddens results (\ref{dedd}) and (\ref{quick})  now   follow easily.

\begin{corollary}
 If there is an operator  \(X\) that intertwines  two analytic Toeplitz operators \(T_\varphi\) and  \(T_\psi,\)  that is, such that  \(XT_\varphi =T_\psi X,\) then (\ref{dedd}) holds.
\end{corollary}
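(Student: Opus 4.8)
The plan is to reduce the statement to Theorem~\ref{intclos} by passing to adjoints, and then to invoke Theorem~\ref{deddens}. Assume first that \(\psi\) is not constant (the interesting case). Taking adjoints in the hypothesis \(XT_\varphi = T_\psi X\) gives \(T_\varphi^\ast X^\ast = X^\ast T_\psi^\ast\), that is, \(Y T_\psi^\ast = T_\varphi^\ast Y\) with \(Y = X^\ast \neq 0\). This is exactly an intertwining relation of the form \(YT = SY\) treated in Theorem~\ref{intclos}, now with \(T = T_\psi^\ast\) and \(S = T_\varphi^\ast\).

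To apply Theorem~\ref{intclos} I need \(T_\psi^\ast\) to have rich point spectrum, and since \(\psi\) is non constant this is precisely Theorem~\ref{deddens}. Hence Theorem~\ref{intclos} yields \({\rm int}\,\sigma_p(T_\psi^\ast) \subseteq {\rm clos}\,\sigma_p(T_\varphi^\ast)\). The remaining task is to translate both sides into statements about \(\psi(\mathbb D)\) and \(\varphi(\mathbb D)\). For the left-hand side, the relation \(T_\psi^\ast K_z = \overline{\psi(z)}\,K_z\) from the proof of Theorem~\ref{deddens} shows \(\{\overline{\psi(z)} \colon z \in \mathbb D\} \subseteq \sigma_p(T_\psi^\ast)\); because \(\psi\) is non constant, the open mapping theorem makes \(\psi(\mathbb D)\) open, so this conjugated set is open and lies in \({\rm int}\,\sigma_p(T_\psi^\ast)\). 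For the right-hand side I use \(\sigma_p(T_\varphi^\ast) \subseteq \sigma(T_\varphi^\ast) = \{\overline{w} \colon w \in {\rm clos}\,\varphi(\mathbb D)\}\), the last equality coming from the classical fact \(\sigma(T_\varphi) = {\rm clos}\,\varphi(\mathbb D)\); this set is already closed, so \({\rm clos}\,\sigma_p(T_\varphi^\ast) \subseteq \{\overline{w} \colon w \in {\rm clos}\,\varphi(\mathbb D)\}\).

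Chaining the three inclusions gives \(\{\overline{\psi(z)} \colon z \in \mathbb D\} \subseteq \{\overline{w} \colon w \in {\rm clos}\,\varphi(\mathbb D)\}\), and applying complex conjugation (a homeomorphism of \(\mathbb C\) that commutes with closure) to both sides produces \(\psi(\mathbb D) \subseteq {\rm clos}\,\varphi(\mathbb D)\), which is (\ref{dedd}). The constant case is disposed of directly: if \(\psi \equiv c\) then the hypothesis reads \(X(T_\varphi - cI) = 0\) with \(X \neq 0\), so \(T_\varphi - cI\) is not invertible, whence \(c \in \sigma(T_\varphi) = {\rm clos}\,\varphi(\mathbb D)\), and since \(\psi(\mathbb D) = \{c\}\) the inclusion again holds. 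The main obstacle I anticipate is purely bookkeeping: tracking the complex conjugations when moving between \(\sigma_p(T_\varphi^\ast)\) and \(\varphi(\mathbb D)\), and arranging matters so that the set attached to \(\varphi\) is closed (absorbing the \({\rm clos}\) delivered by Theorem~\ref{intclos}) while the set attached to \(\psi\) is genuinely open (so that it sits inside the interior required on the left).
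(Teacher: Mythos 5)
Your proof is correct and follows essentially the same route as the paper: take adjoints to get \(X^\ast T_\psi^\ast = T_\varphi^\ast X^\ast,\) apply Theorem~\ref{intclos} with \(T=T_\psi^\ast\) (which has rich point spectrum by Theorem~\ref{deddens}), bound \({\rm int}\,\sigma_p(T_\psi^\ast)\) from below by \(\overline{\psi(\mathbb D)}\) via the reproducing kernels and \({\rm clos}\,\sigma_p(T_\varphi^\ast)\) from above by \({\rm clos}\,\overline{\varphi(\mathbb D)}\) via \(\sigma(T_\varphi)={\rm clos}\,\varphi(\mathbb D),\) then undo the conjugation. Your separate treatment of constant \(\psi\) is a small but genuine refinement: the paper's argument tacitly requires \(\psi\) non constant for Theorem~\ref{deddens} to apply, and you close that gap directly by noting \(X(T_\varphi-cI)=0\) forces \(c\in\sigma(T_\varphi).\)
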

\begin{proof}
Taking adjoints yields \(T_\varphi^\ast X^\ast = X^\ast T^\ast_\psi\) with \(X^\ast \neq 0.\)  This means that \(X^\ast\) intertwines \(T_\psi^\ast\) and \(T_\varphi^\ast,\) and from Theorem \ref{intclos} we get \( {\rm int}\, \sigma_p(T_\psi^\ast) \subseteq {\rm clos}\, \sigma_p(T_\varphi^\ast).\) We have  on the one hand  \(\overline{\psi(\mathbb D)} \subseteq {\rm int}\, \sigma_p(T_\psi ^\ast)\) and  on the other hand  \({\rm clos}\, \sigma_p(T_\varphi)  \subseteq \sigma(T_\varphi^\ast) = {\rm clos}\, \overline{\varphi(\mathbb D)},
\)
so  that  \(\overline{ \psi(\mathbb D)} \subseteq {\rm clos}\, \overline{\varphi(\mathbb D)},\) as we wanted. 
\end{proof}
\begin{corollary}
If  the symbol  \(\varphi\) is not constant and if \(\lambda\) is an extended eigenvalue of \(T_\varphi\) then (\ref{quick}) holds.
\end{corollary}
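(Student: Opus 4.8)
The plan is to reduce the statement to an application of Theorem~\ref{key} to the adjoint operator \(T_\varphi^\ast,\) which we already know to have rich point spectrum by Theorem~\ref{deddens}. First I would record the geometry of \(\sigma_p(T_\varphi^\ast).\) Since \(\varphi\) is non constant, the open mapping theorem makes \(\varphi(\mathbb D)\) open, and hence its complex conjugate \(\overline{\varphi(\mathbb D)}\) is open as well; moreover, by Theorem~\ref{deddens} the eigenvectors \(f(z)=K_{\bar z}\) satisfy \(T_\varphi^\ast f(z)=\overline{\varphi(\bar z)}\,f(z),\) so that \(\overline{\varphi(\mathbb D)}=\{\overline{\varphi(\bar z)}\colon z\in\mathbb D\}\subseteq\sigma_p(T_\varphi^\ast),\) and being open it lies in \({\rm int}\,\sigma_p(T_\varphi^\ast).\) On the other side, \(\sigma_p(T_\varphi^\ast)\subseteq\sigma(T_\varphi^\ast)={\rm clos}\,\overline{\varphi(\mathbb D)},\) so that \({\rm clos}\,\sigma_p(T_\varphi^\ast)\subseteq{\rm clos}\,\overline{\varphi(\mathbb D)}.\)

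Next I would pass to adjoints to transfer the extended eigenvalue hypothesis from \(T_\varphi\) to \(T_\varphi^\ast.\) Observe first that \(\lambda\neq 0,\) since \(T_\varphi\) is injective and therefore \(0\) cannot be an extended eigenvalue. If \(X\neq 0\) satisfies \(T_\varphi X=\lambda X T_\varphi,\) then taking adjoints gives \(X^\ast T_\varphi^\ast=\bar\lambda\, T_\varphi^\ast X^\ast,\) which rearranges to \(T_\varphi^\ast X^\ast=(1/\bar\lambda)\,X^\ast T_\varphi^\ast\) with \(X^\ast\neq 0.\) Thus \(1/\bar\lambda\) is an extended eigenvalue of \(T_\varphi^\ast.\)

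Now I would apply Theorem~\ref{key} to \(T_\varphi^\ast\) with extended eigenvalue \(1/\bar\lambda,\) obtaining \((1/\bar\lambda)\cdot{\rm int}\,\sigma_p(T_\varphi^\ast)\subseteq{\rm clos}\,\sigma_p(T_\varphi^\ast).\) Restricting the left-hand side to the open set \(\overline{\varphi(\mathbb D)}\subseteq{\rm int}\,\sigma_p(T_\varphi^\ast)\) and invoking the inclusion \({\rm clos}\,\sigma_p(T_\varphi^\ast)\subseteq{\rm clos}\,\overline{\varphi(\mathbb D)}\) recorded above yields \((1/\bar\lambda)\cdot\overline{\varphi(\mathbb D)}\subseteq{\rm clos}\,\overline{\varphi(\mathbb D)}.\) Finally, taking complex conjugates of the whole inclusion, using that conjugation is a homeomorphism of \(\mathbb C\) (so it commutes with closure) and that \(\overline{1/\bar\lambda}=1/\lambda,\) gives exactly \((1/\lambda)\cdot\varphi(\mathbb D)\subseteq{\rm clos}\,\varphi(\mathbb D),\) which is~(\ref{quick}).

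The only real hazard here is routine bookkeeping: one must keep straight the two distinct overlines, namely complex conjugation of a set versus topological closure (written \({\rm clos}\)), and check that the conjugate of \(1/\bar\lambda\) is \(1/\lambda\) and not \(1/\bar\lambda.\) There is no genuine analytic obstacle, since all the substantive work — that \(T_\varphi^\ast\) has rich point spectrum, and the resulting geometric constraint on extended eigenvalues — has already been done in Theorem~\ref{deddens} and Theorem~\ref{key}.
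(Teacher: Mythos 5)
Your proof is correct: passing to adjoints turns \(\lambda\) into the extended eigenvalue \(1/\overline{\lambda}\) of \(T_\varphi^\ast,\) Theorem~\ref{deddens} supplies rich point spectrum, Theorem~\ref{key} gives \((1/\overline{\lambda})\cdot {\rm int}\,\sigma_p(T_\varphi^\ast) \subseteq {\rm clos}\,\sigma_p(T_\varphi^\ast),\) and the sandwich \(\overline{\varphi(\mathbb D)} \subseteq {\rm int}\,\sigma_p(T_\varphi^\ast)\) together with \({\rm clos}\,\sigma_p(T_\varphi^\ast) \subseteq \sigma(T_\varphi^\ast)={\rm clos}\,\overline{\varphi(\mathbb D)}\) plus conjugation yields (\ref{quick}); you also correctly disposed of the case \(\lambda=0\) via injectivity of \(T_\varphi.\) The paper's own route is slightly different in packaging: it rewrites \(T_\varphi X=\lambda XT_\varphi\) as the intertwining relation \(XT_{\lambda\varphi}=T_\varphi X,\) i.e.\ it uses the fact that \(\lambda T_\varphi\) is again an analytic Toeplitz operator with symbol \(\lambda\varphi,\) and then invokes its first corollary (Deddens' inclusion (\ref{dedd}), proved via the intertwining Theorem~\ref{intclos}) with \(\psi=\lambda\varphi,\) obtaining \(\varphi(\mathbb D)\subseteq {\rm clos}\,(\lambda\varphi(\mathbb D))\) and dividing by \(\lambda.\) That route reuses an already-proved result and avoids the conjugate-reciprocal bookkeeping entirely, since adjoints are taken only once inside the intertwining corollary; your route is self-contained, does not need the observation that \(\lambda T_\varphi=T_{\lambda\varphi},\) and realizes literally the paper's announced claim in Section~\ref{sect:rich} that (\ref{quick}) is a direct consequence of Theorem~\ref{key}. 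Since Theorem~\ref{key} is itself the special case of Theorem~\ref{intclos} applied to \(\lambda T\) and \(T,\) the two arguments rest on exactly the same ingredients and differ only in where the reduction to the adjoint is performed.
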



\section{Factorization of extended eigenoperators in Hilbert space}
\label{sec:factor}

Now we consider  the problem of describing, for an  operator   on a complex Hilbert space,  the family of  all the extended  eigenoperators corresponding to an extended eigenvalue.

Notice that if \(X_0\) is a particular extended eigenoperator for an operator \(T\) corresponding to an extended eigenvalue \(\lambda \in \mathbb C\)  and  if \(R \in \{T\}^\prime\)  then \(X_0R\) is an extended eigenoperator for \(T\) corresponding to~\(\lambda.\) It is natural to ask whether or not all the extended eigenoperators arise in this fashion. We provide a factorization result in Theorem \ref{main7} under certain conditions  that are fulfilled by any bilateral weighted shift whose point spectrum has non-empty interior.

Our result is based on the  construction  of an analytic reproducing kernel space~\(\mathcal H\)   for an operator \(T\)  with the nice property that the shift operator \(M_z\) is bounded on \(\mathcal H\) and  that \(T^\ast\) is unitarily equivalent to the shift operator \(M_z\) on the space~\(\mathcal H.\) The construction in the particular case of the operator \(T=I-C_0^\ast\)  
appears in the paper by Shields and Wallen~\cite{SW} and also in the papers by Kriete and Trutt \cite{KT1,KT2}.

Then we apply this result to show that if \(W\) is a bilateral weighted shift whose point spectrum has non-empty interior then \(W\) has the property that every extended eigenoperator \(X\) corresponding to an extended eigenvalue \(\lambda \in \mathbb T\) factors as a product \(X=X_0R,\) where \(X_0={\rm diag}\, (\lambda^{-n})_{n \in \mathbb Z}\) is a unitary diagonal operator (a particular extended eigenoperator) and where \(R \in \{W\}^\prime.\)

We also discuss the applicability of this  result to   the finite continuous Ces\`aro operator  or the adjoint of the discrete Ces\`aro operator.

Let us recall that an {\em analytic reproducing kernel space} on an open set \(G \subseteq \mathbb C\) is a Hilbert space \(\mathcal H\) of analytic functions \( f \colon G \to \mathbb C\) such that the point evaluations \(f \mapsto f(w)\)  are bounded linear functionals.  If \(\mathcal H\) is an analytic reproducing kernel space on \(G\) then for each \(w \in G\) there exists \(K_w \in \mathcal H\) such that \(f(w)=\langle f,K_w \rangle\) for every \(f \in \mathcal H.\)
The  function \(K \colon G \times G \to \mathbb C\) 
defined  by the expression \(K(z,w)=K_w(z)\)  is called the {\em reproducing kernel} of \(\mathcal H.\) 
It follows from the reproducing property that
\[
K(z,w)= K_w(z)= \langle K_w, K_z \rangle = \overline{\langle K_z, K_w \rangle} = \overline{K_z(w)} = \overline{K(w,z)}.
\]
Since  \(K\) is analytic in \(z,\) it follows that \(K\) is  co-analytic in \(w,\) and \(K\) is said to be an {\em analytic kernel.}

If \(\varphi \colon G \to \mathbb C\) is an analytic function such that \(\varphi \cdot f \in \mathcal H\) for every \(f \in \mathcal H\) then \(\varphi\) is called a {\em multiplier.} It follows from the closed graph theorem that the  operator \(M_\varphi\) defined by \(M_\varphi f = \varphi \cdot f\) is bounded.

\begin{theorem}
\label{main7} 
Let \(T\) be an operator on a complex Hilbert space \(H,\)  let \(G \subseteq  \mathbb C\) be an  open connected set and  suppose that 
there is an analytic mapping \(h \colon G \to H\) such that 
\begin{enumerate}
\item[{\rm (i)}] \(\dim \ker (T -z)=1\) for every \(z \in G,\)
\item[{\rm (ii)}]  \(h(z) \in \ker (T- z) \backslash \{0\}\) for every \(z \in G,\) 
\item[{\rm (iii)}] \(\{h(z) \colon z \in G  \}\) is a total subset of \(H.\)  
\end{enumerate}
Then there exists an analytic  reproducing kernel  space \(\mathcal H\) on \(G\) with the property that \(M_z\) is bounded on~\(\mathcal H,\) and there exists a unitary operator \(U \colon H \to \mathcal H\) such that \(T^\ast =U^\ast M_z U.\) 
\end{theorem}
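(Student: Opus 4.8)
The plan is to manufacture the reproducing kernel space directly from the analytic family of eigenvectors $h(z)$, rather than first guessing a kernel and then identifying its space. I will use that $G$ is symmetric under complex conjugation, $\overline{G}=G$, which holds in every application below (an annulus for a bilateral shift, a disc $D(q/2,q/2)$ for the Ces\`aro operators), and I would define a map $U\colon H\to \mathbb C^{G}$ by
\[
(Uf)(z)=\langle f, h(\bar z)\rangle .
\]
Because $h$ is analytic and the two conjugations in $\langle f,h(\bar z)\rangle$ cancel, each function $Uf$ is analytic on $G$; this is the point where I would be most careful, since the opposite pairing $\langle f,h(z)\rangle$ is co-analytic and would produce the wrong model. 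The eigenrelation $Th(\bar z)=\bar z\,h(\bar z)$, from hypothesis (ii), then gives for every $f\in H$
\[
(UT^\ast f)(z)=\langle T^\ast f,h(\bar z)\rangle=\langle f, T h(\bar z)\rangle=\langle f,\bar z\,h(\bar z)\rangle=z\,(Uf)(z),
\]
so that $UT^\ast=M_zU$ at the level of functions.

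Next I would turn $U(H)$ into a Hilbert space. The totality hypothesis (iii) shows that $U$ is injective: if $Uf=0$ then $f\perp h(w)$ for every $w\in G$, whence $f=0$. I would therefore set $\mathcal H:=U(H)$ and transport the norm by declaring $\|Uf\|_{\mathcal H}:=\|f\|_H$; this is unambiguous precisely because $U$ is injective, it makes $U$ a surjective isometry, hence a unitary, and completeness of $H$ forces completeness of $\mathcal H$. To see that $\mathcal H$ is an analytic reproducing kernel space I would bound the point evaluations by Cauchy--Schwarz, $|(Uf)(w)|=|\langle f,h(\bar w)\rangle|\le \|h(\bar w)\|\,\|Uf\|_{\mathcal H}$, so evaluation at each $w\in G$ is a bounded functional on $\mathcal H$; the reproducing kernel works out to $K(z,w)=\langle h(\bar w),h(\bar z)\rangle$, which is analytic in $z$ and co-analytic in $w$, as the definition of an analytic kernel demands. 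The role of the simplicity hypothesis (i) is to guarantee that $h(z)$ spans the whole eigenspace $\ker(T-z)$, so that the multiplicity-one multiplication operator $M_z$ faithfully models $T^\ast$; the surjective isometry above already encodes this.

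Finally I would read off the two remaining assertions from the intertwining, which is the pleasant part. For $F=Uf\in\mathcal H$ the identity $M_zF=M_zUf=UT^\ast f$ shows at once that $M_z$ carries $\mathcal H$ into itself, and
\[
\|M_zF\|_{\mathcal H}=\|UT^\ast f\|_{\mathcal H}=\|T^\ast f\|_H\le\|T\|\,\|F\|_{\mathcal H},
\]
so $M_z$ is bounded on $\mathcal H$. Rewriting $UT^\ast=M_zU$ with $U$ unitary yields $T^\ast=U^\ast M_zU$, which is the claim. The main obstacle is thus neither the boundedness of $M_z$ nor the intertwining, both of which are essentially automatic once $U$ is in hand, but rather the verification that $U$ is a genuine unitary onto an honest analytic reproducing kernel space: keeping the conjugation variance straight so that the images are analytic rather than co-analytic, using totality to secure injectivity, and transporting the norm so as to obtain simultaneously completeness and bounded point evaluations.
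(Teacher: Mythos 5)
Your proposal is correct and takes essentially the same route as the paper: the paper's proof likewise defines \(\hat f(z)=\langle f, h(\overline{z})\rangle,\) transports the norm by \(\|\hat f\|=\|f\|,\) and reads off \(UT^\ast=M_zU,\) hence the boundedness of \(M_z\) and \(T^\ast=U^\ast M_zU.\) If anything, you supply details the paper leaves implicit --- injectivity of \(U\) via hypothesis (iii) to make the transported norm well defined, the Cauchy--Schwarz bound on point evaluations, the kernel formula \(K(z,w)=\langle h(\overline{w}),h(\overline{z})\rangle,\) and the tacit need for \(G=\overline{G}\) (or else the model space lives on the reflected domain).
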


\begin{proof}
Let \(f \in H\) and let \(\hat{f} \colon G \to \mathbb C\) be the analytic function defined by the expression  \(\hat{f}(z)= \langle f, h(\overline{z})  \rangle.\) Let \(\mathcal H\) be the Hilbert space of all functions \(\hat{f}\) provided with the norm \(\|\hat{f}\|=\|f\|.\) It is clear that the map \(U \colon H \to \mathcal H\) defined by \(Uf=\hat{f}\) is a unitary operator and that for every \(z \in G\) we have
\begin{align*}
(UT^\ast f)(z) & = \langle T^\ast f, h(\overline{z}) \rangle\\
& = \langle f, T h(\overline{z}) \rangle\\
& = \langle f, \overline{z}h(\overline{z}) \rangle\\
& = \langle z f,h(\overline{z}) \rangle\\
& = (M_z Uf)(z).
\end{align*}
It follows that \(UT^\ast =M_zU,\) so that \(M_z\) is bounded on \(\mathcal H,\) and  \(T^\ast  = U^\ast M_z U.\) 
\end{proof}

The following result about multipliers is an important  tool for  the proof of Theorem \ref{model}. It is stated as  Lemma 5 in the paper of Shields and Wallen \cite{SW}.
\begin{lemma}
\label{mult} If \(\varphi \in H^\infty(G)\) then the multiplication  operator \(M_\varphi\) defined by \(M_\varphi f = \varphi \cdot f\) is  a bounded   linear operator on~\(\mathcal H\) with \(\|M_\varphi\|=\|\varphi\|_\infty.\) 
\end{lemma}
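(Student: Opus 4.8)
The identity $\|M_\varphi\|=\|\varphi\|_\infty$ splits into two inequalities, and the plan is to treat them separately. Boundedness of $M_\varphi$ is free from the closed graph theorem, exactly as recorded in the paragraph preceding Theorem \ref{main7}. The bound $\|M_\varphi\|\ge\|\varphi\|_\infty$ is a soft consequence of the reproducing kernel structure, whereas the reverse inequality $\|M_\varphi\|\le\|\varphi\|_\infty$ is the substantive part and is where I expect the main difficulty. For the lower bound I would exploit the fact that each reproducing kernel $K_w$ is an eigenvector of $M_\varphi^\ast$: for every $f\in\mathcal H$ the reproducing property gives
\[
\langle f, M_\varphi^\ast K_w\rangle=\langle \varphi\cdot f, K_w\rangle=\varphi(w)f(w)=\langle f,\overline{\varphi(w)}\,K_w\rangle,
\]
so that $M_\varphi^\ast K_w=\overline{\varphi(w)}\,K_w$. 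Since $K_w\neq 0$ (it is the image under the unitary $U$ of the nonzero vector $h(\overline w)$ from Theorem \ref{main7}), this yields $\|M_\varphi\|=\|M_\varphi^\ast\|\ge|\varphi(w)|$ for every $w\in G$, and taking the supremum over $w\in G$ gives $\|M_\varphi\|\ge\|\varphi\|_\infty$.

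For the upper bound the plan is to realize the multiplication concretely as the restriction of a multiplication on an $L^2$ space. The natural route is to use that $M_z$ on $\mathcal H$ is subnormal, with minimal normal extension equal to multiplication by $z$ on some $L^2(\mu)$, where $\mu$ is a positive measure whose support lies in $\sigma(M_z)\subseteq\overline G$; this is precisely the situation in all the applications, for instance in the Kriete--Trutt model \cite{KT1} for $C_0$. Under such a realization one has $\mathcal H\subseteq L^2(\mu)$, and for $\varphi\in H^\infty(G)$ the operator $M_\varphi$ is the restriction to $\mathcal H$ of pointwise multiplication by $\varphi$ on $L^2(\mu)$. Consequently $\|M_\varphi f\|=\|\varphi\cdot f\|_{L^2(\mu)}\le\|\varphi\|_{L^\infty(\mu)}\|f\|$, so it remains only to compare $\|\varphi\|_{L^\infty(\mu)}$ with $\|\varphi\|_\infty=\sup_{z\in G}|\varphi(z)|$.

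The main obstacle is exactly this last comparison, together with the justification of the subnormal realization: one must check that $\mathrm{supp}\,\mu\subseteq\overline G$ and that a bounded analytic function on $G$ is controlled $\mu$-almost everywhere by its supremum over $G$, which by the maximum modulus principle reduces to understanding how $\mu$ charges $\partial G$. Once this is in place, the two inequalities combine to give $\|M_\varphi\|=\|\varphi\|_\infty$. An alternative, more self-contained route for the upper bound would be to approximate $\varphi$ by polynomials $p_n$, deduce $M_{p_n}\to M_\varphi$ strongly from the pointwise convergence supplied by the reproducing kernels together with a uniform norm bound, and then estimate $\|p_n(M_z)\|$ by $\|p_n\|_{\infty,G}$. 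This merely shifts the difficulty onto a von Neumann type inequality, namely that $\overline G$ is a spectral set for $M_z$, which again is exactly what subnormality of $M_z$ delivers.
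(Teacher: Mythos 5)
Your argument for the inequality \(\|M_\varphi\|\ge\|\varphi\|_\infty\) is correct and complete \emph{granted} that \(M_\varphi\) is already a bounded operator on \(\mathcal H\): the computation \(M_\varphi^\ast K_w=\overline{\varphi(w)}\,K_w\) and the observation that \(K_w=Uh(\overline w)\neq 0\) are exactly right. The gaps are in the other half. First, boundedness is not ``free from the closed graph theorem'': the closed graph theorem, as used in the paragraph preceding Theorem \ref{main7}, only upgrades the inclusion \(\varphi\cdot\mathcal H\subseteq\mathcal H\) to norm boundedness of \(M_\varphi\); that inclusion itself, i.e.\ the assertion that every \(\varphi\in H^\infty(G)\) is a multiplier of \(\mathcal H\), is part of what Lemma \ref{mult} claims and is not automatic. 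Second, and more seriously, your upper bound \(\|M_\varphi\|\le\|\varphi\|_\infty\) is never established: you reduce it to the statements that \(M_z\) on \(\mathcal H\) is subnormal, that the relevant measure \(\mu\) is supported in \(\overline G\), and that \(\|\varphi\|_{L^\infty(\mu)}\le\sup_G|\varphi|\), and you explicitly leave all of these unverified. This gap cannot be closed from the hypotheses at hand: conditions (i)--(iii) of Theorem \ref{main7} do not imply subnormality of \(M_z\), and in fact do not imply the conclusion of the lemma. A concrete instance within the stated generality is \(T=M_z^\ast\) on the Dirichlet space over \(\mathbb D\) with \(h(z)=K_{\overline z}\): conditions (i)--(iii) hold, the resulting model space \(\mathcal H\) is the Dirichlet space itself, and there \(H^\infty(\mathbb D)\) contains functions that are not multipliers at all (already \(\varphi\cdot 1=\varphi\) need not lie in the space), while even for multipliers the operator norm generally exceeds the supremum norm.

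As for the comparison you were asked to make: the paper does not prove Lemma \ref{mult} at all; it quotes it as Lemma 5 of Shields and Wallen \cite{SW}, whose setting carries additional structure beyond (i)--(iii) under which the conclusion does hold. So your instinct that the upper bound is the substantive part, and that it must come from subnormality of \(M_z\) (equivalently a von Neumann--type inequality making \(\overline G\) a spectral set for \(M_z\)), or from a concrete \(L^2(\mu)\) realization as in the Kriete--Trutt model \cite{KT1}, points at the right kind of missing ingredient; but as written your text is a plan conditioned on an unproved---and, in the stated generality, unprovable---hypothesis, not a proof.
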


Another tool for the proof of Theorem \ref{model} is a result that has been extracted with slight modifications from the proof of the main theorem in the paper by Gonz\' alez and the second author~\cite{GLS}.
\begin{lemma} \label{comm}  Let  \(T \in \mathcal B(H)\) be an operator as in Theorem \ref{main7} and let  \(X \in \mathcal B(H).\) The following  are equivalent:
\begin{enumerate}
\item[(a)] \(TX=XT,\) 
\item[(b)]  there is a bounded analytic function \(\varphi \colon G \to \mathbb C\) such that  for all \(z \in G,\)  
\begin{align}
\label{psi}
Xh(z)=\varphi (z) h(z).
\end{align}
\end{enumerate}
\end{lemma}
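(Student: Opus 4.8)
The plan is to prove the two implications separately, the crux being the passage from the commutation relation to a scalar-valued symbol, which is forced by the one-dimensionality of the eigenspaces together with the totality of the family $\{h(z)\}$. For the implication $(a)\Rightarrow(b)$, I would first note that $h(z)\in\ker(T-z)$ means $Th(z)=zh(z)$, so assuming $TX=XT$ gives
\[
T\bigl(Xh(z)\bigr)=X\bigl(Th(z)\bigr)=z\,Xh(z),
\]
whence $Xh(z)\in\ker(T-z)$. Since $\dim\ker(T-z)=1$ and $h(z)\neq0$ spans that kernel, there is a unique scalar $\varphi(z)$ with $Xh(z)=\varphi(z)h(z)$, and this defines the candidate symbol $\varphi\colon G\to\mathbb C$. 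Its boundedness is then immediate, since $|\varphi(z)|\,\|h(z)\|=\|Xh(z)\|\le\|X\|\,\|h(z)\|$ together with $\|h(z)\|>0$ forces $|\varphi(z)|\le\|X\|$ throughout $G$.

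The delicate point is the analyticity of $\varphi$, which I would establish locally. Fixing $z_0\in G$, since $h(z_0)\neq0$ I can choose $g\in H$ with $\langle h(z_0),g\rangle\neq0$, and by continuity of $z\mapsto\langle h(z),g\rangle$ this pairing remains nonzero on a neighbourhood $V$ of $z_0$. Pairing the identity $Xh(z)=\varphi(z)h(z)$ with $g$ then yields
\[
\varphi(z)=\frac{\langle Xh(z),g\rangle}{\langle h(z),g\rangle},\qquad z\in V.
\]
Both numerator and denominator are analytic, being bounded linear functionals applied to the analytic $H$-valued maps $z\mapsto Xh(z)$ and $z\mapsto h(z)$, and the denominator is nonvanishing on $V$; hence $\varphi$ is analytic near $z_0$, and since $z_0$ was arbitrary, $\varphi\in H^\infty(G)$.

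For the converse $(b)\Rightarrow(a)$, I would simply compute on the total family: using $Th(z)=zh(z)$ and $Xh(z)=\varphi(z)h(z)$,
\[
TXh(z)=\varphi(z)\,Th(z)=z\varphi(z)\,h(z)=z\,Xh(z)=XTh(z).
\]
Thus the bounded operators $TX$ and $XT$ agree on every $h(z)$, hence on the linear subspace ${\rm span}\{h(z):z\in G\}$, which is dense because $\{h(z)\}$ is total; by continuity $TX=XT$ on all of $H$.

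The main obstacle is precisely the analyticity in the first implication. The tempting closed formula $\varphi(z)=\langle Xh(z),h(z)\rangle/\|h(z)\|^2$ fails to be analytic because of the conjugate-linearity of the inner product in its second slot, so the key idea is to pair against a \emph{fixed} vector $g$ rather than against $h(z)$ itself; the only thing that then requires care is the selection of such a $g$ with nonvanishing pairing near each point, which follows at once from $h(z_0)\neq0$ and continuity. Everything else is routine linear algebra in the one-dimensional eigenspaces and a density argument.
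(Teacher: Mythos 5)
Your proof is correct and follows essentially the same route as the paper's: derive $Xh(z)\in\ker(T-z)$ from the commutation relation, use the one-dimensionality of the kernel to define $\varphi$, establish analyticity locally via the quotient $\langle Xh(z),g\rangle/\langle h(z),g\rangle$ against a fixed vector $g$ with nonvanishing pairing near $z_0$, and prove the converse by checking equality on the total family $\{h(z)\colon z\in G\}$. Your remark on why one must pair against a fixed $g$ rather than $h(z)$ itself is a nice clarification, but the argument itself is the paper's.
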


\begin{proof} First of all, if \(TX=XT\) then \(TXh(z)=XTh(z)=z Xh(z),\) so that \(Xh(z) \in  \ker (T- z)\) and it follows from (i) that  there is a  function \(\varphi \colon G \to \mathbb C\) such that
\(Xh(z)=\varphi (z) h(z).\) We claim that \(\varphi\) is analytic.  Let \(z_0 \in G\) and let \(g \in H \backslash \{0\}\) such that \(\langle h(z_0),g^\ast \rangle \neq 0.\)  Then we have
\begin{align}
\varphi(z)= \frac{\langle Xh(z), g \rangle}{\langle h(z), g \rangle},
\end{align}
so that \(\varphi\) is analytic at \(z_0\)  because it is the quotient of two analytic functions where the denominator does not vanish in a neighborhood of  \(z_0.\)
Also, it is clear that \(\varphi\) is bounded with \(\|\varphi\|_\infty \leq \|X\|.\) Conversely,  suppose   (b) holds. We have
\begin{align*}
TXh(z)& =\varphi(z)Th(z)\\
& = z \varphi(z)h(z)\\
& = z Xh(z) \\
& = XTh(z).
\end{align*}
Finally, it follows from (iii) that \(TX=XT.\)
\end{proof}

The next  result is the key   to the factorization of an extended eigenoperator.
\begin{lemma}
\label{extend}
Let \(T\) be an operator as in Theorem \ref{main7} and let  \(\lambda\) be an extended eigenvalue  of  \(T.\) Let us suppose that \(\lambda\) satisfies 
\(\lambda \cdot G \subseteq G\) and let \(X\) be a corresponding extended eigenoperator. Then there exists an analytic function  \(\varphi \colon G \to \mathbb C\) such that  for all \(z \in G\) we have 
\begin{align}
\label{factor7}
Xh(z)=\varphi (z) h(\lambda z).
\end{align}
\end{lemma}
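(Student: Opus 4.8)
The plan is to mimic the argument of Lemma~\ref{comm}, replacing the commutation relation by the extended eigenvalue relation and keeping careful track of the factor~\(\lambda.\) First I would feed an eigenvector of \(T\) into the intertwining relation. Fix \(z \in G\) and apply \(T\) to \(Xh(z)\); using \(TX = \lambda XT\) together with \(Th(z) = z h(z)\) from condition~(ii) gives
\[
T\big(Xh(z)\big) = \lambda X\big(Th(z)\big) = \lambda z\, Xh(z),
\]
so that \(Xh(z) \in \ker(T - \lambda z).\)

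Next I would invoke the hypothesis \(\lambda \cdot G \subseteq G.\) Since \(\lambda z \in G,\) conditions~(i) and~(ii) are available at the point \(\lambda z\): the kernel \(\ker(T - \lambda z)\) is one-dimensional and is spanned by the nonzero vector \(h(\lambda z).\) Hence there is a unique scalar, call it \(\varphi(z),\) with
\[
Xh(z) = \varphi(z)\, h(\lambda z),
\]
which defines a function \(\varphi \colon G \to \mathbb C\) and establishes the pointwise identity~(\ref{factor7}). This step is where the extra hypothesis \(\lambda \cdot G \subseteq G\) is used, and it is essentially the only new ingredient compared with Lemma~\ref{comm}: its sole purpose is to guarantee that \(h(\lambda z)\) exists and that the one-dimensionality condition~(i) may be applied at \(\lambda z\) rather than at \(z.\)

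It remains to show that \(\varphi\) is analytic, by the same local quotient argument as in Lemma~\ref{comm}. Fix \(z_0 \in G;\) since \(h(\lambda z_0) \neq 0,\) I can choose \(g \in H\) with \(\langle h(\lambda z_0), g \rangle \neq 0\) (for instance \(g = h(\lambda z_0)\)). Pairing the identity with \(g\) yields
\[
\varphi(z) = \frac{\langle Xh(z), g \rangle}{\langle h(\lambda z), g \rangle}
\]
on a neighborhood of \(z_0\) where the denominator does not vanish. The numerator is analytic because \(z \mapsto Xh(z)\) is an analytic \(H\)-valued map (the composition of the analytic map \(h\) with the bounded operator \(X\)) paired with the fixed vector \(g,\) and the denominator is analytic because \(z \mapsto h(\lambda z)\) is a composition of analytic maps. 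Thus \(\varphi\) is locally a quotient of analytic functions with nonvanishing denominator, hence analytic at \(z_0;\) as \(z_0\) is arbitrary, \(\varphi\) is analytic on all of \(G.\)

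I do not anticipate a genuine obstacle, since the skeleton follows Lemma~\ref{comm} almost verbatim. The one point that requires care is precisely the use of \(\lambda \cdot G \subseteq G,\) which one must cite explicitly to legitimately apply~(i) at the shifted point \(\lambda z;\) without it neither \(h(\lambda z)\) nor the one-dimensionality of \(\ker(T-\lambda z)\) would be guaranteed. Note that, unlike Lemma~\ref{comm}, the present statement does not assert boundedness of \(\varphi,\) so that estimate need not be reproduced.
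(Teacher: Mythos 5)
Your proposal is correct and follows essentially the same route as the paper's own proof: feed the eigenvector \(h(z)\) into the relation \(TX=\lambda XT\) to get \(Xh(z)\in\ker(T-\lambda z),\) use \(\lambda\cdot G\subseteq G\) together with conditions (i) and (ii) at the point \(\lambda z\) to define \(\varphi(z),\) and establish analyticity by the local quotient \(\varphi(z)=\langle Xh(z),g\rangle/\langle h(\lambda z),g\rangle\) with nonvanishing denominator. Your explicit remarks on where the hypothesis \(\lambda\cdot G\subseteq G\) is used only make precise what the paper leaves implicit.
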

\begin{proof} First of all, since \(X\) is an extended eigenoperator corresponding to \(\lambda\) and since \(h(z)\) is an eigenvector corresponding to \(z,\) we get
\begin{align*}
TXh(z)=\lambda XTh(z)= \lambda z Xh(z)
\end{align*}
 for every \(z \in G.\) This means that  \(Xh(z) \in \ker (T-\lambda z),\) and  it follows from (i) that there is a function \(\varphi \colon G  \to \mathbb C\) such that \(Xh(z)=\varphi(z) h(\lambda z).\) 
We claim that \(\varphi\) is analytic. Indeed, let \(z_0 \in G\) and let \(g \in H\) such that \(\langle f(\lambda z_0), g \rangle \neq 0.\) Then 
\begin{align}
\varphi(z) = \frac{\langle Xh(z), g \rangle}{\langle h(\lambda z),g \rangle},
\end{align}
so that \(\varphi\) is analytic  at \(z_0\)  because it is the quotient of two analytic functions where the denominator does not vanish in a neighborhood of \(z_0.\)
\end{proof} 

We say that an analytic reproducing kernel space \({\mathcal H}\) is {\em dilation invariant} provided that, for every \(\lambda \in \mathbb C\) such that \(\lambda G \subseteq G,\) the composition operator \(Y_0\) defined by the expression
\begin{align}
\label{basic2}
(Y_0\hat{f})(z)= \hat{f}(\lambda z).
\end{align}
is a bounded linear operator on \(\mathcal H.\)
\begin{lemma}
\label{eigen}
Let us suppose that  the model space \({\mathcal H}\)  of Theorem \ref{main7}  is dilation invariant,  let \(\lambda\) be a complex scalar such that \(\lambda G \subseteq G,\) let \(Y_0\) be the composition operator defined on \({\mathcal H}\) by equation (\ref{basic2}), and set \(X_0=U^\ast Y_0U.\) Then \(\lambda\) is an extended eigenvalue for \(T\) and  \(X_0\) is a corresponding  extended eigenoperator. 
\end{lemma}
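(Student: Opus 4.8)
The plan is to exploit the unitary model furnished by Theorem~\ref{main7}. Recall that the operator \(U \colon H \to \mathcal H\) satisfies \(U\,h(\overline w)=K_w,\) where \(K_w\) is the reproducing kernel of \(\mathcal H\) at \(w,\) and that it intertwines \(T^\ast\) with the multiplication operator \(M_z;\) meanwhile \(Y_0\) is the composition operator \(\hat f \mapsto \hat f(\lambda\,\cdot\,).\) Since \(\mathcal H\) is dilation invariant and \(\lambda G \subseteq G,\) the operator \(Y_0\) is bounded on \(\mathcal H,\) and because \(U\) is unitary the operator \(X_0=U^\ast Y_0 U\) is a bounded operator on \(H.\) Moreover \(Y_0\) is nonzero (indeed injective when \(\lambda \neq 0,\) by the identity principle, since an analytic function vanishing on the nonempty open set \(\lambda G \subseteq G\) vanishes on all of \(G\)), whence \(X_0 \neq 0.\) The goal is then to prove the intertwining relation \(TX_0=\lambda X_0 T.\)

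The core of the argument is to establish the identity
\[
X_0\, h(z) = h(\lambda z), \qquad z \in G,
\]
which is meaningful precisely because \(\lambda G \subseteq G.\) To prove it I would transport the statement to \(\mathcal H.\) Applying \(U\) and using \(U\,h(\overline w)=K_w,\) the identity is equivalent to \(Y_0 K_w = K_{\overline\lambda w}\) for every \(w \in G,\) that is, to the homogeneity of the analytic kernel, \(K(\lambda z,w)=K(z,\overline\lambda w)\) for all \(z,w \in G.\) I expect this to be the main obstacle: the remaining steps are formal, whereas here one must control how the composition operator interacts with the reproducing kernel. This is exactly where the dilation invariance of \(\mathcal H\) and the one dimensionality of \(\ker(T-z)\) from hypothesis (i) are genuinely used, and I would verify it by combining the definition \(\hat f(z)=\langle f,h(\overline z)\rangle\) of the transform with the reproducing property and the boundedness of \(Y_0.\)

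Once \(X_0\,h(z)=h(\lambda z)\) is available, the conclusion follows exactly as in the computation carried out for the bilateral weighted shift in Section~\ref{sec:bilateral}. Since \(h(z) \in \ker(T-z)\) and, by the identity just proved, \(X_0 h(z)=h(\lambda z) \in \ker(T-\lambda z),\) we obtain
\[
T X_0 h(z)=T\,h(\lambda z)=\lambda z\,h(\lambda z)=\lambda z\,X_0 h(z)=\lambda X_0\,(z\,h(z))=\lambda X_0 T h(z)
\]
for every \(z \in G.\) Because the family \(\{h(z)\colon z\in G\}\) is a total subset of \(H\) by hypothesis (iii), the two bounded operators \(TX_0\) and \(\lambda X_0 T\) agree on the dense linear span of this family and hence coincide, by continuity. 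Therefore \(TX_0=\lambda X_0 T\) with \(X_0 \neq 0,\) which shows that \(\lambda\) is an extended eigenvalue for \(T\) and that \(X_0\) is a corresponding extended eigenoperator, as desired.
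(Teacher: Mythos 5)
Your proposal follows the paper's proof step for step: like the paper, you reduce everything to the single claim \(X_0h(z)=h(\lambda z)\) for \(z\in G,\) and your closing computation combined with the totality hypothesis (iii) is exactly the paper's. The genuine gap is that this claim --- which you yourself call the main obstacle --- is never established, and the route you sketch for it cannot work. The reproducing property controls the \emph{adjoint} of a composition operator, not the operator itself: for any composition operator one has \(C_\varphi^\ast K_w=K_{\varphi(w)},\) so here \(Y_0^\ast K_w=K_{\lambda w},\) and nothing in the hypotheses forces \(Y_0K_w\) to be a kernel function at all. The identity you need, \(Y_0K_w=K_{\overline{\lambda}w},\) i.e.\ the symmetry \(K(\lambda z,w)=K(z,\overline{\lambda}w),\) is an additional structural property of the kernel which does not follow from hypotheses (i)--(iii) of Theorem \ref{main7} together with boundedness of \(Y_0.\) What does follow formally from (\ref{basic2}) is the relation \(Y_0M_z=\lambda M_zY_0,\) which after conjugation by \(U\) gives \(X_0T^\ast=\lambda T^\ast X_0,\) equivalently \(TX_0^\ast=\overline{\lambda}X_0^\ast T\) --- an intertwining relation involving the adjoint and the conjugate scalar, not the asserted \(TX_0=\lambda X_0T.\) A concrete example where the two genuinely differ is the model for \(C_1\) discussed in Section \ref{sec:factor}: there \((Uh(z))(\xi)=\langle h(z),h(\overline{\xi})\rangle=(1/z+1/\xi-1)^{-1},\) so for \(0<\lambda<1\) the required identity \((Uh(z))(\lambda\xi)=(Uh(\lambda z))(\xi)\) amounts to \((1-1/\lambda)(1/z-1/\xi)=0,\) which fails whenever \(z\neq\xi.\) In that model \(U^\ast Y_0U\) turns out to be the adjoint \(X_0^\ast\) of the weighted composition operator of Theorem \ref{thm:suffic}, and one checks directly (for instance on \(f=1\)) that \(C_1X_0^\ast\neq\lambda X_0^\ast C_1.\)

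In fairness, the paper's own proof makes exactly the same leap: the chain \(UX_0h(z)=Y_0Uh(z)=Uh(\lambda z)\) is presented as an observation, and the middle equality is precisely the unproved kernel symmetry. The leap is harmless in the application the paper actually makes, the bilateral weighted shift, because there \(\langle h(z),h(w)\rangle\) depends only on the product \(z\overline{w},\) so composition by \(\lambda z\) really does carry kernel functions to kernel functions. So you have reproduced the paper's argument, gap included; to obtain a complete proof one must either assume the symmetry \(K(\lambda z,w)=K(z,\overline{\lambda}w)\) as a hypothesis (equivalently, formulate dilation invariance as the statement that the map \(K_w\mapsto K_{\overline{\lambda}w}\) extends to a bounded operator on \(\mathcal H\)), after which your computation and the paper's are both correct, or else settle for the weaker conclusion \(TX_0^\ast=\overline{\lambda}X_0^\ast T\) that the stated hypotheses actually yield.
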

\begin{proof} We claim that \(X_0h(z)=h(\lambda z)\) for every \(z \in G.\) The result then follows easily because
\begin{align*}
TX_0h(z) & =Th(\lambda z)\\
& =\lambda z h(\lambda z)\\
& = \lambda z X_0h(z)\\
& = \lambda X_0Th(z),
\end{align*}
and from (iii) we get \(TX_0=\lambda X_0T.\)
Now, for the proof of our claim,  observe that \(UX_0=Y_0U,\) so that
\(
UX_0h(z)  = Y_0Uh(z)
= Uh(\lambda z),
\)
and the claim follows.
\end{proof}

\begin{theorem} 
\label{model}
Suppose that  the model space \({\mathcal H}\)  of Theorem \ref{main7}  is dilation invariant and that the extended eigenoperator \(X_0\) of Theorem \ref{eigen} is bounded below, i.e., there is a constant \(c >0\) such that \(\|X_0f \| \geq c \|f\|.\) If \(X\) is an extended eigenoperator for \(T\) corresponding to \(\lambda\) then there exists \(R \in \{T\}^\prime\) such that \(X=X_0R.\)
\end{theorem}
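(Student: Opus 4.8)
The plan is to produce the commuting factor explicitly as $R=X_0^{-1}X$, which makes sense precisely because $X_0$ is bounded below. First I would record the action of the two operators on eigenvectors. By Lemma~\ref{extend} there is an analytic $\varphi\colon G\to\mathbb C$ with $Xh(z)=\varphi(z)h(\lambda z)$ for every $z\in G$, while Lemma~\ref{eigen} gives $X_0h(z)=h(\lambda z)$. Combining these, $Xh(z)=\varphi(z)X_0h(z)=X_0(\varphi(z)h(z))$, which already identifies the candidate factor as the operator $R$ determined on eigenvectors by $Rh(z)=\varphi(z)h(z)$. The task is then to realize $R$ as a genuine bounded operator and to check that $R\in\{T\}^\prime$.

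The key step will be to show that ${\rm ran}\,X\subseteq{\rm ran}\,X_0$. Since $X_0$ is bounded below, it is injective, its range is closed, and $X_0^{-1}\colon{\rm ran}\,X_0\to H$ is bounded with $\|X_0^{-1}\|\le 1/c$. Now each vector $Xh(z)=X_0(\varphi(z)h(z))$ lies in ${\rm ran}\,X_0$; because $\{h(z)\colon z\in G\}$ is total and $X$ is continuous, the closed linear span of $\{Xh(z)\colon z\in G\}$ contains ${\rm ran}\,X$, and this closed span is contained in the closed subspace ${\rm ran}\,X_0$. This inclusion lets me define $R=X_0^{-1}X$, a bounded operator on $H$ with $\|R\|\le\|X\|/c$, and it satisfies $X_0R=X$ because $X_0X_0^{-1}$ is the identity on ${\rm ran}\,X_0$, which contains ${\rm ran}\,X$.

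It remains to verify that $R\in\{T\}^\prime$. Applying $X_0^{-1}$ to $Xh(z)=\varphi(z)h(\lambda z)$ and using $X_0^{-1}h(\lambda z)=h(z)$ (a consequence of $X_0h(z)=h(\lambda z)$) yields $Rh(z)=\varphi(z)h(z)$ for every $z\in G$, with $\varphi$ analytic and now bounded since $R$ is bounded. By Lemma~\ref{comm} this is exactly the condition for $R$ to commute with $T$; equivalently, one computes directly that $TRh(z)=\varphi(z)Th(z)=z\varphi(z)h(z)=RTh(z)$ and invokes the totality of $\{h(z)\}$ together with the boundedness of $T$ and $R$ to pass from eigenvectors to the operator identity $TR=RT$. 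This gives $X=X_0R$ with $R\in\{T\}^\prime$, as desired.

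The main obstacle I anticipate is the range inclusion ${\rm ran}\,X\subseteq{\rm ran}\,X_0$, since everything hinges on combining the pointwise factorization through $X_0$ with the closedness of ${\rm ran}\,X_0$ (furnished by the bounded-below hypothesis) and the totality of the eigenvectors, so that a factorization valid on the dense set of eigenvectors propagates to all of $H$. I note that Lemma~\ref{mult} would be needed only along the alternative route that builds $R$ directly as (the adjoint of) the multiplier attached to $\varphi$ on the model space $\mathcal H$ of Theorem~\ref{main7}, where one must instead establish $\varphi\in H^\infty(G)$; the bounded-below construction above sidesteps that estimate.
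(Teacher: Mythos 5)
Your proof is correct, but it takes a genuinely different route from the paper. The paper never forms $X_0^{-1}$: it first uses the bounded-below hypothesis only to deduce $|\varphi(z)|\le\|X\|/c$ from $Xh(z)=\varphi(z)X_0h(z)$, so that $\psi(z)=\overline{\varphi(\overline z)}$ lies in $H^\infty(G)$; it then invokes Lemma~\ref{mult} to get a bounded multiplication operator $M_\psi$ on the model space $\mathcal H$, defines $R=U^\ast M_\psi^\ast U$, and carries out a reproducing-kernel computation to show $Rh(z)=\varphi(z)h(z)$, after which Lemma~\ref{comm} gives $R\in\{T\}^\prime$ and totality gives $X=X_0R$. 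You instead exploit the bounded-below hypothesis for what it gives operator-theoretically: $X_0$ is injective with closed range and $X_0^{-1}$ is bounded on $\operatorname{ran}X_0$; the inclusion $\operatorname{ran}X\subseteq\operatorname{ran}X_0$ (correctly deduced from $Xh(z)=X_0(\varphi(z)h(z))$, totality of $\{h(z)\}$, continuity of $X$, and closedness of $\operatorname{ran}X_0$) then lets you set $R=X_0^{-1}X$, so the identity $X=X_0R$ holds by construction rather than by a density argument, and commutation follows from $Rh(z)=\varphi(z)h(z)$ via Lemma~\ref{comm} (or your direct computation, which only needs the easy direction and totality). What each approach buys: the paper's construction exhibits $R$ concretely as the adjoint of a multiplier on $\mathcal H$, staying inside the function-theoretic model that also describes the commutant, and it records the explicit bound $\|\varphi\|_\infty\le\|X\|/c$; your construction is more elementary, bypasses Lemma~\ref{mult} and the kernel computation entirely, and yields the norm bound $\|R\|\le\|X\|/c$ for the commuting factor itself.
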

\begin{proof}
First of all, apply Lemma  \ref{extend} to find an analytic function  \(\varphi \colon G \to \mathbb C\) such that  for all \(z\in G,\)
\begin{align}
\label{psi3}
Xh(z)=\varphi (z) h(\lambda z).
\end{align}
Notice that \(Xh(z)=\varphi(z) X_0h(z),\) and since \(X_0\) is bounded below, we get
\[
|\varphi(z)| = \frac{\|Xh(z)\|}{\|X_0h(z)\|} \leq \frac{1}{c} \cdot \frac{\|Xh(z)\|}{\|h(z)\|} \leq \frac{1}{c} \cdot \|X\|,
\]
so that \(\varphi\) is bounded.  Then, consider the  analytic function \(\psi(z)=\overline{\varphi(\overline{z})}.\)  Thus,  \(\psi \in H^\infty(G),\) and according to Lemma \ref{psi}, the multiplication  operator \(M_\psi\) defined by \(M_\psi f = \psi \cdot f\) is  a bounded   linear operator  on~\(\mathcal H.\)  Next, consider the operator \(R=U^\ast M_\psi^\ast  U.\) We claim  that for all \(z \in G\) we have
\begin{align}
\label{joy}
Rh(z) & =\varphi(z)h(z).
\end{align}
Indeed, from the definition of \(R\) we have
\[
URh(z)= M_\psi^\ast Uh(z),
\]
so that for all \(z, \xi \in G\) we get
\begin{align*}
[URh(z)](\xi) & = [M_\psi^\ast Uh(z)] (\xi)  \\
& =  \langle M_\psi^\ast U h(z),Uh(\overline{\xi}) \rangle \\
& = \langle  U h(z), M_\psi Uh(\overline{\xi})\rangle \\
& =  \overline{\langle  M_\psi Uh(\overline{\xi}), U h(z)  \rangle} \\
& = \overline{[M_\psi Uh(\overline{\xi})](\overline{z})}\\
& = \overline{\psi(\overline{z})} \cdot \overline{ [ Uh(\overline{\xi})](\overline{z})}\\
& = \varphi(z) \cdot \overline{\langle Uh(\overline{\xi}), Uh(z) \rangle}\\
& =  \varphi(z)  \cdot \langle Uh(z), Uh(\overline{\xi}) \rangle\\
& = \varphi(z)  \cdot [Uh(z)](\xi),
\end{align*}
so that \(URh(z)=\varphi(z) Uh(z)\) for all \(z \in G\) and the claim follows. Finally, it follows from equation (\ref{joy}) and Lemma \ref{comm} that \(R \in \{T\}^\prime.\) Moreover,
\(
Xh(z)=\varphi(z)X_0h(z)= X_0Rh(z)
\)
for all \(z \in G,\) and it follows from (iii) that \(X=X_0R,\) as we wanted.
\end{proof}

Let \(W\) be an injective  bilateral weighted shift   on an infinite-dimensional, separable complex Hilbert space \(H,\)  so that for every \(n \in \mathbb Z\) we have 
\begin{align}
We_n = w_n e_{n+1},
\end{align}
where \((e_n)_{n \in \mathbb Z}\) is an orthonormal basis of \(H\) and  the  sequence  \( (w_n)_{n \in \mathbb Z}\)  of non-zero weights  is bounded. 
Recall that the point spectrum of \(W\) is the open annulus \( G=\{z \in \mathbb{C} \colon r_3^+(W) < | z | < r_2^-(W)\}.\) Also, recall  that   every \(z \in G\)  is a simple eigenvalue of \(W\)  and a corresponding eigenvector is given by 
\begin{align}
\label{eigenshift}
h(z)= e_0 + \sum_{n=1}^\infty  \frac{w_0 \cdots w_{n-1}}{z^n}\, e_n + \sum_{n=1}^\infty \frac{z^n}{w_{-1} \cdots w_{-n}} \, e_{-n}.
\end{align}

It is easy to see that conditions (i), (ii) and (iii) of Theorem \ref{main7} are satisfied. Then,
let \(\lambda \in \mathbb T\) and consider the unitary diagonal operator \(X_0={\rm diag}\, (\lambda^{-n})_{n \in \mathbb Z}.\) A direct computation shows that that   \(X_0\) is an extended eigenoperator for \(W\) corresponding to the extended eigenvalue \(\lambda,\) and moreover, \(X_0h(z)=h(\lambda z).\) Therefore, the model space \(\mathcal H\) of Theorem \ref{main} is dilation invariant, and the operator \(X_0\) is bounded below. Thus, we get the following

\begin{corollary}
\label{shift}
Let \(W\) be an injective bilateral weighted shift  on an infinite dimensional, separable complex Hilbert space and   suppose that  \(r_3^+(W) < r_2^-(W).\) Let   \(X\) be an extended eigenoperator for \(W\)  correponding to some extended eigenvalue \(\lambda \in \mathbb T.\) Then \(X\) admits a factorization 
\[
X=X_0 R,
\] 
where \(X_0={\rm diag}\, (\lambda^{-n})_{n \in \mathbb Z}\) is a unitary diagonal operator (a particular  extended eigenoperator for \(T\)) and where \(R \in \{W\}^\prime.\)
\end{corollary}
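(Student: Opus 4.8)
The plan is to derive the corollary directly from Theorem \ref{model}, so the whole task amounts to checking that the injective bilateral weighted shift \(W,\) together with an extended eigenvalue \(\lambda \in \mathbb T,\) fits the hypotheses of that theorem. First I would verify conditions (i), (ii) and (iii) of Theorem \ref{main7} on the open annulus \(G = \{z \in \mathbb C \colon r_3^+(W) < |z| < r_2^-(W)\}.\) Condition (i) is the simplicity of each eigenvalue \(z \in G,\) condition (ii) is witnessed by the explicit eigenvector \(h(z)\) of equation (\ref{eigenshift}), and condition (iii), the totality of \(\{h(z) \colon z \in G\},\) was already established in the course of the proof of Theorem \ref{shift2}. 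This produces the analytic reproducing kernel space \(\mathcal H\) and the unitary operator \(U \colon H \to \mathcal H\) with \(W^\ast = U^\ast M_z U.\)

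The second step is to establish that \(\mathcal H\) is dilation invariant and that the distinguished operator \(X_0\) is bounded below. Since \(\lambda \in \mathbb T\) we have \(|\lambda|=1,\) so \(|\lambda z| = |z|\) for every \(z,\) whence \(\lambda G = G\) and in particular \(\lambda G \subseteq G.\) A direct computation on the series (\ref{eigenshift}) shows that the diagonal operator \(X_0 = {\rm diag}\,(\lambda^{-n})_{n \in \mathbb Z}\) satisfies \(X_0 h(z) = h(\lambda z).\) Transporting this identity through \(U,\) and using \(X_0^\ast h(\bar z) = h(\overline{\lambda z})\) together with \((Uf)(z) = \langle f, h(\bar z) \rangle,\) one checks that \(Y_0 := U X_0 U^\ast\) acts as the composition operator \((Y_0 \hat f)(z) = \hat f(\lambda z),\) which is therefore bounded because \(X_0\) is; this is precisely dilation invariance. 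Moreover, since \(|\lambda^{-n}|=1\) for every \(n,\) the operator \(X_0\) is unitary, hence an isometry and thus bounded below with constant \(c=1.\)

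With these verifications in place, Theorem \ref{model} applies and yields, for an arbitrary extended eigenoperator \(X\) of \(W\) associated with \(\lambda,\) a factorization \(X = X_0 R\) with \(R \in \{W\}^\prime,\) which is the assertion of the corollary. I expect no serious obstacle here, since the preparatory paragraph preceding the statement already records the identity \(X_0 h(z) = h(\lambda z)\) and the unitarity of \(X_0.\) The only point that deserves genuine care is confirming that the transported operator \(Y_0\) really is the composition operator of equation (\ref{basic2}), so that the hypothesis that \(\mathcal H\) be dilation invariant is verified rather than merely asserted, and recalling that the totality of \(\{h(z) \colon z \in G\}\) is what upgrades the pointwise identity \(X h(z) = X_0 R h(z)\) to the operator equation \(X = X_0 R.\)
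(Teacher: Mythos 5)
Your proposal is correct and follows essentially the same route as the paper: verify hypotheses (i)--(iii) of Theorem \ref{main7} on the annulus \(G\) using the eigenvectors (\ref{eigenshift}), observe that the unitary diagonal operator \(X_0\) satisfies \(X_0h(z)=h(\lambda z)\) (hence the model space is dilation invariant and \(X_0\) is bounded below), and then invoke Theorem \ref{model}. Your explicit computation that \(Y_0=UX_0U^\ast\) is the composition operator of (\ref{basic2}) simply fills in a detail the paper leaves as ``a direct computation.''
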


Let us see if Theorem \ref{main7} can be applied to  \(C_1.\) Let \(G= \{ z \in \mathbb C \colon |z-1|<1\}\) and let \(h \colon G \to L^2[0,1]\) be the analytic mapping defined by the expression 
\begin{align}
h(z)(x)=x^{(1-z)/z}.
\end{align} 
We have already seen that the conditions (i), (ii) and (iii) of Theorem \ref{main7} are fulfilled. Then, let \(0 < \lambda \leq 1\) and consider the weighted composition operator \(X_0\) defined on \(L^2[0,1]\) by the expression
\[
(X_0f)(x)=x^{(1-\lambda)/\lambda} f(x^{1/\lambda}).
\]
We know that \(X_0\) is bounded with \(\|X_0\| \leq \lambda^{1/2}\) and that  \(X_0h(z)=h(\lambda z).\) It follows  that the model space \(\mathcal H\) is dilation invariant. However, we cannot apply Theorem \ref{model} because \(X_0\) is not bounded below. Indeed, if \(X_0\) is bounded below then there is  a constant \(c>0\) such that  \(\| X_0f\| \geq c \|f\|\) for all \(f \in L^2[0,1],\) so that 
\begin{align*}
\frac{1}{c^2} &  \geq \frac{\|h(z)\|_2^2} {\|X_0h(z)\|_2^2} \\
& = \frac{\|f(z)\|_2^2}{\|f(\lambda z)\|_2^2}\\
& = \frac{\displaystyle{2\, {\rm Re}\, \frac{1-\lambda z}{\lambda z} +1}}{\displaystyle{2\, {\rm Re}\, \frac{1-z}{z} +1}} \\
& =  \frac{\displaystyle{2 \left ( \frac{1-\lambda}{\lambda}+ \frac{1}{\lambda} {\rm Re}\, \frac{1-z}{z} \right ) +1}}{\displaystyle{2\, {\rm Re}\, \frac{1-z}{z} +1}} \to \infty \quad \text{as } z \to 2,
\end{align*}
and this is a contradiction.

Let us see if Theorem \ref{main7} can be applied to the adjoint of the discrete Ces\`aro operator. We consider the operator \(T=VC_0^\ast V^\ast \in \mathcal B(H^2(\mathbb D))\) and the analytic mapping \(h \colon G \to H^2(\mathbb D)\) defined by the expression
\(
h(z)= Vg(z),
\)
so that 
\(
h(z)(\xi) = (1-\xi)^{(1-z)/z}.
\)
It is easy to see that \(h\) is analytic on \(G\) and  that  the conditions (i), (ii) and (iii) of Theorem~\ref{main7} are satisfied. However, we cannot apply Theorem~\ref{eigen} because the model space \(\mathcal H\) fails to be dilation invariant. Indeed, if \({\mathcal H}\) is dilation invariant then for every \(0 < \lambda <1\) there is a constant \(c>0\) such that \(\|h(\lambda z) \| \leq c \|h(z)\|.\) When  \(\lambda =1/2,\) we set \(z=1/(n+1)\) and   we get
\begin{align*}
h (z)(\xi) & = (1-\xi)^n,\\
h (z/2) (\xi) & =  (1-\xi)^{2n+1},
\end{align*}
so that for every \(n \in \mathbb N\) we have
\[
\|(1-\xi)^{2n+1}\|_{H^2(\mathbb D)}^2 \leq c^2 \|(1-\xi)^{n}\|_{H^2(\mathbb D)}^2. 
\]
Use the binomial theorem to get
\[
(1-\xi)^{n} = \sum_{k=0}^n (-1)^k \binom{n}{k} \xi^k.
\]
It follows from  Parseval's identity  that
\[
\|(1-\xi)^n\|_{H^2(\mathbb D)}^2 = \sum_{k=0}^n \binom{n}{k}^2 = \binom{2n}{n}.
\]
Then we have
\begin{align*}
c^2  & \geq \frac{\|(1-\xi)^{2n+1}\|_{H^2(\mathbb D)}^2}{\|(1-\xi)^n\|_{H^2(\mathbb D)}^2}\\
& =\frac{ \displaystyle{\binom{4n+2}{2n+1}}}{ \displaystyle{\binom{2n}{n}}}\\
& = \frac{(4n+2)! \,n! \, n!}{(2n+1)! \, (2n+1)! \, (2n)!}, \\ 
\end{align*}
but  using Stirling's formula, the last expression is approximately  \( 2^{2n+2},\) and this is   a contradiction.

\section{The infinite continuous  Ces\`aro operator on Hilbert space}
\label{infinite}
As we mentioned in the introduction,  in this section we show   that, in contrast with the operator \(C_1,\)  the set of extended eigenvalues for the operator \(C_\infty\) is as small as it can be, that is, it reduces to \(\{1\}.\)

There are several examples of Hilbert space operators with this property in the literature. It is worth mentioning some of them. Biswas and the third author~\cite{BP} showed that if \(Q \in \mathcal B(H)\) is a quasinilpotent operator then the set of extended eigenvalues for \(\alpha +Q\) for every complex number \(\alpha \neq 0\) reduces to~\(\{1\}.\) They also showed when   \(\dim H < \infty\) that   the set of extended eigenvalues for  \(T \in \mathcal B(H)\) reduces to \(\{1\}\) if and only if  \(\sigma(T)=\{\alpha\}\) for some complex number \(\alpha \neq 0.\) Finally,   an example was given by Shkarin~\cite{S}  of a compact quasinilpotent operator on a Hilbert space  whose  set of extended eigenvalues reduces to \(\{1\},\) answering at once  two questions raised by Biswas, Lambert and the third author~\cite{BLP}. 

Brown, Halmos and Shields \cite{BHS} proved that \(C_\infty\) is indeed a bounded linear operator, and they also proved that \(I-C_\infty^\ast\) is unitarily equivalent to a bilateral  shift of multiplicity one.  

Recall that a bounded linear operator \(U\) on a complex Hilbert space \(H\)  is a {\em bilateral  shift of multiplicity one} provided that  there is an orthonormal basis \((e_n)\) of \(H\)  such that   \(Ue_n=e_{n+1}\) for all \(n \in \mathbb Z.\)  

Consider a bilateral shift of multiplicity one \(U \in {\mathcal B}(L^2[0,1])\) and  a unitary operator  \(V \in {\mathcal B}(L^2[0,1])\)  such that \(I-C_\infty ^\ast=V^\ast UV.\) We have \[C_\infty=V^\ast (I-U^\ast)V,\] and  it follows that the extended eigenvalues of \(C_\infty\) are precisely the extended eigenvalues of \(I-U^\ast,\) and that  the extended eigenoperators of \(C_\infty\) are in one to one correspondence with   the extended eigenoperators of \(I-U^\ast\)  under conjugation with  \(V.\)
	
\begin{lemma}\label{lemma3jsp}
Let $X$ be an operator satisfying $(I-U^\ast)X=\lambda X(I-U^\ast)$, and let $\dots X_{-1},X_0,X_1,X_2,\dots$ be the rows of the matrix of $X$. Then $$X_{n+1}=\left(\lambda U+1-\lambda\right) X_n,$$ for all $n\in\mathbb{Z}$. Consequently, for any $m,n\in\mathbb{N}$, $$X_{m+n}=\left( \lambda U+1-\lambda\right)^{n} X_m.$$ In particular, if $m=0$, $X_n=\left( \lambda U+1-\lambda\right)^{n} X_0$, for all $n\in\mathbb{N}$.
\end{lemma}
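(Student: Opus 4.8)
The plan is to pass from the operator identity to a scalar recurrence among the matrix entries of $X$, and then to recognize that recurrence as exactly the claimed action of $\lambda U + 1 - \lambda$ on the rows. First I would fix the orthonormal basis $(e_n)_{n \in \mathbb Z}$ and write $x_{mn} = \langle X e_n, e_m \rangle$ for the entries of the matrix of $X$, so that the $m$-th row is identified with the vector $X_m = \sum_n x_{mn} e_n$. This vector genuinely lies in $H$: since $\sum_n |x_{mn}|^2 = \|X^\ast e_m\|^2 < \infty$ by the boundedness of $X^\ast$, each row is square summable, and so the bilateral shift $U$ (with $U^\ast e_n = e_{n-1}$) really does act on it.

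Next I would compute the $(m,n)$ entry of each side of $(I - U^\ast)X = \lambda X(I - U^\ast)$. Using $\langle U^\ast X e_n, e_m\rangle = \langle X e_n, U e_m\rangle = x_{m+1,n}$ and $\langle X U^\ast e_n, e_m\rangle = \langle X e_{n-1}, e_m\rangle = x_{m,n-1}$, the identity becomes $x_{mn} - x_{m+1,n} = \lambda x_{mn} - \lambda x_{m,n-1}$, which rearranges to the three-term recurrence
\[
x_{m+1,n} = (1-\lambda)\,x_{mn} + \lambda\,x_{m,n-1}.
\]

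The final step is the translation back to rows. Since $U X_m = \sum_n x_{mn} e_{n+1} = \sum_n x_{m,n-1} e_n$, the $n$-th coordinate of $(\lambda U + 1 - \lambda) X_m$ is precisely $\lambda x_{m,n-1} + (1-\lambda) x_{mn}$, which by the recurrence equals $x_{m+1,n}$, the $n$-th coordinate of $X_{m+1}$. As this holds for every $n$, I obtain $X_{m+1} = (\lambda U + 1 - \lambda) X_m$ for all $m \in \mathbb Z$, which is the first assertion. The consequence then follows by a straightforward induction on $n$: applying the first identity $n$ times yields $X_{m+n} = (\lambda U + 1 - \lambda)^n X_m$, and setting $m = 0$ gives the last statement.

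I expect no genuine obstacle here; the argument is a direct computation. The only point that requires care is the bookkeeping with index conventions, in particular keeping track of which index the shift $U$ acts on (the column index $n$, through the identification of a row with the vector $\sum_n x_{mn} e_n$) and the correct direction of $U$ versus $U^\ast$. Getting the signs and the placement of $\lambda$ right in the recurrence is exactly what makes the two coordinate expressions coincide, and it is also what guarantees the factor is $\lambda$ rather than $\overline{\lambda}$, consistent with the non-conjugated identification of the rows.
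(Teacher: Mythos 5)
Your proof is correct, and it proves exactly the recurrence the lemma asserts, but your bookkeeping differs from the paper's in a way worth noting. The paper's proof takes adjoints of the intertwining relation to get $X^\ast(I-U)=\overline{\lambda}(I-U)X^\ast$, reads off the vector-level identity $X^\ast e_{n+1}=(\overline{\lambda}U+1-\overline{\lambda})X^\ast e_n$ by applying this to basis vectors, and then recovers the rows through the identification $X_n=\overline{X^\ast e_n}$, using that entrywise conjugation intertwines $\overline{\lambda}U+1-\overline{\lambda}$ with $\lambda U+1-\lambda$ --- this is precisely where $\overline{\lambda}$ turns back into $\lambda$. You instead never take adjoints (except to note square-summability of the rows): you unpack the original relation $(I-U^\ast)X=\lambda X(I-U^\ast)$ into the scalar recurrence $x_{m+1,n}=(1-\lambda)x_{mn}+\lambda x_{m,n-1}$ and then repackage that recurrence as the action of $\lambda U+1-\lambda$ on the row vectors $X_m=\sum_n x_{mn}e_n$. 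The content is the same, but your version makes two things explicit that the paper leaves implicit: that each row is genuinely in the Hilbert space (via $\sum_n|x_{mn}|^2=\|X^\ast e_m\|^2<\infty$), and why the factor is $\lambda$ rather than $\overline{\lambda}$ under the non-conjugated identification of rows with vectors --- your closing remark on this is exactly the subtlety the paper's one-line conjugation step absorbs. The paper's proof is shorter; yours is more self-contained and leaves no sign conventions to the reader.
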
 
\begin{proof} Taking adjoints we obtain \(X^\ast (I-U) = \overline{\lambda} (I-U)X^\ast\) so that \(X^\ast e_n -X^\ast e_{n+1}= \overline{\lambda} (I-U)X^\ast e_n\) and therefore \(X^\ast e_{n+1}=(\overline{\lambda} U + 1-\overline{\lambda})X^\ast e_n.\) Hence, \(X_{n+1}= \overline{X^\ast e_{n+1}}= (\lambda U + 1-\lambda) \overline{X^\ast e_n}= (\lambda U + 1-\lambda) X_n.\)
\end{proof}

\begin{theorem}\label{thm4.3}
Let $U$ be a bilateral shift of multiplicity one, and let $\lambda$ be a complex number with $\lambda \neq 1.$ Then the equation $(I-U^*)X=\lambda X(I-U^*)$ has  only the trivial solution $X=0$. 
\end{theorem}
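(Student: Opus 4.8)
The plan is to reduce the operator equation to a scalar multiplication problem on the unit circle by diagonalizing the unitary $U$, and then to exploit the uniform boundedness of the rows of $X$. First I would invoke Lemma~\ref{lemma3jsp}: passing to adjoints turns the hypothesis into the recursion $Y_{n+1}=BY_n$, where $Y_n:=X^\ast e_n$ and $B:=\overline{\lambda}U+(1-\overline{\lambda})I$. Since $Y_n=X^\ast e_n$, one has the crucial uniform bound $\|Y_n\|\le\|X\|$ for every $n\in\mathbb Z$. The goal becomes to show $Y_n=0$ for all $n$, which yields $X^\ast=0$ and hence $X=0$. The trivial case $\lambda=0$ can be dispatched at once: then $(I-U^\ast)X=0$ forces the range of $X$ into $\ker(I-U^\ast)$, which is $\{0\}$ because the bilateral shift $U^\ast$ has no eigenvalues.

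For $\lambda\neq0,1$ I would diagonalize. Since $U$ is a bilateral shift of multiplicity one, it is unitarily equivalent to multiplication by $z$ on $L^2(\mathbb T)$, so $B$ becomes multiplication by $b(z)=\overline{\lambda}z+(1-\overline{\lambda})$. Writing $g_n\in L^2(\mathbb T)$ for the symbol of $Y_n$, the recursion reads $g_{n+1}=b\,g_n$, and iterating the relation of Lemma~\ref{lemma3jsp} gives both $g_n=b^{\,n}g_0$ for $n\ge0$ and, more generally, $g_m=b^{\,k}g_{m-k}$ for all $m\in\mathbb Z$ and $k\ge0$. As $z$ runs over $\mathbb T$ the values $b(z)$ trace the circle of radius $|\lambda|$ centered at $1-\overline{\lambda}$, whose extreme moduli are $|\lambda|+|1-\lambda|$ and $\bigl||\lambda|-|1-\lambda|\bigr|$ (the relation of these to $1$ is exactly Lemma~\ref{uni}).

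The heart of the argument is to show that $g_0$ is supported on the null set $\{|b|=1\}$. The forward half is immediate: $\int_{\mathbb T}|b|^{2n}|g_0|^2=\|g_n\|^2\le\|X\|^2$ for every $n\ge0$ forces $g_0=0$ a.e.\ on $\{|b|>1\}$. The backward half is the delicate point, because $B$ need not be invertible---precisely when $\mathrm{Re}\,\lambda=1/2$ the circle $b(\mathbb T)$ passes through the origin---so one cannot simply run the recursion in reverse. Instead I would truncate: for each $\varepsilon>0$, using $g_0=b^{\,k}g_{-k}$ and $\|g_{-k}\|\le\|X\|$,
\[
\int_{\{|b|<1-\varepsilon\}}|g_0|^2
=\int_{\{|b|<1-\varepsilon\}}|b|^{2k}|g_{-k}|^2
\le(1-\varepsilon)^{2k}\,\|X\|^2\longrightarrow0
\]
as $k\to\infty$, while the left-hand side is independent of $k$; hence $g_0=0$ a.e.\ on $\{|b|<1\}$. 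Combining the two halves, $g_0$ vanishes off $\{|b|=1\}$.

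Finally I would check that this set is null exactly when $\lambda\neq0,1$: expanding $|b(e^{i\theta})|^2=|\lambda|^2+|1-\lambda|^2+2\,\mathrm{Re}\bigl(\overline{\lambda}(1-\lambda)e^{i\theta}\bigr)$ shows it is a nonconstant trigonometric polynomial whenever the oscillating coefficient $\overline{\lambda}(1-\lambda)$ is nonzero, so $\{|b|=1\}$ is finite. Therefore $g_0=0$, i.e.\ $Y_0=0$; running the same reasoning from an arbitrary starting index $m$ gives $Y_m=0$ for every $m\in\mathbb Z$, whence $X=0$. The main obstacle is precisely the non-invertibility of $B$ on the line $\mathrm{Re}\,\lambda=1/2$, which the $\varepsilon$-truncation circumvents, together with the verification that $\{|b|=1\}$ is negligible---the step in which the hypothesis $\lambda\neq1$ is indispensable (at $\lambda=1$ one has $|b|\equiv1$ and the argument, correctly, breaks down).
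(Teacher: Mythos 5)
Your proof is correct and follows essentially the same route as the paper's: transfer the row recursion of Lemma~\ref{lemma3jsp} to $L^2(\mathbb{T})$ where $U$ acts as multiplication by $e^{it}$, use the uniform bound on the rows to force $X_0$ to vanish a.e.\ on $\{|b|>1\}$ (forward iteration) and on $\{|b|<1\}$ (negative indices), and observe that $\{|b|=1\}$ is a null set precisely when $\lambda\neq 0,1$. Your $\varepsilon$-truncation over $\{|b|<1-\varepsilon\}$ is a marginally cleaner way to handle the non-invertibility of $B$ than the paper's choice of a subset where $d\le|b|\le 1-d$, but the underlying argument is the same.
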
 
\begin{proof}
Let $A$ be a subset of the interval $[0,2\pi)$ such that $|\lambda e^{it}+1-\lambda|>1$ for all $t\in A$. Each row $X_n$ of the matrix for $X$ is a doubly infinite, square summable sequence of complex numbers, so it can be identified with a function in $L^2(\mathbb{T})$, with these complex numbers as its Fourier coefficients. Since every point on the unit circle is of the form $e^{it}$ for a unique $t\in [0,2\pi)$, the set $A$ corresponds to a subset $A'$ of $\mathbb{T}$. We will show that $X_0$ is equal to $0$ almost everywhere on $A'$. Indeed, if that was not the case, there would exist a set $A_0\subset A$ of positive measure and a constant $c>0$ such that $|X_0(t)|\ge c$ and $|\lambda e^{it}+1-\lambda|\ge 1+c$ for all $t\in A_0$. It would then follow that for every $n\in\mathbb{N}$,
\begin{align*}
\|X_n\|^2 &= \int_0^{2\pi} |X_n(t)|^2\,dt \\
& = \int_0^{2\pi} |\left( \lambda U+1-\lambda\right)^{n} X_0(t)|^2\,dt \\
& = \int_0^{2\pi} |\left( \lambda e^{it}+1-\lambda\right)^{n}|^2\,| X_0(t)|^2\,dt \\
& \ge \int_{A_0} |\left( \lambda e^{it}+1-\lambda\right)^{n}|^2 |X_0(t)|^2\,dt \\
& \ge \int_{A_0} (1+c)^{2n}c^2\,dt \to \infty, \mbox{ as }n\to\infty.
\end{align*}
Now we turn our attention to the set $B\subset [0,2\pi)$ such that $|\lambda e^{it}+1-\lambda|<1$ for all $t\in B$. Once again, $X_0$ is equal to $0$ for almost every $t\in B$. Otherwise, there would be a set $B_0\subset B$ of positive measure and a constant $d\in(0,1)$ such that $|X_0(t)|\ge d$ and $d\le |\lambda e^{it}+1-\lambda|\le 1-d$ for all $t\in B_0$. It would then follow that for every negative integer $n$,
\begin{eqnarray*}
\|X_n\|^2  & = \int_0^{2\pi} |X_n(t)|^2\,dt \ge \int_{B_0}  |X_n(t)|^2\,dt = \int_{B_0} |X_0(t)|^2 |\lambda e^{it}+1-\lambda|^{2n}\,dt \\
& \ge \int_{B_0} d^2(1-d)^{2n} \,dt  \to \infty, \mbox{ as }n\to -\infty.
\end{eqnarray*}
Thus, the function $X_0$ is zero almost everywhere on $A\cup B$. The complement of this set in $[0,2\pi)$ consists of two points. These are the points of intersection of the unit circle and the circle with center $(1-\lambda)/\lambda$ and radius $1/|\lambda|$. The only exceptions occurs when $\lambda=1$ and $\lambda=0$. In the former case, the two circles coincide, and in the latter $|\lambda e^{it}+1-\lambda|=1$ for all $t\in [0,2\pi)$. However, the case $\lambda=0$ has been ruled out since the kernel of $I-U^\ast$ is trivial. 

We conclude that, unless $\lambda=1$, $X_0$ is the zero function in $L^2([0,2\pi))$ and, by Lemma~\ref{lemma3jsp}, the same is true of $X_n$ for any $n\in\mathbb{Z}$. Consequently, $X=0$ and the theorem is proved.
\end{proof}

 \section{The infinite continuous  Ces\`aro operator on Lebesgue spaces}
 \label{sec:cinftyp}
  Let \(1 <p,q<\infty\) be conjugate indices, that is, 
 \[
 \frac{1}{p}+\frac{1}{q}=1.
 \]

 Our aim in this section  is to show that the set of extended eigenvalues for the infinite continuous Ces\`aro operator \(C_\infty\) on the complex Banach space \(L^p[0,\infty)\) reduces to the singleton \(\{1\}.\)

Before we present our  result we  define a sequence of functions $\{e_n\}_{n\in\mathbb{Z}}$ in $L^q(0,\infty)$. This construction is modeled after the one in \cite{BHS} for the case $q=2$. Let  $e_0=\chi_{(0,1)},$ and  let $$e_n=(1-2/q\,C_\infty^*)^n e_0, \quad \text{for } n\in\mathbb{N}.$$ Next, we define an operator $R$ on the linear span of $\{e_n\}_{n\in\mathbb{N}}$ by 
 $$
 Rf(x) = 
 -x^{-2/q} 
 f\left(\frac{1}{x}\right),
 $$
 and define  $e_{-n}=Re_{n-1}(x),$ for $n\in\mathbb{N}.$ 
 \begin{proposition}\label{propjsp101}
 Let the sequence of functions $\{e_n\}_{n\in\mathbb{Z}}$ be defined as above. Then $\{e_n\}_{n\in\mathbb{Z}}$ is a linearly independent set of functions in $L^q(0,\infty)$ and its closed linear span is $L^q(0,\infty)$. Further, the operator $1-2/q\,C_\infty^*$ shifts this sequence, i.e., $(1-2/q\,C_\infty^*)e_n=e_{n+1}$ for all $n\in\mathbb{Z}$. Finally, for any $\gamma\in (0, 1)$  there exists  $K=K(\gamma)$ such that $\|e_n\|\le K \gamma^{-n}$ if $n\ge 0$, and $\|e_n\|\le K \gamma^{n}$ if $n<0$. 
 \end{proposition}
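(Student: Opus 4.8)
The plan is to treat the four assertions in turn, with the linchpin for all of them being an invertibility/conjugation identity for the operator \(A:=1-\tfrac{2}{q}C_\infty^\ast\), combined with the symmetry supplied by the operator \(R\). As preliminaries I would record two facts: a Fubini computation gives the adjoint \((C_\infty^\ast g)(t)=\int_t^\infty g(x)\,x^{-1}\,dx\), and the change of variable \(u=1/x\) shows that \(R\) is an \emph{isometric involution} of \(L^q(0,\infty)\), that is, \(R^2=I\) and \(\|Rf\|_q=\|f\|_q\); moreover \(R\) carries \(L^q(0,1)\) isometrically onto \(L^q(1,\infty)\) and back, since \(Rf\) is supported exactly where \(f(1/\,\cdot\,)\) is.

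For the shift property the case \(n\ge 0\) is the definition, and the heart of the matter is to prove that \(A\) is invertible with \(A^{-1}=RAR\). Setting \(B:=RC_\infty^\ast R\), the same substitution gives \((Bf)(s)=s^{-2/q}\int_0^s u^{2/q-1}f(u)\,du\). Interchanging the order of integration on smooth compactly supported \(f\) and extending by density (all operators are bounded, \(C_\infty^\ast\) by Hardy's inequality), I would verify the commuting relations \(\tfrac{2}{q}C_\infty^\ast B=\tfrac{2}{q}BC_\infty^\ast=C_\infty^\ast+B\). These give \(A(RAR)=(RAR)A=I\) at once, so \(A^{-1}=RAR\) and hence \(RA=A^{-1}R\). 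A one-line computation, using \(Re_0=-x^{-2/q}\chi_{(1,\infty)}\) to get \(Ae_{-1}=e_0\), establishes the base case \(e_{-1}=A^{-1}e_0\). Feeding \(RA=A^{-1}R\) into \(e_{-n}=Re_{n-1}=RA^{n-1}e_0\) then yields \(e_{-n}=A^{-n}e_0\), so that \(e_n=A^ne_0\) for every \(n\in\mathbb Z\) and \(Ae_n=e_{n+1}\).

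For independence and total span I would use the action of \(C_\infty^\ast\) on functions supported in \((0,1)\) to see by induction that, for \(n\ge 0\), \(e_n=\chi_{(0,1)}\,L_n\!\big(\tfrac{2}{q}\log\tfrac1t\big)\) is a polynomial of exact degree \(n\) in \(\log(1/t)\) (the \(n\)th Laguerre polynomial), supported in \((0,1)\), while \(e_{-n}=Re_{n-1}\) for \(n\ge1\) is supported in \((1,\infty)\). Distinct degrees together with disjoint supports give linear independence. For the span, \(\mathrm{span}\{e_n:n\ge 0\}=\chi_{(0,1)}\cdot\mathbb C[\log(1/t)]\); the substitution \(t=e^{-s}\) identifies \(L^q(0,1)\) isometrically with \(L^q\big((0,\infty),e^{-s}\,ds\big)\) and reduces density to the classical fact that polynomials are dense in \(L^q\) against a measure with finite exponential moments. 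Applying the isometry \(R\) transports this to density in \(L^q(1,\infty)\), and the two halves combine to all of \(L^q(0,\infty)\).

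The growth estimate is where I expect the main obstacle. By the symmetry, \(\|e_{-n}\|=\|A^{-n}e_0\|=\|RA^nRe_0\|\le\|A^n\|\,\|e_0\|\), the same bound as \(\|e_n\|\le\|A^n\|\,\|e_0\|\); hence both inequalities follow once \(r(A)\le 1\), because then for any \(\gamma\in(0,1)\) we have \(1/\gamma>r(A)\) and \(\|A^n\|\le K\gamma^{-n}\). The difficulty is that the crude bound \(\|A\|\le 1+\tfrac2q\|C_\infty^\ast\|\le 3\) only yields exponential growth, which is too weak. I would instead conjugate by \(x=e^u\) (with the isometric weight \(e^{u/q}\)), turning \(C_\infty^\ast\) into a convolution and \(A\) into a Fourier multiplier whose symbol is \(\hat a(\xi)=-\frac{1+iq\xi}{1-iq\xi}\), which is \emph{unimodular}, \(|\hat a(\xi)|\equiv 1\). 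Applying Mikhlin's multiplier theorem to \(\hat a^{\,n}\), for which \(\sup|\hat a^{\,n}|=1\) and \(\sup_\xi|\xi(\hat a^{\,n})'|=n\sup_\xi|\xi\hat a'(\xi)|\le n\), gives only linear growth of the multiplier constant, so \(\|A^n\|_{L^q}\le C_q\,n\) and therefore \(r(A)=\lim\|A^n\|^{1/n}\le 1\). This spectral-radius estimate via unimodularity is the crux; note that when \(q=2\) the symbol being unimodular makes \(A\) unitary, recovering the Brown--Halmos--Shields picture in which \(\{e_n\}\) is orthonormal and \(\|e_n\|\equiv 1\).
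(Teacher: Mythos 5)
Your proposal is correct, and while the shift relation is handled in essentially the same way as in the paper, the other two components take genuinely different routes, so a comparison is worthwhile. For the shift property, your operator identity \(A^{-1}=RAR\) (obtained from \(\tfrac{2}{q}C_\infty^\ast B=\tfrac{2}{q}BC_\infty^\ast=C_\infty^\ast+B\) with \(B=RC_\infty^\ast R\)) is just the global form of the paper's identity \(\bigl(1-\tfrac{2}{q}C_\infty^\ast\bigr)R\bigl(1-\tfrac{2}{q}C_\infty^\ast\bigr)f=Rf\) for \(f\) supported in \((0,1)\); both rest on the same Fubini computation and the same base case \(Ae_{-1}=e_0\), but your packaging gives \(e_n=A^ne_0\) for all \(n\in\mathbb Z\) in one stroke. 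For independence and totality you diverge: you identify \(e_n\) on \((0,1)\) as a polynomial of exact degree \(n\) in \(\log(1/t)\) (a Laguerre polynomial), get independence from degrees plus disjoint supports, and get totality from the classical density of polynomials in \(L^q\) of a measure with exponential moments, transported to \((1,\infty)\) by the isometry \(R\); the paper instead quotes Brown--Halmos--Shields for the orthonormal system \(g_n=(I-C_\infty^\ast)^ne_0\) in \(L^2(0,1)\) and, for uniqueness of coefficients, tests against the eigenfunctions \(f_\alpha(x)=x^{(1-\alpha)/\alpha}\) and uses analyticity in \(\alpha\). Be aware that the paper's argument is deliberately stronger: it shows that a \emph{convergent series} \(\sum_{k\in\mathbb Z}c_ke_k=0\) forces all \(c_k=0\), which is what is needed later to define \(W_\theta\) on \(\mathcal F\); your degree argument only covers finite linear combinations, which is all the proposition literally asserts but not all that is used downstream. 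For the norm estimate the routes genuinely differ: the paper invokes, without proof or citation, the fact that \(\sigma\bigl(1-\tfrac{2}{q}C_\infty^\ast\bigr)\) is the unit circle and then uses power-boundedness of operators of spectral radius less than one, whereas you conjugate to the unimodular Fourier multiplier \(-\frac{1+iq\xi}{1-iq\xi}\) and apply Mikhlin to its \(n\)-th power, getting \(\|A^n\|\le C_q(1+n)\) and hence \(r(A)\le 1\). Your route is more self-contained (it avoids the unreferenced spectral fact about \(C_\infty\) on \(L^p\)) and yields a sharper, linear bound on \(\|A^n\|\); the paper's is shorter if one grants the spectral claim, and identifies the spectrum exactly rather than just bounding the spectral radius.
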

 \begin{proof}
 We start with the observation that the Cesaro operator $C_\infty$ is a bounded operator on $L^p(0,\infty)$, so its adjoint $C_\infty^*$ is bounded on $L^q(0,\infty)$. Therefore, $e_n\in L^q(0,\infty)$ for $n\ge 0$. Furthermore, it is straightforward to verify that $\|Re_n\|_q =  \|e_n\|_q$, so $e_n \in L^q(0,\infty)$ for $n< 0$ as well.
 
 Next we will show that $\{e_n\}_{n\in\mathbb{Z}}$ is a total set in $L^q(0,\infty)$. First we notice that for $n\ge 0$, each function $e_n$ vanishes outside $[0,1]$, and for $n<0$ outside of $(1,+\infty)$. In both cases it suffices to demonstrate that if a bounded linear functional vanishes on all $\{e_n\}$ then it must be the zero functional. Further, each functional on $L^q(0,1)$ can be represented by a function $g\in L^p(0,1)$. So, suppose that $g$ is such a function and that $\int_0^1 e_n \overline{g}=0$ for all $n\ge 0$.
  Let $g_n=(I-C_\infty^*)^n e_0$, for $n\ge 0$. It was proved in \cite{BHS} that $\{g_n\}$ is an orthonormal system in $L^2(0,1)$. Further,
  $$
  g_n=\left[1-\frac{q}2+\frac{q}2\left(1-\frac2{q}C_\infty^*\right)\right]^n e_0 = \sum_{i=0}^n \binom ni \left(1-\frac{q}2\right)^{n-i}\left(\frac{q}2\right)^i e_i,
  $$
  so $\int_0^1 g_n \overline{g}=0$ for all $n\ge 0$. Thus, any bounded linear functional that vanishes on $\{e_n\}$ must vanish on $\{g_n\}$, hence on $L^2(0,1)$, and it must be zero. When $n<0$, we will assume that $g\in L^p(1,\infty)$ and that $\int_1^\infty e_{-n} \overline{g}=0$ for all $n\ge 1$. However, using the substitution $t=1/x$,
  \begin{align*}
  \int_1^\infty e_{-n}(x) \overline{g}(x)\,dx  & = 
  -\int_1^\infty x^{-2/q} e_{n-1}(1/x) \overline{g}(x)\,dx \\
  & =  -\int_0^1 t^{-2/p} e_{n-1}(t) \overline{g}(1/t)\,dt.
 \end{align*}
  So, the previous case implies that $ t^{-2/p}\overline{g}(1/t)$ is the zero function, whence $g=0$.

 Next we consider the set $\mathcal{F}$ defined as follows. A function $f\in L^q(0,\infty)$ belongs to $\mathcal{F}$ if there exists a sequence of complex numbers $\{c_n\}_{n\in\mathbb{Z}}$ such that $f=\sum_{n\in\mathbb{Z}} c_ne_n$. Since  $\{e_n\}_{n\in\mathbb{Z}}$ is a total set, $\mathcal{F}$ is dense in $L^q(0,\infty)$. Now we will show that if $f\in \mathcal{F}$, there is exactly one sequence $\{c_n\}_{n\in\mathbb{Z}}$. In order to do that it suffices to demonstrate that, if $\sum_{k\in\mathbb{Z}} c_ke_k=0$ then $c_k=0$ for all $k\in\mathbb{Z}$. Notice that
 \begin{align*}
  \left \|\sum_{k\in\mathbb{Z}} c_ke_k \right \|^q & =
 \int_0^\infty \left|\sum_{k\in\mathbb{Z}} c_ke_k\right|^q = \int_0^1 \left|\sum_{k=0}^\infty c_ke_k\right|^q +
 \int_1^\infty \left|\sum_{k=-\infty}^{-1} c_ke_k\right|^q \\
 & =   \left \|\sum_{k=-\infty}^{-1} c_ke_k \right \|^q + \left \|\sum_{k=0}^\infty c_ke_k \right \|^q,
\end{align*}
 so we can consider separately $n\ge 0$ and $n<0$. We start with $n\ge 0$. 
 Let $\alpha \in D(q/2,q/2)$ and 
 \(f_\alpha (x)=x^{(1-\alpha)/\alpha}\).
 Since $\|f\|_q \ge |\int_0^1 f \overline{f_\alpha}|/\|f_\alpha\|_p$ for any $f\in L^q(0,1)$ and $f_\alpha\in L^p(0,1)$ it follows that
 $$
\int_0^1 \left(\sum_{k=0}^\infty c_ke_k\right) \overline{f_\alpha}= 0.
 $$
 Notice that, if $k\ge 0$
 $$
 \int_0^1 e_k \overline{f_\alpha} = \int_0^1 \left(1-\frac{2}{q}C_\infty^*\right)^ke_0 \overline{f_\alpha} = \int_0^1 e_0 \left(1-\frac{2}{q}C_\infty^k\right) \overline{f_\alpha}.
 $$
 Further, $(1-2/q\,C_\infty)^k \overline{f_\alpha} = (1-2/q\,\overline{\alpha})^k \overline{f_\alpha}+v_k$, where $v_k$ is a function that vanishes on $(0,1)$. Thus,
 $$
 \int_0^1 \sum_{k=0}^\infty c_k\left(1-\frac{2}{q}\alpha\right)^k   \overline{f_\alpha}= 0 .
 $$
 It is easy to see that $\int_0^1 \overline{f_\alpha}\ne 0$, so we obtain that 
 $$
\sum_{k=0}^\infty c_k\left(1-\frac{2}{q}\alpha\right)^k = 0.
 $$
 This implies that the analytic function $\sum_{k=0}^\infty  (1-2z/q)^k$ vanishes in the disc $D(q/2,q/2)$, whence $ c_k=0$ for all $k$. This settles the case $n\ge 0$ and we turn our attention to $n<0$. We will use the identity
 \begin{equation}\label{jspeq202}
 e_{-n}(x) =-x^{-2/q} e_{n-1}(1/x)
 \end{equation}
 which holds for all $n\in\mathbb{N}$, and follows directly from the definition of $e_{-n}$.  Suppose that there exist complex numbers $\{c_k\}$ such that 
 $$
\left  \|\sum_{k=1}^\infty c_{k}e_{-k} \right \|= 0.
 $$
 Using (\ref{jspeq202}), it follows that
 $$
 \int_1^\infty \left|\sum_{k=1}^\infty c_{k} x^{-2/q} e_{k-1}(1/x)\right|^q\,dx= 0.
 $$
 With the substitution $t=1/x$ we obtain
 $$
 \int_0^1 \left|\sum_{k=1}^\infty c_{k} e_{k-1}(t)\right|^q\,dt= 0,
 $$
 so the result follows from the previous case.

 Our next step is to establish the desired estimate on the norm of $e_n$. To that end, we notice that the spectrum of $1-(2/q)C^*$ is the unit circle. Thus, if $\gamma\in(0,1)$, the spectral radius of $\gamma(1-(2/q)C^*)$ is less than one. It follows that this operator is similar to a strict contraction, hence power bounded. That is, there exists $K>0$ such that for $n\ge 0$, $\|(\gamma-(2\gamma/q)C^*)^n\|\le K$. Therefore,
 $$
 \|e_n\| =  \left \| \left(1-\frac2{q}C_\infty^*\right)^n e_0 \right \| \le 
 \left(\frac{1}{\gamma}\right)^n K \|e_0\|= K \left(\frac{1}{\gamma}\right)^n.
 $$
 As we had already noticed, $\|e_{-n}\|=\|e_{n-1}\|$ so the analogous estimate for $e_n$ indexed by negative integers follows.
 
 Finally, we will prove that $(1-2/{q} \,C_\infty^*)e_n=e_{n+1}$ for all $n\in\mathbb{Z}$. For $n\ge 0$ this is just the definition of $e_n$, so we focus on the case $n<0$.
 We will show that, for $n\ge 0$, 
 \begin{equation}
 \label{jspeq102}
 \left(1-\frac2{q}C_\infty^*\right) R \left(1-\frac2{q}C_\infty^*\right)e_n = Re_n.
 \end{equation}
 Once this is established the result will easily follow. Indeed, if $n>1$ then  
 \begin{align*}
 \left(1-\frac2{q}C_\infty^*\right)e_{-n}  & =  \left(1-\frac2{q}C_\infty^*\right) Re_{n-1} \\
 & =R \left(1-\frac2{q}C_\infty^*\right)^{-1}e_{n-1}\\
 & =  Re_{n-2} = e_{-n+1}.
 \end{align*}
 When $n=1$ 
 \begin{align*}
 \left(1-\frac2{q}C_\infty^*\right)e_{-1}(x) &=  \left(1-\frac2{q}C_\infty^*\right) Re_{0}(x) \\
 & =  -\left(1-\frac2{q}C_\infty^*\right) x^{-2/q} e_0\left(\frac{1}{x}\right) \\
 &=  -x^{-2/q} e_0\left(\frac{1}{x}\right) + \left(\frac{2}{q}\right)\int_x^\infty \frac{t^{-2/q}e_0(1/t)}{t}\,dt.
 \end{align*}
 Since $e_0=\chi_{(0,1)}$, if $0<x<1$ then $e_0(1/x)=0$ and the domain of integration is reduced to $(1,+\infty)$. Thus, we obtain
 $$
 \left(\frac{2}{q}\right)\int_1^\infty \frac{t^{-2/q}}{t}\,dt = 1.
 $$
 If $x\ge 1$ then $e_0(1/x)=1$ so we obtain
 $$
 -x^{-2/q}+ \left(\frac{2}{q}\right)\int_x^\infty \frac{t^{-2/q}}{t}\,dt=0.
 $$
 We conclude that $(1-2/{q} C_\infty^*)e_{-1}=e_0$. 
 
 Thus it remains to establish the identity (\ref{jspeq102}). 
 Let $f$ be any function in $L^q(0,\infty)$ that vanishes outside the interval $(0,1)$. Then
 \begin{align}
 \begin{gathered}\label{jspeq101}
 \left(1-\frac2{q}C_\infty^*\right) R \left(1-\frac2{q}C_\infty^*\right) f =\\= 
 - x^{-2/q} f\left(\frac{1}{x}\right) +
 \frac{2}{q} x^{-2/q} \int_{1/x}^\infty \frac{f(t)}{t}\,dt \\+
 \frac{2}{q} \int_x^\infty \frac{t^{-2/q}f(1/t)}{t}\,dt - \frac{4}{q^2}
 \int_x^\infty \frac{t^{-2/q}}{t} \,dt \int_{1/t}^\infty \frac{f(s)}{s}\,ds.
 \end{gathered}
 \end{align}
 If $0<x<1$ the first two terms are equal to 0, and in the remaining two, the domains of integration are changed. We obtain
 $$
 \frac{2}{q} \int_1^\infty \frac{t^{-2/q}f(1/t)}{t}\,dt - \frac{4}{q^2}
 \int_1^\infty \frac{t^{-2/q}}{t} \,dt \int_{1/t}^1 \frac{f(s)}{s}\,ds.
 $$
 Now the substitution $u=1/t$ followed by the change in the order of integration in the second term yields
 \begin{align*}
 & \frac{2}{q} \int_0^1 \frac{u^{2/q}f(u)}{u}\,du - \frac{4}{q^2}
 \int_0^1 \frac{u^{2/q}}{u} \,du \int_{u}^1 \frac{f(s)}{s}\,ds \\
 & =  \frac{2}{q} \int_0^1 \frac{u^{2/q}f(u)}{u}\,du - \frac{4}{q^2}
 \int_0^1 \frac{f(s)}{s}\,ds \int_0^s \frac{u^{2/q}}{u}\,du \\
 &= \frac{2}{q} \int_0^1 \frac{u^{2/q}f(u)}{u}\,du - \frac{4}{q^2}
 \int_0^1 \frac{f(s)}{s}\,\frac{q}{2}\,s^{2/q}\,ds = 0.
 \end{align*}
 If $x\ge 1$, we will obtain that all the terms in (\ref{jspeq101}) except for the first cancel. Once again, we use the substitution $u=1/t$ in the last two terms and obtain
 \begin{align}
 & - x^{-2/q} f\left(\frac{1}{x}\right) +
 \frac{2}{q} x^{-2/q} \int_{1/x}^\infty \frac{f(t)}{t}\,dt \\
 & + \frac{2}{q} \int_0^{1/x} \frac{u^{2/q}f(u)}{u}\,du - \frac{4}{q^2}
 \int_0^{1/x} \frac{u^{2/q}}{u} \,du \int_{u}^\infty \frac{f(s)}{s}\,ds.
 \end{align}
 Further, after interchanging the order of integration in the iterated integral, it becomes
 \begin{align*}
 & \int_0^{1/x} \frac{f(s)}{s}\,ds
 \int_0^s 
 \frac{u^{2/q}}{u} \,du +
 \int_{1/x}^\infty \frac{f(s)}{s}\,ds
 \int_0^{1/x} \frac{u^{2/q}}{u} \,du 
 \\
 & =
 \int_0^{1/x} \frac{f(s)}{s}\,\frac{q}{2}\,s^{2/q}\,ds
 +
 \int_{1/x}^\infty \frac{f(s)}{s}\,\frac{q}{2}\,x^{-2/q}\,ds,
 \end{align*}
 so it is easy to see that we have the announced cancelation. Combining these two cases we conclude that
 $$
 \left(1-\frac2{q}C_\infty^*\right) R \left(1-\frac2{q}C_\infty^*\right)f = Rf.
 $$
 whenever $f$ vanishes outside $(0,1)$.
 In particular, if $f=e_n$ for $n\ge 0$, we obtain (\ref{jspeq102}).
 \end{proof}

 \begin{proposition}\label{prop7.3}
 Let $\{e_n\}_{n\in\mathbb{Z}}$ and $\mathcal{F}$ be as in Proposition~\ref{propjsp101} and let $0<\theta<1.$ Let 
 $W_\theta:\mathcal{F}\to L^q(0,2\pi)$ be a linear transformation defined by 
  $$
  W_\theta e_n = \frac{\theta^{|n|}}{(1-\theta)^{\max\{1/p,1/q\}}}e^{int}, \mbox{ for }n\in\mathbb{Z},
  $$ 
 and extended linearly. Then there is a constant $K=K(p,q)$ such that, for any $\theta\in (0,1)$ and any $f\in \mathcal{F}$, $\|W_\theta f\|\le K\|f\|$.
 Consequently, $W_\theta$ extends to a bounded linear operator $W_\theta:L^q(0,\infty)\to L^q(0,2\pi)$.
 \end{proposition}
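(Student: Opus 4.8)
The plan is to split the problem into an analytic and a co-analytic half and to control each half by testing $f$ against the eigenfunctions $f_\alpha(x)=x^{(1-\alpha)/\alpha}$ of $C_\infty$ that already occur in the proof of Proposition~\ref{propjsp101}. Since $e_n$ is supported in $(0,1)$ for $n\ge 0$ and in $(1,\infty)$ for $n<0$, every $f=\sum_n c_n e_n\in\mathcal F$ splits as $f=f_++f_-$ with $\|f\|^q=\|f_+\|^q+\|f_-\|^q$, and $W_\theta f=W_\theta f_++W_\theta f_-$ separates into non-negative and negative frequencies. By the triangle inequality it suffices to bound each half, and it is enough to do so for finite linear combinations, which are dense in $L^q(0,\infty)$; the uniform bound then extends $W_\theta$ to all of $L^q(0,\infty)$.

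Write $\beta=\max\{1/p,1/q\}$ for the exponent in the normalising factor and set $\hat c(w)=\sum_{n\ge 0}c_n w^n$, so that for a finite combination $(W_\theta f_+)(t)=(1-\theta)^{\beta}\hat c(\theta e^{it})$. The key identity, read off from the proof of Proposition~\ref{propjsp101}, is $\int_0^1 e_k\,\overline{f_\alpha}=(1-\tfrac2q\overline\alpha)^k\,\overline\alpha$; summing over $k$ gives
\[
\hat c(w)=\frac{2}{q(1-w)}\int_0^1 f_+\,\overline{f_\alpha},\qquad w=1-\tfrac2q\overline\alpha,
\]
where $w$ runs over $\mathbb D$ as $\alpha$ runs over $D(q/2,q/2)$. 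Taking $w=\theta e^{it}$ and applying H\"older's inequality to $\int_0^1 f_+\overline{f_\alpha}$ yields the pointwise estimate $|(W_\theta f_+)(t)|\le (1-\theta)^{\beta}\,\dfrac{2}{q\,|1-\theta e^{it}|}\,\|f_+\|_q\,\|f_{\alpha(t)}\|_p$.

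The decisive computation is that the $L^p(0,1)$ norm of the eigenfunction is governed by the Poisson kernel: a direct integration gives $\|f_{\alpha(t)}\|_{p}^{p}=q/(p\,P_\theta(t))$, where $P_\theta(t)=(1-\theta^2)/(1-2\theta\cos t+\theta^2)$. Using $|1-\theta e^{it}|^2=(1-\theta^2)/P_\theta(t)$, I would raise the pointwise bound to the $q$-th power and integrate to get
\[
\|W_\theta f_+\|_q^q\le C_{p,q}\,(1-\theta)^{\beta q}\,(1-\theta^2)^{-q/2}\,\|f_+\|_q^q\int_0^{2\pi}P_\theta(t)^{\,1-q/2}\,dt.
\]
The co-analytic half is treated identically after the substitution $t\mapsto 1/x$ built into the definition of $e_{-n}$, which converts the pairing on $(1,\infty)$ into the same pairing on $(0,1)$ and produces an estimate of exactly the same shape.

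The main obstacle is the Poisson-kernel estimate $\int_0^{2\pi}P_\theta(t)^{\rho}\,dt\le C_{p,q}(1-\theta)^{\rho}$ with $\rho=1-q/2<1/2$, together with the bookkeeping of the powers of $1-\theta$. Since $P_\theta$ concentrates near $t=0$, where $P_\theta(t)\approx 2(1-\theta)/((1-\theta)^2+t^2)$, the rescaling $t=(1-\theta)u$ shows the integral is comparable to $(1-\theta)^{\rho}$. Feeding this in, the total power of $1-\theta$ becomes $\beta q-q+1=1-q(1-\beta)$, which is non-negative precisely because $\beta\ge 1/p$ (equivalently $1-\beta\le 1/q$); the choice $\beta=\max\{1/p,1/q\}$ guarantees this. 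Hence $(1-\theta)^{1-q(1-\beta)}\le 1$ for $\theta\in(0,1)$, so $\|W_\theta f_+\|_q\le C\|f_+\|_q$ uniformly in $\theta$, and likewise for $f_-$. Combining the two halves gives $\|W_\theta f\|\le K(p,q)\|f\|$, and density of $\mathcal F$ yields the bounded extension $W_\theta\colon L^q(0,\infty)\to L^q(0,2\pi)$.
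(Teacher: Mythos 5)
Your proposal is correct in substance, but it takes a genuinely different route from the paper's proof, so a comparison is worthwhile. Both arguments rest on the same two exact computations: the eigenfunction pairing $\int_0^1 e_k\,\overline{f_\alpha}=\bigl(1-\tfrac{2}{q}\overline{\alpha}\bigr)^k\overline{\alpha}$ and the Poisson-kernel identity $1+\mathrm{Re}\,\frac{p(1-\alpha)}{\alpha}=\frac{p}{q}P_\theta(t)$, equivalently $\|f_{\alpha(t)}\|_p^p=q/(p\,P_\theta(t))$. But the paper never integrates in $t$: it bounds the $L^q(0,2\pi)$ norm of the trigonometric polynomial by $(2\pi)^{1/q}$ times its maximum modulus, attained at a single point $t_0$, and recovers that single value by pairing $\sum_k c_ke_k$ against one explicit dual function $g=g_1+g_2\in L^p(0,\infty)$ attached to $t_0$ (an eigenfunction piece on $(0,1)$ plus a piece on $(1,\infty)$ that $R^\ast$ turns back into an eigenfunction), computing the pairing and $\|g\|_p$ exactly. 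You instead split $f$ by support, bound $\hat c(\theta e^{it})$ pointwise for \emph{every} $t$ by H\"older against $f_{\alpha(t)}$, and then integrate in $t$; this needs the extra (standard) moment bound $\int_0^{2\pi}P_\theta^{\rho}\,dt\lesssim_\rho(1-\theta)^{\rho}$ for $\rho=1-q/2<1/2$, which the paper avoids. (Your rescaling proof of that bound is fine for $0<\rho<1/2$; when $q\ge 2$, so $\rho\le 0$, it is even easier, since $P_\theta\ge(1-\theta)/2$ pointwise.) The trade-off: the paper's single-point evaluation stays purely algebraic, but evaluating at the worst point costs the factor $|\alpha(t_0)|^{1-2/p}$, which for $p>2$ can be of order $(1-\theta)^{1-2/p}$ --- this is exactly why the paper's exponent is $\max\{1/p,1/q\}$. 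Your integrated version avoids the worst-point loss and shows that the exponent $1/p$ suffices for every $p$, a slightly sharper form of (\ref{jspeq403}); your reduction of the co-analytic half to the analytic one via the isometry built into the definition of $e_{-n}$ is also sound.

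One caveat, which is not a gap in your argument: you wrote $(W_\theta f_+)(t)=(1-\theta)^{\beta}\hat c(\theta e^{it})$ with $\beta=\max\{1/p,1/q\}$ multiplying, whereas the displayed definition of $W_\theta$ has $(1-\theta)^{\beta}$ dividing. Your reading is in fact the only one under which the proposition can hold: with the factor dividing, the single test function $f=e_0$ gives $\|W_\theta e_0\|_{L^q(0,2\pi)}=(2\pi)^{1/q}(1-\theta)^{-\beta}\to\infty$ as $\theta\uparrow 1$ while $\|e_0\|_q=1$, so no $\theta$-independent constant $K$ can exist. What the paper's proof actually establishes is the inequality (\ref{jspeq403}), namely $\bigl\|\sum_k c_k\theta^{|k|}e^{ikt}\bigr\|\le K(1-\theta)^{-\beta}\bigl\|\sum_k c_ke_k\bigr\|$, and that is precisely what your argument proves (indeed with $\beta$ improved to $1/p$); so you have silently corrected a sign typo in the statement rather than introduced an error.
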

 \begin{proof}
 We will show that there exists
 such a constant $K$ that does not depend on $\theta$ and such that, for any $f=\sum_{k=-\infty}^\infty c_k e_k\in L^q(0,\infty)$ and any $n\in\mathbb{N}$,
 \begin{equation}\label{jspeq403}
 \left \| \sum\limits_{k=-n}^n c_k {\theta^{|k|}}e^{ikt} \right \| \le \frac{K}{(1-\theta)^{\max\{1/p,1/q\}}} \left \|\sum\limits_{k=-n}^n c_k e_k \right \|.
 \end{equation}
 We start with the fact that $\sum_{k=-n}^n c_k {\theta^{|k|}} e^{ikt}$ is continuous, so its modulus attains its maximum at some ${t_0}\in [0,2\pi]$. Consequently,
 \begin{align*}
 \label{jspeq203}
\left  \| \sum\limits_{k=-n}^n c_k {\theta^{|k|}}e^{ikt} \right \|^q &=  \int_0^{2\pi}  \left|\sum\limits_{k=-n}^n c_k {\theta^{|k|}} e^{ikt}\right|^q\,dt \\ 
 & \le  {2\pi}   \left|\sum\limits_{k=-n}^n c_k {\theta^{|k|}}  e^{ikt_0}\right|^q \\&= 2\pi \left|\sum\limits_{k=0}^n c_k \left(1-\frac{2}{q}\overline{\alpha}\right)^k
  + \sum\limits_{k=-n}^{-1} c_k \left(1-\frac{2}{q}\overline{\beta}\right)^{-k}
  \right|^q,
 \end{align*}
 where $\alpha = q/2(1- \theta e^{-it_0})$ and $\beta = q/2(1- \theta e^{it_0})$.
 Let 
 $$
 g_1(x) =\left(1-\frac{2}{q}\beta\right)^{-1}{\beta}\chi_{(0,1)}(x)\,x^{(1-\alpha)/\alpha},
 $$ 
 $$
 g_2(x)=-{\alpha}\chi_{(1,\infty)}(x)\,x^{-2/p-(1-\beta)/\beta} ,
 $$
 and $g=g_1+g_2$. 
 Notice that $g$
 belongs to $L^p(0,\infty)$.
 Indeed, it suffices to establish that 
 $$
 \mbox{Re}\left(\frac{p(1-\alpha)}{\alpha}\right)>-1\mbox{ and } 
 \mbox{Re}\left(-2-\frac{p(1-\beta)}{\beta}\right)<-1.
 $$
 These inequalities can be reduced to $\mbox{Re}(1/\alpha)>1/q$ and $\mbox{Re}(1/\beta)>1/q$,
 which  in turn is equivalent to  $\alpha,\beta\in D(q/2,q/2)$. Since these are obvious,
 $g\in L^p(0,\infty)$. Moreover, 
 \begin{align*}
 \|g\|^p &= \int_0^\infty |g_1+g_2|^p \\
 & =  \int_0^1 \left|1-\frac{2}{q}\beta\right|^{-p}|\beta x^{(1-\alpha)/\alpha}|^p + 
 \int_1^\infty |\alpha x^{-2/p-(1-\beta)/\beta}|^p \\&= \frac{1}{\theta^p}|\beta|^p \,\frac{1}{1+\mbox{Re}\,p\frac{1-\alpha}{\alpha}} +|\alpha|^p \,\frac{1}{1+\mbox{Re}\,p\frac{1-\beta}{\beta}}.
 \end{align*}
 Further,
 \begin{align*}
 1+\mbox{Re} \frac{p(1-\alpha)}{\alpha} &= 1-p+p\, \mbox{Re} \frac{1}{\alpha} \\
 & =  1-p+\frac{p}{|\alpha|^2}\mbox{Re}(\overline{\alpha})\\
 & = 1-p+\frac{2p}{q|1- \theta e^{-it_0}|^2} \mbox{Re}(1-\theta e^{it_0}) \\
 &=  \frac{p}{q} \left(-1+ 2\,\frac{1-\theta\cos t_0}{1-2\theta\cos t_0+\theta^2}\right) \\
 & =  \frac{p}{q} \frac{1-\theta^2}{1-2\theta\cos t_0+\theta^2},
 \end{align*}
 and the same equality holds with $\beta$ in place of $\alpha$.  Using the relation $\alpha=\overline{\beta}$, we obtain that
  \begin{eqnarray}
  \label{jspeq205}
  \|g\| &=  \left(\frac{1}{\theta^p}\frac{|\beta|^p}{1+\mbox{Re} \frac{p(1-\alpha)}{\alpha}}\right. +\left.  \frac{|\alpha|^p}{1+\mbox{Re} \frac{p(1-\beta)}{\beta}}\right)^{1/p} \\
 & = \left(\frac{1}{\theta^p}+1\right)^{1/p} {|\alpha|} \left(\frac{q}{p} \frac{1-2\theta\cos t_0+\theta^2}{1-\theta^2}\right)^{1/p}\\
 &= \left(\frac{(\theta^p+1) q}{(\theta+1)p}\right)^{1/p}\frac{|\alpha|^{1+2/p}}{\theta (1-\theta)^{1/p}}.
 \end{eqnarray}
 Next,
 \begin{small}
 \begin{align*}
& \left  \|\sum\limits_{k=-n}^n c_k e_k \right \| \\
&  \ge   \left|\int_0^\infty \sum\limits_{k=-n}^n c_k e_k\,\overline{g}\right|\,\frac1{\|g\|} \\
 &= \left|\int_0^\infty \sum\limits_{k=-n}^n c_k e_k\,\overline{g_1+g_2}\right|\,\frac1{\|g\|}\\
 &= \left|\int_0^\infty \sum\limits_{k=0}^n c_k e_k\,\overline{g_1}+\int_0^\infty \sum\limits_{k=-n}^{-1} c_ke_k \,\overline{g_2}\right|\,\frac1{\|g\|}\\
 &= \left|\int_0^\infty \sum\limits_{k=0}^n c_k \left(1-\frac2{q}C_\infty^*\right)^ke_0\,\overline{g_1}+\int_0^\infty \sum\limits_{k=-n}^{-1} c_k Re_{-k-1}\,\overline{g_2}\right|\,\frac1{\|g\|}\\
 &= \left|\int_0^\infty \sum\limits_{k=0}^n c_ke_0 \left(1-\frac2{q}C_\infty\right)^k\,\overline{g_1}+\int_0^\infty \sum\limits_{k=-n}^{-1} c_k R\left(1-\frac2{q}C_\infty^*\right)^{-k-1}e_0\,\overline{g_2}\right|\,\frac1{\|g\|}\\
 &= \left|\int_0^1 \sum\limits_{k=0}^n c_ke_0 \left(1-\frac{2}{q}\overline{\alpha}\right)^k\,\overline{g_1}+\int_0^\infty \sum\limits_{k=-n}^{-1} c_ke_0 \left(1-\frac2{q}C_\infty\right)^{-k-1}R^*\overline{g_2}\right|\,\frac1{\|g\|}.
 \end{align*}
 \end{small}
 It is not hard to see that the operator $R^*$ is given by the formula $R^*f(x) = -x^{-2/p}f(1/x)$, so 
 $$
 R^*\overline{g_2}(x) =\overline{\alpha} x^{-2/p} \chi_{(1,\infty)}(1/x)\,x^{2/p+\overline{(1-\beta)/\beta}} =
 \overline{\alpha}\chi_{(0,1)}(x)x^{\overline{(1-\beta)/\beta}}=\overline{\alpha} f_{\overline{\beta}}(x).
 $$
 Therefore,
 the second integral can be written as
 \begin{align*}
&  \overline{\alpha }\int_0^1 \sum\limits_{k=-n}^{-1} c_ke_0(x) \left(1-\frac2{q}C\right)^{-k-1}f_{\overline{\beta}}(x)\,dx\\
&=\overline{\alpha} \int_0^1 \sum\limits_{k=-n}^{-1} c_ke_0(x) \left(1-\frac2{q}\,\overline{\beta}\right)^{-k-1}f_{\overline{\beta}}(x)\,dx \\
&= \overline{\alpha}\overline{\beta} \sum\limits_{k=-n}^{-1} c_k \left(1-\frac2{q}\overline{\beta}\right)^{-k-1}\\
&= \overline{\alpha}\overline{\beta}\left(1-\frac2{q}\overline{\beta}\right)^{-1} \sum\limits_{k=-n}^{-1} c_k \left(1-\frac2{q}\overline{\beta}\right)^{-k}.
 \end{align*}
 Since the first integral equals 
 $$
 \overline{\alpha}\overline{\beta}\left(1-\frac2{q}\overline{\beta}\right)^{-1} \sum\limits_{k=0}^n c_k \left(1-\frac2{q}\overline{\alpha}\right)^k,
 $$
 we obtain that 
 \begin{small}
 \begin{align*}
\left \| \sum_{k=-n}^n c_ke_k \right \|&\ge \left|\overline{\alpha}\overline{\beta}\left(1-\frac2{q}\overline{\beta}\right)^{-1}\right|\,
 \left|\sum\limits_{k=0}^n c_k \left(1-\frac{2}{q}\overline{\alpha}\right)^k
  + \sum\limits_{k=-n}^{-1} c_k \left(1-\frac{2}{q}\overline{\beta}\right)^{-k}
  \right|\,\frac{1}{\|g\|} \\&=
  \frac{|\alpha|^2}{\theta}\,
  \frac{1}{(2\pi)^{1/q}} \|\sum_{k=-n}^n c_k \theta^{|k|} e^{ikt}\|
  \left(\frac{(\theta+1)p}{(\theta^p+1) q}\right)^{1/p}\frac{\theta (1-\theta)^{1/p}}{|\alpha|^{1+2/p}}\\&\ge
  \left(\frac{p}{q}\right)^{1/p} \frac{1}{(2\pi)^{1/q}} \,|\alpha|^{1-2/p}(1-\theta)^{1/p} \left \|\sum_{k=-n}^n c_k \theta^{|k|} e^{ikt} \right \|
 \end{align*}
 \end{small}
 If $1<p\le 2$ then $1-2/p\le 0$, so
 $$
 |\alpha|^{1-2/p} \ge \left(\frac{q}{2}\right)^{1-2/p}(1+\theta)^{1-2/p}>\left(\frac{q}{2}\right)
 ^{1-2/p} 2^{1-2/p}.
 $$
 If $p>2$ then $1-2/p> 0$, so
 $$
 |\alpha|^{1-2/p} \ge \left(\frac{q}{2}\right)^{1-2/p}(1-\theta)^{1-2/p}
 $$
 and it follows that, in this case,
 $$
 |\alpha|^{1-2/p} (1-\theta)^{1/p} \ge \left(\frac{q}{2}\right)^{1-2/p} (1-\theta)^{1-2/p+1/p} = 
 \left(\frac{q}{2}\right)^{1-2/p}
 (1-\theta)^{1/q}.
 $$
 Therefore,  there exists $K=K(p,q)$ such that
 (\ref{jspeq403}) holds.
 We conclude that $W$ is a bounded linear transformation and that $\|W\|\le K$.
 \end{proof}

 \begin{theorem}
 Let $C_\infty$ be the Cesaro operator on $L^p(0,\infty)$ for $1<p\le\infty$, and let $\lambda\ne 1$ be  a complex number. If $X$ is a bounded linear operator on $L^p(0,\infty)$ such that $C_\infty X=\lambda XC_\infty$, then $X=0$.
 \end{theorem}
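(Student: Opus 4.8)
The plan is to adapt the Hilbert-space argument of Theorem~\ref{thm4.3} to $L^p$, replacing the orthogonal expansion of the rows of $X$ by the transform $W_\theta$ of Proposition~\ref{prop7.3} and the norm estimates of Proposition~\ref{propjsp101}. First I would dispose of the trivial case: since $C_\infty$ is injective, $C_\infty X=0$ forces $X=0$, so we may assume $\lambda\neq 0$ (and $\lambda\neq 1$ by hypothesis). Taking adjoints in $C_\infty X=\lambda XC_\infty$ gives $X^\ast C_\infty^\ast=\overline\lambda\,C_\infty^\ast X^\ast$ on $L^q(0,\infty)$, and since $\{e_n\}_{n\in\mathbb Z}$ is total there (Proposition~\ref{propjsp101}) it suffices to prove that $y_n:=X^\ast e_n=0$ for every $n$. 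Using the shift identity $e_{n+1}=(1-\tfrac2q C_\infty^\ast)e_n$ together with the adjoint relation, a one-line computation in the spirit of Lemma~\ref{lemma3jsp} yields the recursion $y_{n+1}=My_n$, where $M:=1-\tfrac{2\overline\lambda}q C_\infty^\ast=\overline\lambda\,U+(1-\overline\lambda)I$ and $U:=1-\tfrac2q C_\infty^\ast$. Consequently $y_{m+k}=M^k y_m$ for all $m\in\mathbb Z$ and all $k\ge 0$.

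The key quantitative inputs are a two-sided subexponential bound and the symbol geometry. On the one hand, boundedness of $X$ and the estimate of Proposition~\ref{propjsp101} give, for every $\gamma\in(0,1)$, a constant $K(\gamma)$ with $\|y_n\|\le \|X\|\,K(\gamma)\,\gamma^{-|n|}$. On the other hand, since the spectrum of $U$ is the unit circle, $M$ should behave under $W_\theta$ like multiplication by the symbol $\varphi(t)=\overline\lambda e^{it}+1-\overline\lambda$, whose range is the circle through $1$ with centre $1-\overline\lambda$ and radius $|\lambda|$. Writing $A=\{t\colon|\varphi(t)|>1\}$ and $B=\{t\colon|\varphi(t)|<1\}$, the set $\{|\varphi(t)|=1\}$ is the intersection of this circle with $\mathbb T$; as both pass through $1$, it consists of at most two points whenever $\lambda\neq 0,1$, so $A\cup B$ has full measure. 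By Lemma~\ref{uni}, $A\neq\emptyset$ precisely when $|\lambda|+|1-\lambda|>1$, that is, exactly when $\lambda\notin(0,1]$.

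I would then run the growth/decay dichotomy to show $W_\theta y_0=0$. Applying $W_\theta$ to the relation $y_n=M^n y_0$ for $n\ge0$ and to $y_0=M^{|n|}y_{-|n|}$ for $n<0$, one compares $\|W_\theta y_n\|_{L^q(0,2\pi)}\le K\|y_n\|$ with the pointwise size of $\varphi^n$. On a subarc of $A$ where $|\varphi|\ge 1+\delta$, nonvanishing of $W_\theta y_0$ would force $\|W_\theta y_n\|\gtrsim(1+\delta)^n$, which for $\gamma$ close to $1$ contradicts the subexponential bound; hence $W_\theta y_0=0$ a.e. on $A$. Dually, on a subarc of $B$ where $|\varphi|\le 1-\delta$, the relation $W_\theta y_0\approx\varphi^{|n|}W_\theta y_{-|n|}$ together with $|\varphi|^{|n|}\to0$ and the subexponential bound on $\|y_{-|n|}\|$ gives $W_\theta y_0=0$ a.e. on $B$. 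Since $A\cup B$ has full measure, $W_\theta y_0=0$; varying $\theta$ to recover the Fourier coefficients (equivalently, injectivity of $W_\theta$ on the $e_n$-expansions) yields $y_0=0$, and the same argument applied to each $y_m$, together with the recursion, gives $y_n=0$ for all $n$, so $X^\ast=0$ and $X=0$.

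The hard part will be the phrase ``behaves under $W_\theta$ like multiplication by $\varphi$'': the identity $W_\theta M=\varphi\cdot W_\theta$ holds only up to a defect of order $(1-\theta)^{1-\max\{1/p,1/q\}}$ concentrated near the origin, so the clean $L^2$ spectral picture of Theorem~\ref{thm4.3} is not directly available. The delicate point is to control this defect across the $n$-fold iteration and to interchange the limits $n\to\infty$ and $\theta\to1$; I expect that this is exactly where the explicit dual functions $g=g_1+g_2\in L^p(0,\infty)$ constructed in the proof of Proposition~\ref{prop7.3}—which realize the pairing $\langle\,\cdot\,,g\rangle$ as evaluation of the symbol at a prescribed point of $\mathbb T$—must be used in place of a naive intertwining, just as in the derivation of inequality~(\ref{jspeq403}).
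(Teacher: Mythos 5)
Your outline reproduces the paper's skeleton faithfully: the adjoint relation, the recursion $y_{n+1}=(1-\tfrac{2\overline{\lambda}}{q}C_\infty^\ast)y_n$ (the paper's equation (\ref{jspeq501})), the subexponential bounds $\|y_n\|\le \|X\|K(\gamma)\gamma^{-|n|}$ from Proposition~\ref{propjsp101}, the transfer to $L^q(0,2\pi)$ by $W_\theta$, the growth/decay dichotomy of Theorem~\ref{thm4.3}, and the recovery of coefficients by varying $\theta$. But the step you explicitly leave open is not a technical refinement to be deferred; it is the crux of the theorem, and the route you propose through it would fail. If one insists, as you do, on the intertwining $W_\theta M=\varphi\cdot W_\theta$ with the $\theta$-independent symbol $\varphi(t)=\overline{\lambda}e^{it}+1-\overline{\lambda}$, then setting $E=W_\theta M-M_\varphi W_\theta$ one gets $W_\theta M^n-M_\varphi^nW_\theta=\sum_{k=0}^{n-1}M_\varphi^kEM^{n-1-k}$. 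For fixed $\theta$ the accumulated error is at least of order $n\|E\|$ with $\|E\|>0$ fixed, and it grows geometrically in $n$ when $\|\varphi\|_\infty=|\lambda|+|1-\lambda|>1$, which by Lemma~\ref{uni} is exactly the regime in which your set $A$ is nonempty. Since the dichotomy needs $n\to\infty$ at fixed $\theta$ before $\theta\to1$, the error swamps the main term precisely where you need the lower bound $\|W_\theta y_n\|\gtrsim(1+\delta)^n$; the two limits cannot simply be interchanged, and you offer no mechanism for the required uniformity. Your guess that the dual functions $g=g_1+g_2$ repair this is also not the paper's mechanism: those functions are used only to prove the uniform bound (\ref{jspeq403}), i.e.\ $\|W_\theta\|\le K$, which you already invoke as a black box.

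What the paper actually does is remove the defect rather than control it. The weights $\theta^{|n|}$ in the definition of $W_\theta$ are chosen so that $W_\theta(1-\tfrac{2}{q}C_\infty^\ast)=\Gamma W_\theta$ holds \emph{exactly}, where $\Gamma$ is the weighted bilateral shift $\Gamma e^{int}=\mu_ne^{i(n+1)t}$ with $\mu_n=\theta$ for $n\ge0$ and $\mu_n=1/\theta$ for $n<0$; consequently $W_\theta M=\bigl[(1-\overline{\lambda})I+\overline{\lambda}\Gamma\bigr]W_\theta$ with no error term at all. The price is that $(1-\overline{\lambda})I+\overline{\lambda}\Gamma$ is not a multiplication operator but carries two symbols, $(1-\overline{\lambda})+\overline{\lambda}\theta e^{it}$ on nonnegative frequencies and $(1-\overline{\lambda})+\overline{\lambda}\theta^{-1}e^{it}$ on negative ones, and the dichotomy of Theorem~\ref{thm4.3} is run, for each fixed $\theta$, with the sets $A_\gamma$, $B_\gamma$ defined from these modified symbols. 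This shows, without any interchange of limits, that $f_\theta=W_\theta y_0$ vanishes a.e.\ off the set where $|(1-\overline{\lambda})+\overline{\lambda}\theta e^{it}|\le1\le|(1-\overline{\lambda})+\overline{\lambda}\theta^{-1}e^{it}|$. Only now does $\theta\uparrow1$ enter: for each fixed $t$ the two cosine bounds defining that exceptional set coalesce, so $f_\theta(t)=0$ for all $\theta$ close to $1$; since $\theta\mapsto\sum_nc_n\theta^{|n|}e^{int}$ is analytic in $\theta$, every coefficient $c_{-n}e^{-int}+c_ne^{int}$ vanishes, whence $c_n=0$ for all $n$, $y_0=0$, and the recursion kills all $y_n$. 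If you replace your approximate intertwining by this exact one, the rest of your outline goes through essentially as you wrote it.
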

 \begin{proof}
 Let $q$ be the exponent conjugate to $p$, i.e., $1/p+1/q=1$. Since $C_\infty$ acts on $L^p(0,\infty)$, its conjugate operator $C_\infty^*$ is a bounded operator acting on $L^q(0,\infty)$. Let $\{e_n\}_{n\in\mathbb{Z}}$ be set of functions inf $L^q(0,\infty)$ as defined above, let $\theta\in (0,1)$, and let $W=W_\theta$ be as in Proposition~\ref{prop7.3}.

 Next, let $M_z$ be the operator of multiplication by $e^{it}$ on $L^q(0,2\pi)$, and let $\Gamma$ be a weighted shift on $L^q(0,2\pi)$ with weight sequence $\{\mu_n\}$, i.e., 
 $$
 \Gamma e^{int}=\mu_n e^{i(n+1)t}, \mbox{ with } \mu_n=\begin{cases} \theta, &\text{if $n\ge 0$},\\ 1/\theta & \text{if $n<0$,}\end{cases} = \frac{\theta^{|n+1|}}{\theta^{|n|}}.
 $$
 Then
 \begin{align*}
 \left(1-\frac2{q}C_\infty^*\right)e_n & =W e_{n+1} = \frac{\theta^{|n+1|}}{(1-\theta)^{\max\{1/p,1/q\}}}e^{i(n+1)t} \\
 & =\frac{\mu_n \theta^{|n|}}{(1-\theta)^{\max\{1/p,1/q\}}} M_z e^{int} \\
 & = \Gamma We_n
 \end{align*}
 so $W(1-2/q\,C_\infty^*)=\Gamma W$. Further if $C_\infty X=\lambda XC_\infty$ then $X^*C_\infty^*=\overline{\lambda} C_\infty^*X^*$, so we have
 $$
 X^*\left(1-\frac{2}{q}C_\infty^*\right) =\left(1-\frac{2}{q}\overline{\lambda}C_\infty^*\right)X^*.
 $$
 This implies that $(1-2/q\overline{\lambda}C_\infty^*)X^* e_n = X^* (1-2/qC_\infty^*) e_n = X^* e_{n+1}$ and, inductively, that 
 \begin{equation}
 \label{jspeq501}
 X^*e_n = (1-\frac{2}{q}\overline{\lambda}C_\infty^*)^nX^* e_0,
 \end{equation} 
 for all $n\in\mathbb{Z}$. Notice that
 \begin{align*}
 W \left(1-\frac2{q}\overline{\lambda}C_\infty^*\right) & = W \left(1-\overline{\lambda}+\overline{\lambda}- \frac{2}{q}\overline{\lambda}C_\infty^*\right)\\
 & = (1-\overline{\lambda})W + \overline{\lambda} W \left(1-\frac{2}{q}C_\infty^*\right) = UW,
 \end{align*}
 where $U=1-\overline{\lambda}+\overline{\lambda}\Gamma$. By the definition of $\Gamma$, we have
 \begin{align*}
 Ue^{int} & = \left[(1-\overline{\lambda})+\overline{\lambda}\theta e^{it}\right]e^{int}, \mbox{ if }n\ge 0, \mbox{ and }\\
 Ue^{int} & = \left[(1-\overline{\lambda})+\overline{\lambda}\frac1{\theta} e^{it}\right]e^{int}, \mbox{ if }n< 0.
 \end{align*}
 The estimates established in Proposition~\ref{propjsp101} allow us to obtain an estimate on the operator norm $\|X^*\|$. We have
 \begin{align*}
 \|X^*\| & \ge \frac{\|X^*e_n\|}{\|e_n\|} \ge \frac{1}{K}\gamma^n \|X^* e_n\|, \mbox{ if $n\ge 0$, and }\\
 \|X^*\| & \ge \frac{\|X^*e_n\|}{\|e_n\|} \ge \frac{1}{K}\gamma^{-n} \|X^* e_n\|, \mbox{ if $n< 0$.}
 \end{align*}
 As for $\|X^*e_n\|$ we have 
 \begin{align*}
 \|X^*e_n\| &=\| \left(1-\frac{2}{q}\overline{\lambda}C_\infty^*\right)^n X^* e_0\| \\
 &\ge \frac{1}{\|W\|}\|W\left(1-\frac{2}{q}\overline{\lambda}C_\infty^*\right)^n X^* e_0\| \\
 &= \frac{1}{\|W\|} \|U^n WX^* e_0\|\\& = \frac{1}{\|W\|} \|U^n f\|
 \end{align*}
 where $f=WX^* e_0$. Combining with the previous estimates, we obtain that
 $$
 \|X^*\| \ge \frac{1}{K}\gamma^n \frac{1}{\|W\|} \|U^n f\|, \mbox{ if }
 n\ge 0,
 $$
 and
 $$
 \|X^*\| \ge \frac{1}{K\gamma^n} \frac{1}{\|W\|} \|U^n f\|, \mbox{ if }n<0.
 $$ 
 Let 
 \begin{align*}
 A_\gamma & =\{t\in [0,2\pi]: |\gamma(1-\overline{\lambda})+\overline{\lambda}\gamma\theta e^{it}|>1\},\\
 B_\gamma & =\{t\in [0,2\pi]: |\gamma^{-1}(1-\overline{\lambda})+\overline{\lambda}\gamma^{-1}/\theta e^{it}|<1\}.
 \end{align*} 
 Using the same argument as in the proof of Theorem~\ref{thm4.3}, we see that $f$ must be 0 on $A_\gamma\cup B_\gamma$. Since this must be true for any $\gamma\in (0,1)$, we see that $f$ must vanish on $A=\cup_{\gamma\in (0,1)}A_\gamma$ and $B=\cup_{\gamma\in (0,1)}B_\gamma$. Thus, $f$ can be different from 0 only on the complement of $A\cup B$. But,
 $$
 (A\cup B)^c = \{t\in [0,2\pi]: |(1-\overline{\lambda})+\overline{\lambda}\theta e^{it}|\le 1 \mbox{ and } |(1-\overline{\lambda})+\frac{\overline{\lambda}}{\theta} e^{it}|\ge 1\}.
 $$
 Let $re^{i\varphi}$ be the polar form of $(1-\overline{\lambda})/\overline{\lambda}$. Since we are assuming that $\lambda\ne 1$, this complex number is not zero, so $\varphi$ is well defined.  Then
 \begin{align*}
&  (A\cup B)^c \\
& = \{t\in [0,2\pi]:
 |r+\theta e^{i(t-\varphi)}|\le \frac1{|\lambda|}\mbox{ and } 
 |r+\frac{1}{\theta} e^{i(t-\varphi)}|\ge \frac1{|\lambda|}\}\\
 &= \{t\in [0,2\pi]: r^2+\theta^2 +2r\theta \cos(t-\varphi) \le \frac1{|\lambda|^2}\}  \cap \\
& \cap \{ t \in [0,2\pi] \colon  r^2+\frac{1}{\theta^2}+2\frac{r}{\theta}\cos(t-\varphi)\ge \frac1{|\lambda|^2}\} \\
 &= \{t\in [0,2\pi]: \frac{\theta}{2r} \left(\frac{1}{|\lambda|^2}-r^2-\frac{1}{\theta^2}\right)\le \cos (t-\varphi)\le 
 \frac{1}{2r\theta} \left(\frac{1}{|\lambda|^2}-r^2-{\theta^2}\right)\}.
 \end{align*}
Notice that, as $\theta\uparrow 1$, both bounds for $\cos (t-\varphi)$ converge to the same number. It follows that, for a fixed $t\in [0,2\pi]$ there exists $\Theta\in (0,1)$ such that, if $\theta\ge \Theta$ then $t\notin (A\cup B)^c$. In other words, if $\theta\ge \Theta$ then $f(t)=0$. 
 
 Let us write $X^*e_0 = \sum_{n\in\mathbb{Z}} c_ne_n$. Then 
 $$
 f(t)= (WX^* e_0) (t)=\sum_{n=-\infty}^\infty c_k \frac{\theta^{|n|}}{(1-\theta)^{\max\{1/p,1/q\}}}e^{int}.
 $$
 For a fixed $t\in [0,2\pi]$ the power series above is an analytic function of $\theta$, for $|\theta|<1$, and this function vanishes on the line segment $(\Theta,1)$, so it must be zero. Consequently, $c_{-n} e^{-int} + c_n e^{int} = 0$  for every $n\in\mathbb{N}$. Since this is true for all $t\in (A\cup B)^c$, it is easy to see that $c_n=0$ for all $n\in\mathbb{Z}$. Thus $X^*e_0=0$ and (\ref{jspeq501}) implies that $X^*e_n=0$ for all $n\in\mathbb{Z}$, whence $X=0$.
 \end{proof}

 \section{Some open problems}
 \label{problems}
 Here is a list of problems that we find interesting and that  we have not been able to solve.
\begin{enumerate}
\item Show that the co-analytic Toeplitz matrix \(A\) of Theorem \ref{main} induces a bounded linear operator on \(\ell^2,\) or in other words, show that the supremum in equation (\ref{sup})  is finite.
\item Show that if \(X\) is an extended eigenoperator for \(C_1\) on \(L^p[0,1]\) then there exists \(R \in \{C_1\}^\prime\) such that \(X=X_0R,\) where \(X_0\) is the weighted composition operator of Lemma \ref{thm:suffic}.
\item Show that if \(1<p<\infty\) and if \(\lambda\) is real and \(\lambda \geq 1\) then \(\lambda\) is an extended eigenvalue for \(C_0\) on \(\ell^p.\)
\item Let \(T \in {\mathcal B}(E)\) and consider the {\em Deddens algebra} \(\mathcal D_T\)  associated with  \(T,\)  that is, the family of all \(X \in {\mathcal B}(E)\) for which there is a constant \(M>0\) such that for every \(n \in \mathbb N\) and for every \(f \in E,\) 
\begin{eqnarray}
\|T^n X f \| \leq M \|T^nf\|.
\end{eqnarray}
When \(T\) is invertible this is equivalent to saying that 
\begin{eqnarray}
\label{eqn:deddens}
\sup_{n \in \mathbb N} \|T^n X T^{-n} \| <\infty.
\end{eqnarray}
The Deddens algebra \(\mathcal D_T\)  is a not necesarily closed subalgebra of \(\mathcal B(E)\) that contains all extended eigenoperators corresponding to extended eigenvalues \(\lambda\) with \(|\lambda| \leq 1.\) Show that \(\mathcal D_{C_\infty} = \{C_\infty\}^\prime.\) A consequence of this result would be that the set of extended eigenvalues for \(C_\infty\) reduces to \( \{1\}.\)
\end{enumerate}

\begin{acknowledgement}
The first author was partially supported  by  Ministerio de Econom\'{\i}a y Competitividad, Reino de Espa\~na,  under grant   MTM  2012-30748. The second author was partially supported by Junta de Andaluc\'{\i}a under grant FQM-257 and  by Vicerrectorado de investigaci\'on UCA. We are grateful to Giorgio Metafune and Luis Rodr\'{\i}guez-Piazza for useful input.
\end{acknowledgement}

\section*{References}
\bibliography{cesaro.bib}
\bibliographystyle{plain}

 \end{document}